\documentclass[11pt,reqno]{amsart}
\usepackage{amssymb}
\usepackage{enumerate}
\usepackage{amsmath,amssymb,amsfonts,amsthm,
	latexsym, amscd, epsfig, epic,color}
\usepackage{epstopdf}
\usepackage{mathtools}
\usepackage{youngtab}
\usepackage{extarrows}
\usepackage{enumitem}
\usepackage{enumerate}
\usepackage{mathrsfs}
\usepackage{eucal}
\usepackage{eufrak}
\usepackage{xspace}
\usepackage{a4wide}
\usepackage{colordvi}
\usepackage{comment}
\usepackage{hyperref,eucal}
\usepackage{tabularx}
\usepackage{tikz}\usetikzlibrary{matrix,arrows}
\usepackage{ytableau} \usepackage{chemfig}
\usepackage{float}
\usepackage{youngtab}
\usepackage{mathrsfs}
\usepackage{caption}
\usepackage{epsfig}
\usepackage{stfloats}
\usepackage{booktabs}
\usepackage{changepage}
\usepackage{graphicx}
\usepackage{xcolor}
\usepackage{pict2e}
\theoremstyle{plain}

\theoremstyle{plain}

\newtheorem{theorem}{Theorem}[section]
\newtheorem{corollary}{Corollary}[section]

\newtheorem{lemma}{Lemma}[section]
\newtheorem{proposition}{Proposition}[section]
\newtheorem{conjecture}{Conjecture}[section]
\newtheorem{definition}{Definition}[section]

\theoremstyle{example}
\newtheorem{example}{Example}[section]

\theoremstyle{openproblem}

\def\E{\CMcal{E}}

\def\G{\CMcal{G}}

\def\A{\mathcal{A}}

\def\S{\CMcal{S}}
\def\H{\CMcal{H}}
\def\L{\CMcal{L}}
\def\GL{\mathrm{GL}}
\def\inv{\mathrm{inv}}

\def\r{\mathbf{r}}
\def\bnu{\boldsymbol{\nu}}
\def\maj{\mathrm{maj}}

\def\dg{\mathrm{dg}}

\def\arm{\mathrm{arm}}
\def\leg{\mathrm{leg}}
\def\asc{\mathrm{asc}}

\def\col{\mathrm{col}}
\def\row{\mathrm{row}}
\def\SSYT{\mathrm{SSYT}}
\def\Exp{\mathrm{Exp}}
\def\bT{\boldsymbol{T}}

\def\SYT{\mathrm{SYT}}
\newcommand{\NN}{\mathbb{N}}

\newcommand{\coinv}{\operatorname{coinv}}

\makeatletter
\@namedef{subjclassname@2020}{\textup{2020} Mathematics Subject Classification}
\makeatother

\theoremstyle{remark}

\newtheorem{remark}{Remark}
\numberwithin{equation}{section}
\numberwithin{figure}{section}

\title[Positivity for a $t$-analogue of refined Littlewood-Richardson rules]{Positivity for a $t$-analogue of refined Littlewood-Richardson rules}

\author{Jiaming Du}
\address{School of Mathematical Sciences, Xiamen University, Xiamen 361005, China}
\email{jiamingduu@outlook.com}

\author{Emma Yu Jin}
\address{School of Mathematical Sciences, Xiamen University, Xiamen 361005, China}
\email{yjin@xmu.edu.cn}

\author{Xiaowei Lin}
\address{School of Mathematical Sciences, Xiamen University, Xiamen 361005, China}
\email{linxiaoweiqing@126.com}

\subjclass[2020]{Primary 05E05; Secondary 05E18, 16T30}

\keywords{Hall-Littlewood functions, $t$-Demazure atoms, positivity, Littlewood-Richardson rule, LLT polynomials}
\date{\today}

\begin{document}
	
	\begin{abstract}
		In 2010, Haglund, Luoto, Mason and van Willigenburg proved that the product of a Schur function $s_{\lambda}$ and a Demazure atom $\A_{\gamma}$ expands positively in terms of Demazure atoms, and interpreted the coefficients as the number of Littlewood-Richardson skew skyline tableaux of a given shape and a filling type. Extending this approach, they further established the positive expansion for the product of a Schur function $s_{\lambda}$ and a quasisymmetric Schur function $\S_{\alpha}$, or of a Schur function $s_{\lambda}$ and a Demazure character $\kappa_{\gamma}$. In particular, the latter provides a refinement of the classical Littlewood-Richardson rule.
		
		We present a natural $t$-analogue of these positive expansions by deriving the expansion for a product of a Hall-Littlewood polynomial $P_{\lambda}(t)$ and a $t$-Demazure atom $\E_{\gamma}(t)$ in terms of $t$-Demazure atoms. The corresponding coefficients are $\mathbb{N}\,[t,1-t]$-positive; that is, they are polynomials with nonnegative integer coefficients in $t$ and $1-t$, and they are interpreted in terms of standard Young tableaux. We take the approach of connecting the multiplication coefficients with the LLT polynomials, which differs from the combinatorial insertion arguments of Haglund, Luoto, Mason and van Willigenburg.
		
	\end{abstract}
	\maketitle
	
	\section{Introduction and main results}
	
	The Littlewood-Richardson (LR) rule expresses the product of two Schur functions as
	\begin{align}\label{E:LR1}
		s_{\mu}s_{\nu}&=\sum_{\lambda}c_{\mu\nu}^{\lambda}s_{\lambda},
	\end{align}
	where the coefficients $c_{\mu\nu}^{\lambda}$ are nonnegative integers that arise naturally in representation theory, combinatorics and algebraic geometry, etc; see, for example, \cite{Fulton:97,FH:91,Sagan:00,Stanley,Stembridge:02}. Over the years, several combinatorial models have been developed to interpret the coefficients, including Young tableaux, honeycomb configurations and puzzles \cite{Fulton:97,KnuTao:99,KTW:04}. 
	Refinements of the Schur basis lead to corresponding questions about the positivity of multiplication in nonsymmetric and quasisymmetric bases.

	Haglund, Luoto, Mason and van Willigenburg extended the LR rule (\ref{E:LR1}) to the product of a Schur function $s_{\lambda}$ and a Demazure character $\kappa_{\gamma}$ (a non-symmetric generalization of Schur functions), proving that this product expands positively in terms of Demazure characters. More precisely,
	\begin{theorem}\cite[Theorem 6.1 and Section 6.1]{HLMvW10}
		For any weak composition $\gamma=(\gamma_1,\ldots,\gamma_n)\in \mathbb{N}^{n}$ and a partition $\lambda$  with $\ell(\lambda)\leq n$,
		let $\gamma^r=(\gamma_n,\ldots,\gamma_1)$ denote the reverse of $\gamma$. Then
		\begin{align}\label{E:sk1}
			\kappa_{\gamma}s_{\lambda}=\sum_{\eta}d_{\gamma\lambda}^{\eta}\kappa_{\eta},
		\end{align}
		with the coefficient $d_{\gamma\lambda}^{\eta}$ denoting the number of Littlewood-Richardson skew keys of shape $\eta^r/\gamma^r$ with filling type $\lambda^r$. If $\gamma^r=(\gamma_n\ge \cdots\ge \gamma_1\ge 0)=\mu$ is a partition, then $\kappa_{\gamma}=s_{\mu}$ and (\ref{E:sk1}) reduces to (\ref{E:LR1}).
	\end{theorem}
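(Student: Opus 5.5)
The plan is to deduce the Demazure-character statement from the corresponding expansion of $s_{\lambda}\A_{\gamma}$ into Demazure atoms, and then group atoms into characters. Throughout I use the standard fact that every Demazure character is a positive sum of atoms,
\[
\kappa_{\gamma}=\sum_{\beta}\A_{\beta},
\]
where $\beta$ ranges over the weak compositions with $\beta^+=\gamma^+$ (the same sorted partition) and $\beta\le\gamma$ in Bruhat order on the $S_n$-orbit of $\gamma$. Combinatorially, $\A_\beta$ is the generating function of semistandard skyline fillings of shape $\beta$, and $\kappa_\gamma$ is the generating function of the fillings whose right key is at most the key of $\gamma$.

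\textbf{Step 1 (atom rule).} I would first establish
\[
\A_{\gamma}s_{\lambda}=\sum_{\eta} a^{\eta}_{\gamma\lambda}\A_{\eta},
\]
with $a^{\eta}_{\gamma\lambda}$ the number of LR skew skyline tableaux of shape $\eta/\gamma$ and filling type $\lambda$.
\begin{enumerate}[label=(\roman*)]
\item Write $s_\lambda$ as a sum over reverse SSYT of shape $\lambda$.
\item Insert the column reading word of such a tableau letter by letter into a skyline filling of shape $\gamma$, using Mason's skyline insertion.
\item Show the insertion is a weight-preserving bijection from pairs (filling of shape $\gamma$, word) to pairs (filling of shape $\eta\supseteq\gamma$, recording skew skyline tableau of shape $\eta/\gamma$).
\item Show the recording tableau is LR exactly when the word comes from a highest-weight tableau, via the analogue of Knuth-equivalence invariance for skyline insertion.
\end{enumerate}
This gives the atom rule.

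\textbf{Step 2 (summing over the orbit).} Sum the atom rule over all $\beta\le\gamma$ in the orbit to get
\[
\kappa_\gamma s_\lambda=\sum_{\beta\le\gamma}\sum_\eta a^\eta_{\beta\lambda}\A_\eta .
\]
It then remains to show that this atom expansion regroups into whole Demazure characters. For this I would use the crystal or key characterization:
\begin{enumerate}[label=(\roman*)]
\item The set of fillings counted by $\kappa_\gamma$ is a Demazure crystal.
\item Tensoring a Demazure crystal with a highest-weight crystal $B(\lambda)$ decomposes into a disjoint union of Demazure crystals (Joseph, Lakshmibai–Littelmann–Magyar).
\item Each component is indexed by a highest-weight element, i.e.\ by an LR datum, which pins down $\eta$.
\end{enumerate}
Combinatorially, the LR skew keys of shape $\eta^r/\gamma^r$ should be exactly the recording tableaux whose insertion output has right key equal to the key of $\eta$. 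The reversal $r$ enters because HLMvW's skyline conventions index keys in reversed order.

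\textbf{Step 3 (specialization).} When $\gamma^r$ is a partition $\mu$, $\gamma$ is the antidominant arrangement in its orbit. Then every $\beta$ in the orbit satisfies $\beta\le\gamma$, so $\kappa_\gamma=s_\mu$. LR skew keys then have no key constraint and become ordinary LR tableaux, recovering (\ref{E:LR1}).

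The main obstacle is Step 2: proving that the key (or right-key) condition interacts correctly with insertion, so that the atom expansion collapses into characters with coefficient exactly the number of LR skew keys. I would first try a direct argument that skyline insertion commutes with the Demazure operators $\pi_i$ on the $\gamma$ side. The alternative is to invoke the Demazure-crystal tensor decomposition and identify its highest-weight elements with LR skew keys.
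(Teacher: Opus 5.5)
Your Step 2 carries the whole content of the statement, and it is not carried out. Summing the atom rule over $\beta\le\gamma$ only gives the coefficient $\sum_{\beta\le\gamma}a^{\eta}_{\beta\lambda}$ of $\A_\eta$ in $\kappa_\gamma s_\lambda$. To extract $d^{\eta}_{\gamma\lambda}$ from this you would need the identity $\sum_{\beta\le\gamma}a^{\eta}_{\beta\lambda}=\sum_{\eta'\ge\eta}d^{\eta'}_{\gamma\lambda}$ (Bruhat order on the orbit of $\eta$). That is a correspondence between LR skew skyline tableaux of the various shapes $\eta/\beta$ and LR skew keys of the reversed shapes $\eta'^r/\gamma^r$ with the increasing basement, and the proposal does not supply it. Since the M\"obius function of Bruhat order is signed, regrouping does not even give positivity.

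The crystal fallback (Joseph, Lakshmibai--Littelmann--Magyar) does give key-positivity of $\kappa_\gamma s_\lambda$. However, item (iii) hides the whole difficulty. The highest weight element of a Demazure component has weight $\lambda(\eta)$, and to read off $\eta$ itself you must locate the extremal (key) element of that component. Showing that the components with a given $\eta$ are counted by LR skew keys of shape $\eta^r/\gamma^r$ and type $\lambda^r$ is exactly the theorem, which you only assert as ``should be''.

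Your other suggestion, pushing the atom rule through Demazure operators at $t=0$, degenerates. With $\bar\pi_i=\pi_i-1$ one has $\pi_i\bar\pi_i=0$, and, e.g., $\pi_1\bar\pi_2=\pi_1\pi_2-\pi_1$. So images of atoms have signed key expansions, and $\pi_{w_0}\A_\beta=0$ unless $\beta$ is dominant.

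The paper's mechanism does this transfer at generic $t$, where it does not degenerate. Since $\Pi_i=T_i+1-t=tT_i^{-1}$, one gets $\Pi_{w_0}\E_\gamma(t)=t^{\inv(\gamma^r)}K_{\gamma^r}(t)$, as in \eqref{eq:transfer}. Moreover $\Pi_{w_0}$ commutes with multiplication by the symmetric $P_\lambda(t)$. Applying it to the $t$-atom expansion gives $d^{\eta}_{\gamma\lambda}(t)=t^{\inv(\eta)-\inv(\gamma)}a^{\eta^r}_{\gamma^r\lambda}(t)$. This is a single atom coefficient of the reversed shapes, which is where the reversal really comes from, and no inversion over Bruhat order is needed.

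Because of the power shift, $d(0)$ is not determined by $a(0)$ alone. One needs the full formula \eqref{E:productEP}, whose summands are $t^{\inv^*(T)+h-m}$ times polynomials with constant term $1$. Hence $d^{\eta^r}_{\gamma^r\lambda}(0)$ counts the $T\in\SYT(\lambda';\eta,\gamma)$ with $\inv^*(T)+h-m+\inv(\eta^r)-\inv(\gamma^r)=0$. Summing \eqref{eq:final} over $i<j$ identifies this exponent with the number of coinversion triples of $\psi(T)$ under the increasing basement, so $\psi$ maps these $T$ bijectively onto LR skew keys.

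The $t$-atom rule itself is obtained there from vertical-strip LLT polynomials and their $q$-Whittaker expansion, not from skyline insertion. Your Step 1 (the Haglund--Luoto--Mason--van Willigenburg insertion route) is fine as a citation for the $t=0$ atom rule, but that alone cannot feed this transfer, which needs the $t$-dependence.

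Minor: in this paper's notation the orbit condition is $\lambda(\beta)=\lambda(\gamma)$, not $\beta^+=\gamma^+$, and the filling type is $\lambda^r$.
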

	Since every Schur function is a special Demazure character, specifically $s_{\mu}=\kappa_{\mu^r}$, Equation (\ref{E:sk1}) yields (\ref{E:LR1}) upon observing that
	\begin{align*}
		\sum_{\eta}d_{\mu^r\lambda}^{\eta}\kappa_{\eta}=\sum_{\nu}c_{\mu\lambda}^{\nu}s_{\nu};
	\end{align*}
	for further details, see \cite[Section 6.1]{HLMvW10}. In the proof of Equation (\ref{E:sk1}), Haglund, Luoto, Mason and van Willigenburg developed combinatorial rules for the product of a Schur function $s_{\lambda}$ and a Demazure atom $\A_{\gamma}$, demonstrating its positive expansion in terms of Demazure atoms:
	\begin{theorem}\label{T:Hthm4.1}\cite[Theorem 4.1]{HLMvW10}
		For any weak composition $\gamma\in \mathbb{N}^{n}$ and a partition $\lambda$  with $\ell(\lambda)\leq n$, we have
		\begin{align}\label{E:as1}
			\A_{\gamma}s_{\lambda}=\sum_{\eta}a_{\gamma\lambda}^{\eta}\A_{\eta},
		\end{align}
		where the coefficient $a_{\gamma\lambda}^{\eta}$ counts the Littlewood-Richardson skew skyline tableaux of shape $\eta/\gamma$ with filling type $\lambda^r$.
	\end{theorem}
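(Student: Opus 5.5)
Theorem \ref{T:Hthm4.1} is the HLMvW rule, and I would prove it along the lines of its original proof: Demazure atoms are generating functions of semi-standard skyline fillings, and an insertion bijection shows that the product expands positively.

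\textbf{Step 1 (combinatorial model).} I start from the formula
\[
\A_{\gamma}=\sum_{F} x^{F},
\]
where $F$ runs over semi-standard skyline fillings of shape $\gamma$. These fillings have the basement $1,\dots,n$ (or the reversed convention) and satisfy the column, weakly decreasing row, and type A/B triple conditions. Similarly, $s_{\lambda}$ is the generating function of reverse semistandard Young tableaux $T$ of shape $\lambda$. The product $\A_{\gamma}s_{\lambda}$ is therefore the generating function of pairs $(F,T)$.

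\textbf{Step 2 (insertion).} Next I use Mason's skyline analogue of Schensted insertion $F\leftarrow k$. It inserts a letter into a skyline filling and adds exactly one cell. It is weight-preserving and invertible once the row in which the new cell appears is known. I read $T$ in a fixed reading word, column by column from the rightmost column in reverse, say $w=w_1w_2\cdots w_m$. I then insert successively, $F\leftarrow w_1\leftarrow\cdots\leftarrow w_m$, which produces a filling $G$ of some shape $\eta\supseteq\gamma$. I record the new cells by writing in cell $j$ the row index of $\lambda$ from which $w_j$ came. This gives a recording skew filling $R$ of shape $\eta/\gamma$ with content $\lambda^r$ (using the reversed convention).

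\textbf{Step 3 (characterising the recording fillings).} I must show two things. First, the $R$ that arise are exactly the LR skew skyline tableaux of shape $\eta/\gamma$ with filling type $\lambda^r$: the skyline column and triple conditions hold, and the column reading word is a lattice (reverse Yamanouchi) word. Second, the map $(F,T)\mapsto (G,R)$ is a bijection onto pairs of a semi-standard skyline filling $G$ of shape $\eta$ and such an $R$. For injectivity I use the invertibility of the insertion given the new-cell positions, which can be read from $R$. Surjectivity is the reverse direction: starting from $(G,R)$, I un-insert in the order dictated by $R$. I then check that the extracted letters form a column-strict reverse tableau of shape $\lambda$, and the lattice property of $R$ is exactly what guarantees this.

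Summing $x^{G}$ over $G$ for each fixed $R$ gives $\A_{\eta}$, which yields \eqref{E:as1}.

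The main obstacle is Step 3, namely the row-bumping lemmas. I need that inserting a weakly increasing (or column) sequence produces new cells moving strictly in a controlled direction, a skyline analogue of the row-bumping lemma. Only then do consecutive letters from the same row of $T$ land in positions compatible with the triple conditions of $R$. I would first prove a two-letter bumping lemma: for $a\le b$ inserted consecutively, compare the insertion paths and show the second new cell lies weakly right and strictly above in the skyline order. This lemma is enough to derive both the row/column conditions and the lattice condition on $R$ by induction.
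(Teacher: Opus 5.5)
\textbf{Comparison with the paper.} Your route, Mason's skyline insertion of a reading word of $T$ into $F$ with a recording filling of $\dg(\eta/\gamma)$, is the original Haglund--Luoto--Mason--van Willigenburg strategy. It is genuinely different from the paper's proof, which never inserts anything. There, $a_{\gamma\lambda}^{\eta}=a_{\gamma\lambda}^{\eta}(0)$ is computed algebraically, via the Cauchy identity, Proposition~\ref{cor:thmB1}, $\omega$-duality and the $q$-Whittaker expansion of vertical-strip LLT polynomials (Theorem~\ref{thm:positive}), as $\#\{T\in\SYT(\lambda';\eta,\gamma)\,\vert\,\inv^*(T)+h=m\}$. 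These tableaux are then matched with LRS tableaux by the explicit relabelling $\psi$, $w_k=l+1-\col_T(k)$. The contre-lattice property is just standardness of $T$, weak decrease along rows comes from Lemma~\ref{L:pq}, and the inversion-triple conditions are the vanishing of the count \eqref{eq:zero-stat}. So in the paper, the fact that the coefficient of $\A_\eta$ does not depend on the large filling is supplied by symmetric-function identities, and only a bijection between two finite sets of small objects is needed. Completed, your approach would give a monomial-level bijection at $t=0$ with no external input, but it does not deform in $t$.

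\textbf{The gap: Step~3.} In your proposal that independence is exactly what Step~3 must prove, and a two-letter bumping lemma is not enough for it. First, a lemma about two letters inserted consecutively only controls consecutive insertions. In a column reading word, letters from the same row of $T$ are in general not consecutive. Moreover, the contre-lattice condition on $R$ compares the cells created by rows $i$ and $i+1$ of $T$, which are separated by many other insertions.

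Second, your surjectivity claim is the whole content of the theorem. It says that for every LRS tableau $R$ of shape $\eta/\gamma$ and every semi-standard skyline filling $G$ of shape $\eta$, un-inserting returns a reverse tableau of shape $\lambda$; equivalently, the image of $(F,T)\mapsto(G,R)$ is a product set. This does not follow by induction from a two-letter statement. Already for ordinary Schensted insertion this step needs a global argument: the skew recording tableau rectifies to the recording tableau of the reading word (Knuth equivalence or jeu de taquin).

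Third, in that classical setting the labels producing LR tableaux are the rows of the insertion times in that recording tableau, not the rows of $T$ containing the inserted letters. For the reading word $b\,a\,c$ of a shape-$(2,1)$ tableau these are $1,2,1$ versus $2,1,1$ (or $1,2,2$ after reversal). So you must fix the reading word and labelling precisely and check them, e.g.\ on $\gamma=(0,0)$ with $\lambda=(1,1)$ and $\lambda=(2,1)$, where the LRS tableaux are forced.

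Fourth, your plan never addresses the type $A$/$B$ triple conditions of $R$ against the extended basement $\dg(\gamma)$. These are exactly what distinguish the $\A_\eta$-expansion from a Schur expansion.

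One way to close the second and third points is to transport the classical Littlewood--Richardson correspondence through Mason's compatibility of skyline insertion with reverse row insertion under column sorting, and then prove the triple conditions separately. As written, Step~3 restates the theorem rather than proving it.
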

	
	This paper is motivated by the observation that the $t$-Demazure atom (also called the non-symmetric Hall-Littlewood polynomial) $\E_{\gamma}(t)$ and the Hall-Littlewood polynomial $P_{\lambda}(t)$ are natural $t$-analogues of the Demazure atom $\A_{\gamma}=\E_{\gamma}(0)$ and the Schur function $s_{\lambda}=P_{\lambda}(0)$, respectively.
	
	The objective of this paper is to establish
	the $\mathbb{N}[t,1-t]$-{\em positive} expansion for the product $\E_{\gamma}(t)P_{\lambda}(t)$ in terms of $t$-Demazure atoms $\E_{\eta}(t)$, meaning that every coefficient in this expansion is a non-negative integer linear combination of monomials $t^a(1-t)^b$,  with $a,b\in\mathbb{N}$. This gives a natural $t$-analogue of (\ref{E:as1}). More formally,
	\begin{theorem}\label{T:main}
		For a weak composition $\gamma\in \mathbb{N}^n$ and a partition $\lambda$  with $\ell(\lambda)\leq n$, in the expansion
		\begin{align}\label{E:defa}
			\E_{\gamma}(t)P_{\lambda}(t)=\sum_{\eta\ge \gamma}a_{\gamma\lambda}^{\eta}(t)\E_{\eta}(t),
		\end{align}
		the coefficient $a_{\gamma\lambda}^{\eta}(t)\in\mathbb{N}[t,1-t]$ is given by
		\begin{align}\label{E:productEP}
			a_{\gamma\lambda}^{\eta}(t)=\sum_{T\in\SYT(\lambda';\eta,\gamma)}
			t^{\inv^*(T)+h-m}(1-t)^{\beta^*(T)}
			\prod_{j\in \L(T)- \alpha^*(T)}[\arm^*_{j}(T)+1]_t,
		\end{align}
		where $h=h(\gamma,\eta)$ and $m=m(\gamma,\eta)$ are given in Definition \ref{def:inv-attack} and the other statistics in Definition \ref{Def:etagamma}.
		In particular, when $t=0$, we obtain (\ref{E:as1}).
	\end{theorem}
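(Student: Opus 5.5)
Following the abstract, the plan is to compute the coefficients $a_{\gamma\lambda}^{\eta}(t)$ through LLT polynomials rather than by insertion. The first step is to express $\E_{\gamma}(t)$ as a generating function of nonattacking fillings of the skyline diagram of $\gamma$, weighted by $t^{\inv}$ and $(1-t)$ factors. This is the $q=0$ specialization of the Haglund--Haiman--Loehr formula for nonsymmetric Macdonald polynomials.

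The second step is to use the $q=0$ specialization of the ``column'' formula to write $P_{\lambda}(t)$ as a sum over fillings of $\lambda'$. Equivalently, we write $P_\lambda(t)$ as a combination of LLT polynomials indexed by tuples of single columns. Multiplying the two expressions, the product $\E_{\gamma}(t)P_{\lambda}(t)$ becomes a sum over fillings of a combined diagram. This diagram is the skyline of $\gamma$ together with the columns of $\lambda'$ placed as extra cells.

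The key idea is then a ``triangularity'' argument. We group the fillings of the combined diagram according to the composition $\eta$ obtained by sliding the column cells onto the skyline of $\gamma$. This should recover $\E_\eta(t)$ times a coefficient depending only on how the cells of $\lambda'$ are distributed among the rows of $\eta/\gamma$.

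Concretely, the procedure runs as follows.
\begin{enumerate}
\item Fix the positions: each filling contributing to $\E_\gamma P_\lambda$ determines a standard Young tableau $T$ of shape $\lambda'$. Here $T$ records the order in which the cells of $\eta/\gamma$ are filled, giving the set $\SYT(\lambda';\eta,\gamma)$.
\item Show that the sum over remaining data with $T$ fixed factors as $\E_\eta(t)$ times a monomial-type weight. The statistics $\inv^*(T)$, $\beta^*(T)$ and $\arm^*_j(T)$ of Definition \ref{Def:etagamma} record, respectively, inversion triples created between new and old cells, cells whose descent produces a $(1-t)$ factor, and free choices among equal-height rows.
\item The $t$-integers $[\arm^*_j(T)+1]_t$ come from summing over permutations of rows of equal length in $\eta$ that the standardization cannot distinguish. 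The normalization $t^{h-m}$ with $h=h(\gamma,\eta)$, $m=m(\gamma,\eta)$ of Definition \ref{def:inv-attack} corrects the count of attacking pairs between the two diagrams.
\end{enumerate}
Finally, setting $t=0$ kills every term with $\inv^*+h-m>0$ and sends each $[k]_t$ to $1$. One then checks that the surviving tableaux biject with LR skew skyline tableaux of shape $\eta/\gamma$ and content $\lambda^r$, recovering Theorem \ref{T:Hthm4.1}.

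The main obstacle is step 2: proving that the weight of a combined filling splits as (weight of a filling of $\eta$)$\times$(a factor depending only on $T$). Inversion triples mixing old and new cells do not obviously depend only on $T$. I would first try to establish this through an LLT-type sign-reversing involution, or a $t$-commutation relation of the form $\E_\gamma(t)\cdot(\text{column LLT})=\sum(\cdots)\E_\eta(t)$ proved via Demazure--Lusztig operators $T_i$. Concretely, I would show the identity for a single column $\lambda=(1^k)$, where $P_{(1^k)}=e_k$, using the symmetry of $e_k$ under the operators $T_i$. I would then bootstrap to general $\lambda$ by expressing $P_\lambda(t)$ via products of $e_k$'s, with $t$-coefficients absorbed into the $[\arm^*+1]_t$ factors. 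Positivity in $\mathbb{N}[t,1-t]$ would then be manifest from the formula.
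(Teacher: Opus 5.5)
There is a genuine gap, and it sits exactly at the step you flag as the main obstacle. Nothing in the proposal shows that, once $T$ is fixed, the sum over combined fillings (skyline of $\gamma$ plus columns of $\lambda'$) collapses to $\E_\eta(t)$ times a weight depending only on $T$. You give no mechanism for this regrouping; already at $t=0$ it is the entire content of the Haglund--Luoto--Mason--van Willigenburg insertion argument. The roles you assign to the statistics are guesses that do not match the formula. $T$ does not record an order of filling: it records the letter $w_k=l+1-\col_T(k)$ placed in the $k$th ordinary box $v_k$ of $\dg(\eta/\gamma)$ in column-reading order. The factors $[\arm^*_j(T)+1]_t$ have nothing to do with permuting equal rows of $\eta$.

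Your fallback does not close the gap either. Suppose you had a Pieri rule for $\E_\gamma(t)e_k$; the theorem predicts coefficients $t^{h(\gamma,\eta)-m(\gamma,\eta)}$ supported on $\eta-\gamma\in\{0,1\}^n$, but you only announce it. Passing from products of $e_k$'s to $P_\lambda(t)$ still means inverting the Hall--Littlewood Pieri rule, which is signed: already $P_{(2)}(t)=e_1^2-(1+t)e_2$. So positivity would not be manifest. You would need a sign-reversing involution whose fixed points carry exactly the weights $t^{\inv^*+h-m}(1-t)^{\beta^*}\prod[\arm^*_j+1]_t$, and none is proposed.

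You also never show $\inv^*(T)+h-m\ge 0$. This is needed both for membership in $\mathbb{N}[t,1-t]$ and for the $t=0$ specialization. Finally, ``one then checks'' the bijection with LRS tableaux hides a real argument: Lemma~\ref{L:pq}, and the identity (\ref{eq:zero-stat}) expressing $\inv^*(T)+h-m$ as a number of triples with $\sigma(j,k+1)<\sigma(i,k)<\sigma(j,k)$.

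The missing idea is a duality that avoids multiplying generating functions altogether. By the Hall--Littlewood Cauchy identity, $\sum_\lambda a^\eta_{\gamma\lambda}(t)Q'_\lambda(X_n;t)$ is the coefficient of $\E_\eta(Y_n;t)$ in $\Exp[X_nY_n]\E_\gamma(Y_n;t)$. Reversing the $y$-variables and applying $\tau$ turns this into the coefficient of $F_{-\eta}(Y_n;t)$ in $\Exp[X(y_1^{-1}+\cdots+y_n^{-1})]F_{-\gamma}(Y_n;t)$. Proposition~\ref{cor:thmB1} identifies this with $t^{h(\bnu)}G_{\bnu}(X;t^{-1})$ for the horizontal-strip tuple $\bnu=(-\gamma)/(-\eta)$.

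Applying $\omega$ then makes $a^\eta_{\gamma\lambda}(t)$ equal to $t^{h-m}$ times the $q$-Whittaker coefficient $\langle W_{\lambda'}\rangle G_p$ of a vertical-strip LLT polynomial indexed by a Schr\"oder path $p$. That coefficient is obtained from the unicellular formula of Theorem~\ref{thm:unicellular} via the inclusion--exclusion $G_p=(1-t)^{-|S|}\sum_{B\subseteq S}(-1)^{|B|}G_{p_B}$ over strict edges. The case analysis of $(F_j(0)-F_j(1))/(1-t)$ is precisely where the factors $[\arm_j+1]_t$, the power of $1-t$, and the exclusion of $\alpha(p,T)$ come from. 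Without this step, or a genuine substitute for it, your outline does not determine the coefficients, let alone prove (\ref{E:productEP}).
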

	We present two consequences of Theorem \ref{T:main}, after introducing quasisymmetric Hall-Littlewood polynomials and $t$-Demazure characters.
	Let $\CMcal{L}_{\alpha}(X_n;t)$ be the {\em \color{blue} quasisymmetric Hall-Littlewood polynomial} defined by
	\begin{align}\label{E:defL}
		\CMcal{L}_{\alpha}(X_n;t)=\CMcal{L}_{\alpha}(t)=\sum_{\gamma^+=\alpha}\E_{\gamma}(t),
	\end{align}
	where $\alpha$ is a composition of length at most $n$; see \cite[Equation (7.9)]{HLMvW11}. Then the polynomial $\CMcal{L}_{\alpha}(t)$ interpolates the quasisymmetric Schur function $\CMcal{S}_{\alpha}=\CMcal{L}_{\alpha}(0)$ and the monomial quasisymmetric function $M_{\alpha}=\CMcal{L}_{\alpha}(1)$.

	Moreover, let $T_i$ be the Demazure-Lusztig operator defined in (\ref{Def:T}), and let $\Pi_i=T_i+1-t$ for $1\le i<n$. Then the operators $\Pi_i$ also satisfy the braid relations in subsection \ref{ss:2.1}, so that $\Pi_w=\Pi_{i_1}\Pi_{i_2}\cdots \Pi_{i_l}$ for any reduced expression $w=s_{i_1}s_{i_2}\cdots s_{i_l}$ is independent of the chosen reduced word. The {\em \color{blue} $t$-Demazure character}, also called the {\em \color{blue} $t$-key polynomial}, indexed by a weak composition $\gamma\in \mathbb{N}^n$, is defined by
	\begin{align}\label{eq:keydef}
		K_{\gamma}(t)=K_{\gamma}(X_n;t)=\Pi_{w_\gamma}x^{\lambda(\gamma)},
	\end{align}
	where $\lambda(\gamma)=(\lambda_1\ge \cdots\ge\lambda_n\ge 0)$ is the unique partition determined by all parts of $\gamma$, and $w_{\gamma}$ denotes any permutation such that $w_{\gamma}\lambda(\gamma)=\gamma$. Note that (\ref{eq:keydef}) does not depend on the chosen permutation $w_{\gamma}$.
	
	\begin{corollary}\label{cor1}For a composition $\alpha$ and a partition $\lambda$, both of length at most $n$, in the expansion $$\CMcal{L}_{\alpha}(t)P_{\lambda}(t)=\sum_{\beta}b_{\alpha\lambda}^{\beta}(t)\CMcal{L}_{\beta}(t),$$ the coefficient $b_{\alpha\lambda}^{\beta}(t)\in\mathbb{N}[t,1-t]$ is given by
		\begin{align}\label{E:b2a}
			b_{\alpha\lambda}^{\beta}(t)
			=\sum_{\substack{\gamma^+=\alpha,\\ \gamma\le\eta}}a_{\gamma\lambda}^{\eta}(t),
		\end{align}
		for each weak composition $\eta\in\mathbb{N}^n$
		such that $\eta^+=\beta$.
		In particular, $b_{\alpha\lambda}^{\beta}(0)=\langle \CMcal{S}_{\beta}\rangle \CMcal{S}_{\alpha}s_{\lambda}$ equals the number of Littlewood-Richardson skew composition tableaux of shape $\beta/\alpha$ and content $\lambda^r$, as proved in \cite[Theorem 5.1, Proposition~3.4 and Remark~3.5]{HLMvW10}.
	\end{corollary}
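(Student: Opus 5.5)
We derive Corollary \ref{cor1} directly from Theorem \ref{T:main} by summing over all rearrangements. By definition (\ref{E:defL}),
\begin{align*}
\CMcal{L}_{\alpha}(t)P_{\lambda}(t)=\sum_{\gamma^+=\alpha}\E_{\gamma}(t)P_{\lambda}(t)
=\sum_{\gamma^+=\alpha}\sum_{\eta\ge\gamma}a_{\gamma\lambda}^{\eta}(t)\E_{\eta}(t).
\end{align*}
We exchange the two summations and group the $t$-Demazure atoms $\E_{\eta}(t)$ according to $\beta=\eta^+$. The coefficient of $\E_{\eta}(t)$ is then $c_\eta(t):=\sum_{\gamma^+=\alpha,\,\gamma\le\eta}a_{\gamma\lambda}^{\eta}(t)$. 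The $t$-Demazure atoms are linearly independent; they form a basis of the polynomial ring, being triangular with respect to monomials. Hence the product can be rewritten as $\sum_\beta b^\beta_{\alpha\lambda}(t)\CMcal{L}_\beta(t)=\sum_\beta b^\beta_{\alpha\lambda}(t)\sum_{\eta^+=\beta}\E_\eta(t)$ exactly when $c_\eta(t)$ depends only on $\eta^+$. In that case $b^\beta_{\alpha\lambda}(t)=c_\eta(t)$ for any $\eta$ with $\eta^+=\beta$, which is (\ref{E:b2a}).

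The key step is therefore to show that $c_\eta(t)=c_{\eta'}(t)$ whenever $\eta^+=\eta'^+$. I would do this by proving that $\CMcal{L}_{\alpha}(t)P_{\lambda}(t)$ is quasisymmetric and then using the fact that the $\CMcal{L}_\beta(t)$ form a basis of quasisymmetric polynomials in $X_n$. This basis property follows by triangularity: $\CMcal{L}_\beta(t)=M_\beta+$ lower terms, since each $\E_\gamma(t)$ has leading monomial $x^\gamma$. Quasisymmetry of the product holds because $P_\lambda(t)$ is symmetric and $\CMcal{L}_\alpha(t)$ is quasisymmetric, as shown in \cite{HLMvW11}, and the product of a symmetric and a quasisymmetric polynomial is quasisymmetric.

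Once quasisymmetry is established, write $\CMcal{L}_{\alpha}P_{\lambda}=\sum_\beta b_\beta\CMcal{L}_\beta=\sum_\beta b_\beta\sum_{\eta^+=\beta}\E_\eta$. Comparing this with the atom expansion above and using the uniqueness of expansions in the atom basis gives $c_\eta=b_{\eta^+}$. This proves (\ref{E:b2a}) for every $\eta$ with $\eta^+=\beta$. Positivity, namely $b^\beta_{\alpha\lambda}(t)\in\mathbb{N}[t,1-t]$, is then immediate, because each $a^\eta_{\gamma\lambda}(t)$ lies in $\mathbb{N}[t,1-t]$ by Theorem \ref{T:main}.

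For the specialization, set $t=0$. Then $\CMcal{L}_\beta(0)=\CMcal{S}_\beta$ and $P_\lambda(0)=s_\lambda$, so $b^\beta_{\alpha\lambda}(0)=\langle\CMcal{S}_\beta\rangle\CMcal{S}_\alpha s_\lambda$. The cited results of \cite{HLMvW10} identify this coefficient with the number of Littlewood-Richardson skew composition tableaux.

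The main obstacle I expect is making sure that the quasisymmetry of $\CMcal{L}_\alpha(t)$ in finitely many variables is available in the form needed, together with the basis property of the $\CMcal{L}_\beta(t)$ among quasisymmetric polynomials in $n$ variables. If that citation is not sufficient, I would prove quasisymmetry directly. The approach would be to show that the monomial expansion of $\CMcal{L}_\alpha(t)$ has coefficients depending only on the flattened exponent pattern, using the known combinatorial formula of \cite{HLMvW11}.
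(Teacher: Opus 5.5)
Your proposal is correct and follows the paper's proof essentially step for step. You expand the product in $t$-Demazure atoms via Theorem~\ref{T:main}, and use quasisymmetry of the product (from the cited quasisymmetry of the quasisymmetric Hall--Littlewood polynomials) to get a unique expansion in that basis. You then compare coefficients in the atom basis and specialize at $t=0$. Your triangularity argument for the basis property is a small extra justification that the paper simply asserts.
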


	\begin{corollary}\label{cor21}
		For a weak composition $\gamma\in\mathbb N^n$ and a partition $\lambda$ with $\ell(\lambda)\leq n$, in the expansion
		\begin{align*}
			K_\gamma(t)P_\lambda(t)
			=\sum_{\eta\ge \gamma}d_{\gamma\lambda}^{\eta}(t)K_\eta(t),
		\end{align*}
		the coefficient $d_{\gamma\lambda}^{\eta}(t)\in\mathbb{N}[t,1-t]$ is given by
		\begin{align}\label{eq:keycoeff}
			d_{\gamma\lambda}^{\eta}(t)&= t^{\inv(\eta)-\inv(\gamma)}\,a_{\gamma^r \lambda}^{\eta^r}(t),
		\end{align}
		where $\inv(\gamma)=\#\{1\le i<j\le n\,\vert\,\gamma_i>\gamma_j\}$ denotes the inversion number of $\gamma=(\gamma_1,\ldots,\gamma_n)$. In particular, when $t=0$, we obtain (\ref{E:sk1}).
	\end{corollary}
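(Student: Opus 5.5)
The idea is to derive Corollary~\ref{cor21} from Theorem~\ref{T:main}. We relate $t$-Demazure characters to $t$-Demazure atoms in reversed variables, and use that $P_\lambda(t)$ is symmetric, so it commutes with every Demazure--Lusztig operator $T_i$ and hence with every $\Pi_i$.

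The first step is to establish an identity of the form
\begin{equation*}
K_\gamma(X_n;t)=t^{-\inv(\gamma)}\,\E_{\gamma^r}(x_n,\ldots,x_1;t^{\pm1})\cdot(\text{normalization}).
\end{equation*}
The cleanest route is probably this. Let $\omega$ be the involution reversing the variables. One checks on generators that $\omega T_i\omega$ equals $t\,T_{n-i}^{-1}$ up to the convention in (\ref{Def:T}). Since $T_i^{-1}=t^{-1}(T_i+1-t)$ from the quadratic relation $(T_i-t)(T_i+1)=0$ (or its normalization), this gives $\omega\Pi_i\omega=T_{n-i}$ up to a scalar. Now $K_\gamma(t)=\Pi_{w_\gamma}x^{\lambda(\gamma)}$, and $\E_\eta(t)$ is built by applying the $T_i$ (with the appropriate power of $t$) to an antidominant monomial. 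Applying $\omega$ therefore turns $K_\gamma(t)$ into $c_\gamma(t)\E_{\gamma^r}(t)$ in the variables $x_n,\dots,x_1$. Here $c_\gamma(t)$ is a power of $t$ obtained by tracking one factor of $t$ for each simple reflection in $w_\gamma$ that actually moves the monomial. The count of such reflections is $\inv(\gamma)$ or its complement relative to $\lambda(\gamma)$, so after normalization $c_\gamma(t)=t^{\,\inv(\gamma)-\mathrm{const}}$, where the constant depends only on $\lambda(\gamma)$.

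The second step uses the first. Since $P_\lambda$ is symmetric, $\omega P_\lambda=P_\lambda$. Hence
\begin{equation*}
\omega\bigl(K_\gamma P_\lambda\bigr)=c_\gamma\,\E_{\gamma^r}P_\lambda=c_\gamma\sum_{\eta'}a_{\gamma^r\lambda}^{\eta'}(t)\E_{\eta'}(t)
\end{equation*}
by Theorem~\ref{T:main}. Substituting $\E_{\eta'}=c_{\eta'^r}^{-1}\,\omega K_{\eta'^r}$ and applying $\omega$ again gives
\begin{equation*}
d_{\gamma\lambda}^{\eta}=\frac{c_\gamma}{c_\eta}\,a_{\gamma^r\lambda}^{\eta^r}.
\end{equation*}
The constants cancel because $\lambda(\eta)$ and $\lambda(\gamma)$ are both partitions of total degrees differing by $|\lambda|$ and enter only through $\inv$. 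This leaves the factor $t^{\inv(\eta)-\inv(\gamma)}$. The order condition $\eta\ge\gamma$ transfers from $\eta^r\ge\gamma^r$ under the paper's ordering convention, which I would check directly.

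For positivity, the factor $t^{\inv(\eta)-\inv(\gamma)}$ must be a nonnegative power. That is, whenever $a_{\gamma^r\lambda}^{\eta^r}\ne0$ we need either $\inv(\eta)\ge\inv(\gamma)$, or divisibility of $a$ by the required power of $t$. I would verify this from the combinatorial formula (\ref{E:productEP}), through the exponent $\inv^*(T)+h-m$. A cleaner alternative is to prove the key polynomial expansion directly: write $K_\gamma=\sum_{\delta}\E_\delta$ summed over $\delta$ in the Bruhat interval below $\gamma$ (with weights), or simply observe that $d\in\mathbb{Z}[t,t^{-1}]$ must be a polynomial because $K_\gamma P_\lambda$ lies in the $\mathbb{Z}[t]$-span of the $K_\eta$ (they are unitriangular with monomials). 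The main obstacle is pinning down the exact power of $t$ in the conjugation $\omega T_i\omega$ and in $c_\gamma$ under the paper's conventions. Finally, setting $t=0$, the operators $\Pi_i$ become ordinary Demazure operators, and $d(0)$ recovers $a_{\gamma^r\lambda}^{\eta^r}(0)$ when $\inv(\eta)=\inv(\gamma)$. Identifying this with the LR skew key count of (\ref{E:sk1}) follows as in \cite[Section 6.1]{HLMvW10}.
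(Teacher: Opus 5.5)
Your first step fails. Reversing the variables does not send $K_\gamma(t)$ to a scalar multiple of a single atom, under any normalization and for either $t$ or $t^{-1}$. Already for $n=2$, $K_{01}(t)=\Pi_1x_1=x_2+(1-t)x_1$ has two monomials, while every atom of degree one is a single monomial ($\E_{10}(t)=x_1$, $\E_{01}(t)=T_1x_1=x_2$). At $t=0$ this is the familiar $\kappa_{01}=\A_{10}+\A_{01}$, and permuting variables cannot change the number of monomials.

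The conjugation you need to control is in fact $\omega T_i(t)\omega=T_{n-i}(t^{-1})^{-1}$; this is (\ref{E:Ti}) combined with $\overline{T_i}=T_i^{-1}$. Hence $\omega\Pi_i(t)\omega=t\,T_{n-i}(t^{-1})$. So the parameter is inverted, and these operators act on the antidominant monomial $x^{\mu^r}$ rather than on $x^{\mu}$, which does not produce atoms (e.g.\ $T_1x_2=tx_1-(1-t)x_2$). Your Step~2 therefore has nothing to which Theorem~\ref{T:main} can be applied. The factor $t^{\inv(\eta)-\inv(\gamma)}$ you arrive at is correct, but it is not derived.

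The repair is the observation you made at the outset and then dropped: relate $K$ and $\E$ by a Hecke operator, not by a change of variables, and commute that operator past the symmetric factor $P_\lambda$. From the quadratic relation, $\Pi_i=tT_i^{-1}$, so $\Pi_{w^{-1}}T_w=t^{\ell(w)}$. Take $w$ minimal with $w\mu=\gamma$ (so $\ell(w)=\inv(\gamma^r)$) and use the length-additive factorization $\Pi_{w_0}=\Pi_{w_0w}\Pi_{w^{-1}}$; this gives $\Pi_{w_0}\E_\gamma(t)=t^{\inv(\gamma^r)}K_{\gamma^r}(t)$. Applying $\Pi_{w_0}$ to Theorem~\ref{T:main} for $\E_{\gamma^r}(t)P_\lambda(t)$ then yields $t^{\inv(\gamma)}K_\gamma P_\lambda=\sum_\eta t^{\inv(\eta)}a^{\eta^r}_{\gamma^r\lambda}(t)K_\eta$.

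Your $t=0$ discussion also needs correcting. The value $d^{\eta}_{\gamma\lambda}(0)$ is the coefficient of $t^{\inv(\gamma)-\inv(\eta)}$ in $a^{\eta^r}_{\gamma^r\lambda}(t)$, and it can be nonzero when $\inv(\eta)<\inv(\gamma)$. For $n=2$, $\gamma=(1,0)$, $\lambda=(1)$, $\eta=(1,1)$, one has $a^{(1,1)}_{(0,1)(1)}(t)=t$ but $d^{(1,1)}_{(1,0)(1)}(t)=1$. This is why the paper shows, through $\psi$, that $\inv^*(T)+h-m+\inv(\eta^r)-\inv(\gamma^r)$ equals the number of coinversion triples of $\psi(T)$. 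That gives nonnegativity of every exponent and the LR skew key count at the same time.

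Your polynomiality route to positivity can be made to work, but two things are missing. You must prove unitriangularity of the $K_\eta(t)$ over $\mathbb{Z}[t]$. You must also note that each summand $t^{e}(1-t)^{b}\prod[\cdot]_t$ has lowest term $t^{e}$ with coefficient $1$, so lowest-order terms cannot cancel.
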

	
	The rest of the paper is organized as follows. In Section \ref{S:2}, we present preliminaries on the combinatorics of symmetric and non-symmetric polynomials. In Sections \ref{S:3} to \ref{S:5}, we prove Theorem \ref{T:main} and  Corollaries \ref{cor1} and \ref{cor21}. Finally, we conclude with some remarks in Section \ref{S:6}.

		{\color{blue} Declaration of AI use:} The research questions and main ideas originated with the authors. ChatGPT (OpenAI, GPT-6 Astra) assisted in executing our ideas and proofreading the manuscript. The authors have independently developed the proofs in Section \ref{S:4} and of Corollary \ref{cor1}, checked all mathematical arguments, written this manuscript, and take full responsibility for the correctness, originality and content of the work.

	\section{Background and Preliminaries}\label{S:2}
	We begin with a brief review of some combinatorial concepts and the action of the Hecke algebra on Laurent polynomials, followed by the definitions of symmetric and non-symmetric polynomials. We adopt the notation of \cite[Section 5]{BHMPPS:25}.
	
	Throughout this paper, $\gamma,\eta$ denote weak compositions of length $n$, $\alpha,\beta$ represent compositions of length at most $n$, and $\lambda,\mu$ are taken to be partitions of length at most $n$. All polynomials considered are in the finite set of variables $X_n=(x_1,\ldots,x_n)$; for simplicity, we omit $X_n$ from the notation. We use $\langle\S_{*}\rangle f$ to denote the coefficient of $\S_{*}$ in the $\S$-expansion of $f$.
	
	\subsection{Compositions, partitions and polynomials}\label{ss:2.1}
	A {\em \color{blue} weak composition} $\gamma=(\gamma_1,\ldots,\gamma_n)$ of $N$, denoted by $\gamma\vDash _w N$, is a finite sequence of non-negative integers such that $\gamma_1+\cdots+\gamma_n=N$. The size and length of $\gamma$ are defined by $|\gamma|=N$ and $\ell(\gamma)=n$, respectively. Each $\gamma_i$ is called a {\em \color{blue} part} of $\gamma$, and the {\em \color{blue} reverse} of $\gamma$ is denoted by $\gamma^r=(\gamma_n,\ldots,\gamma_1)$. A weak composition $\alpha$ is called a {\em \color{blue}composition} if every part is positive, and we write $\alpha\vDash N$. A {\em\color{blue} partition} $\lambda=(\lambda_1\ge \cdots\ge \lambda_l>0)$ of $N$ is a weakly decreasing composition of $N$, denoted by $\lambda\vdash N$. Sometimes we write $\lambda=(\lambda_1\ge\cdots\ge\lambda_n\ge 0)$ when $\ell(\lambda)=l\le n$, with the convention that $\lambda_i=0$ for $\ell(\lambda)<i\le n$.  {When specifying a filling type, we write
		$\lambda^r=(0<\lambda_l\le\ldots\le\lambda_1)$.} Given a weak composition $\gamma$, {let $\gamma^+$ denote the composition obtained by deleting all zero parts of $\gamma$.
		Let $\lambda(\gamma)$ denote the sequence obtained by rearranging all parts of $\gamma$, including zeros, in weakly decreasing order, and write
		$\lambda^r(\gamma)=(\lambda(\gamma))^r$ for its reverse.}
	\begin{example}
		Let $\gamma=(1,0,2,2,4,5)$ be a weak composition of length $\ell(\gamma)=6$ and of size $|\gamma|=14$. Then {$\gamma^+=(1,2,2,4,5)$, $\lambda(\gamma)=(5,4,2,2,1,0)$ and $\lambda^r(\gamma)=(0,1,2,2,4,5)$.}
	\end{example}
	For a weak composition $\eta=(\eta_1,\ldots,\eta_n)\in\mathbb{N}^n$, the \emph{\color{blue} skyline diagram} consists of
	left-justified rows of boxes, with the $i$th row from the top containing $\eta_i$ boxes, in the {\em \color{blue} English convention}.
	The columns of the diagram are numbered from the left, and the diagram is augmented by a {\em\color{blue} basement}, consisting of an additional box in column $0$ of each row, marked with $\ast$.
	The skyline diagram with a basement is denoted by $\dg(\eta)$.
	
	For two weak compositions $\gamma,\eta\in\mathbb{N}^n$ satisfying $\gamma_i\leq\eta_i$ for all $i$, we simply write $\gamma\le \eta$.
	The \emph{\color{blue} skew skyline diagram} $\dg(\eta/\gamma)$ is obtained from the augmented skyline diagram
	$\dg(\eta)$ by treating the boxes belonging to $\dg(\gamma)$ as an extension of the basement and
	marking them with $\ast$.
	The boxes outside this extended basement are called \emph{\color{blue} ordinary boxes}; their coordinates are precisely $\{(i,k):1\leq i\leq n,\ \gamma_i<k\leq\eta_i\}$. See Figure \ref{fig:skyline} for an example.
	
	The {\em \color{blue} Young diagram} of $\lambda$, denoted by $\dg(\lambda)$, is an array of left-justified boxes with $\lambda_i$ boxes in row $i$ from the top.
	A box in the Young diagram of partition $\lambda$ has {\em \color{blue} coordinates $(i,j)$} if it is in the $i$th row from the top and the $j$th column from the left. Let $\lambda'$ be the {\em \color{blue} conjugate or transpose} of $\lambda$; that is, the Young diagram of $\lambda'$ is obtained from that of $\lambda$ by reflecting across the main diagonal (boxes with coordinates $(i,i)$). For two partitions $\lambda$ and $\mu$ with $\lambda_i\ge \mu_i$ for every $i$, the {\em \color{blue} skew diagram} of $\lambda/\mu$ is $\dg(\lambda/\mu)=\{(i,j) \,\vert\,i\ge 1, \,\mu_i+1\le j\le \lambda_i\}$. If $\mu=\emptyset$, then $\lambda=\lambda/\emptyset$.
	\begin{figure}[H]
		\centering
		\begin{minipage}[H]{0.25\linewidth}
			\centering
			\[\begin{ytableau}
				*(green) * & \ast & \ \\
				*(green) * & \none \\
				*(green) * &\ast &  \ast & \ \\
				*(green) * & \ \\
				*(green) * & \ast & \ast & \ & \
			\end{ytableau}
			\]
		\end{minipage}
		\begin{minipage}[H]{0.25\linewidth}
			\centering
			\[\begin{ytableau}
				\ & \ & \ & \ & \ \\
				\ & \  &  \ & \ \\
				\ & \ \\
				\ & \ \\
				\ & \none & \none \\
			\end{ytableau}
			\]
		\end{minipage}
		\caption{A skew skyline diagram $\dg(\eta/\gamma)$ where $\eta=(2,0,3,1,4)$, $\gamma=(1,0,2,0,2)$ and the basement is colored green (left), and a Young diagram $\dg(\lambda)$ with $\lambda=(5,4,2,2,1)$ (right).}\label{fig:skyline}
	\end{figure}
	
	Let $\mathfrak{S}_n$ denote the symmetric group of $[n]=\{1,\ldots,n\}$, and let $s_{i}=(i\,i+1)$ be the {\em\color{blue} simple transposition} that swaps $i$ and $i+1$, for $1\le i<n$. It is well-known that $\mathfrak{S}_n$ is generated by these simple transpositions. An expression  $w=s_{i_{l}}\cdots s_{i_1}$ is said to be {\em \color{blue} reduced} if $\ell$ is minimal among all expressions for $w$. This minimal number $l$ is called the {\em\color{blue} length} of ${w}$ and is denoted by $\ell(w)$, which can be computed by counting the inversions of $w$, i.e.,
	\begin{align}\label{E:inv}
		\ell(w)=\inv(w)=|\{(i,j)\,\vert\,i<j \mbox{ and }\,w(i)>w(j)\}|.
	\end{align}
	For a Laurent polynomial $f\in \mathbb{Q}(t)[x_1^{\pm 1},\ldots,x_n^{\pm 1}]$, the transposition $s_i$ acts on $f$ by swapping the variables $x_i$ and $x_{i+1}$; namely,
	$s_i f(\ldots, x_i,x_{i+1},\ldots)=f(\ldots,x_{i+1},x_i,\ldots)$.
	The {\em \color{blue} Demazure-Lusztig operator} $T_i$ is defined by
	\begin{align}\label{Def:T}
		T_i=ts_i+\frac{(1-t)x_{i+1}}{x_{i+1}-x_i}(s_i-1)=t+\frac{tx_i-x_{i+1}}{x_i-x_{i+1}}(s_i-1).
	\end{align}
	for $1\le i<n$. These operators give rise to an action of the Hecke algebra $\H(\mathfrak{S}_n)$ on the ring of Laurent polynomials $\mathbb{Q}(t)[x_1^{\pm 1},\ldots,x_n^{\pm 1}]$ and satisfy the following relations:
	\vspace{-2mm}
	\begin{enumerate}
		\item Quadratic relation: $(T_i-t)(T_i+1)=0$.
		\item Braid relations: $T_iT_{i+1}T_i=T_{i+1}T_iT_{i+1}$ and $T_iT_j=T_jT_i$ for $|i-j|>1$.
	\end{enumerate}
	\vspace{-2mm}
	Meanwhile, the braid relations (2) imply that the operator
	\begin{align}\label{E:Tw}
		T_{w}=T_{i_1}T_{i_2}\cdots T_{i_{l}}
	\end{align}
	for any reduced expression $w=s_{i_1}s_{i_2}\cdots s_{i_l}$, is well-defined in the sense that it is independent of the chosen reduced word. The set  $\{T_w\,\vert\,w\in \mathfrak{S}_n\}$ forms a $\mathbb{Q}(t)$-basis of the Hecke algebra $\H(\mathfrak{S}_n)$.  We denote by $\overline{T}_i$ the simultaneous substitutions $t\mapsto t^{-1}$ and $x_i\mapsto x_i^{-1}$ in the right-hand side of (\ref{Def:T}). Using this convention, we have $\overline{T_i}=T_{i}^{-1}$, which together with (\ref{E:Tw}) implies
	\begin{align*}
		\overline{T_w}=\overline{T_{i_1}}\,\overline{T_{i_2}}\cdots \overline{T_{i_{l}}}=(T_{i_1})^{-1}(T_{i_2})^{-1}\cdots (T_{i_l})^{-1}=(T_{w^{-1}})^{-1}.
	\end{align*}
	For a positive integer $n$, we define the {\em \color{blue}symmetrizer}
	\begin{align*}
		\CMcal{S}_{n}=\sum_{w\in \mathfrak{S}_n}T_{w}.
	\end{align*}
	{For a partition $\lambda$ with $\ell(\lambda)\le n$, we write $\lambda=(\lambda_1\ge \cdots\ge\lambda_n\ge 0)$,} and let $m_i=m_i(\lambda)$ denote the multiplicity of part $i$ in $\lambda$ for $i\ge 0$. Let
	\begin{align}\label{E:vlam}
		v_{\lambda}(t)=\prod_{i\ge 0} [m_i]_t!,\,\mbox{ where }\,  [m]_t!=\prod_{1\le k\le m}[k]_t\,\, \mbox{ with }
		[k]_t=\frac{1-t^k}{1-t}=\sum_{i=0}^{k-1}t^i
	\end{align}
	for $k\ge 1$ and $[0]_t=1$. Moreover,
	let $x^{\lambda}=x_1^{\lambda_1}\cdots x_n^{\lambda_n}$ be the monomial corresponding to $\lambda=(\lambda_1\ge \cdots\ge \lambda_n\ge 0)$.
	\begin{definition}[{\em \color{blue} Hall-Littlewood $P$-polynomials}]\label{Def:P}
		The Hall-Littlewood $P$-polynomial, in the variables $X_n=(x_1,\ldots,x_n)$ and indexed by a partition $\lambda$  {with $\ell(\lambda)\le n$}, is a symmetric polynomial given by
		\begin{align}\label{E:defP}
			P_{\lambda}(t)=P_{\lambda}(X_n;t)=\frac{\CMcal{S}_{n}(x^{\lambda})}{v_{\lambda}(t)}=\frac{1}{v_{\lambda}(t)}\sum_{w\in \mathfrak{S}_n} w\left(x^{\lambda}\prod_{i<j}\frac{1-tx_j/x_i}{1-x_j/x_i}\right),
		\end{align}
		where the last equality follows from Lusztig's identity \cite[Theorem 6.6]{Lusztig}. These polynomials $P_{\lambda}(t)$ for partitions $\lambda$ of length at most $n$ form a basis of the symmetric polynomials in $\mathbb{Q}(t)[x_1,\ldots,x_n]$. In particular, $P_{\lambda}(0)=s_{\lambda}$ is the {\color{blue} Schur function}.
	\end{definition}
	\begin{example}
		For $n=3$ and $\lambda=(2,1)$, we have $v_{\lambda}(t)=1$ and
		\begin{align*}
			P_{21}(t)=\CMcal{S}_3(x_1^2x_2)&=(1+T_1+T_2+T_1T_2+T_2T_1+T_1T_2T_1)(x_1^2x_2)\\
			&=\sum_{w\in \mathfrak{S}_3}w\left(x_1^2x_2\frac{(x_1-tx_2)(x_1-tx_3)(x_2-tx_3)}{(x_1-x_2)(x_1-x_3)(x_2-x_3)}\right)\\
			&=x_1^2x_2+x_1^2x_3+x_2^2x_3+x_1x_2^2+x_1x_3^2+x_2x_3^2+(2-t-t^2)x_1x_2x_3.
		\end{align*}
	\end{example}
	For any permutation $w\in \mathfrak{S}_n$ and any weak composition $\gamma\in \mathbb{N}^n$, we define $(w\gamma)_i=\gamma_{w^{-1}(i)}$ for $1\le i\le n$, namely $w\gamma=(\gamma_{w^{-1}(1)},\ldots,\gamma_{w^{-1}(n)})$.
	
	\begin{definition}[{\em \color{blue} Non-symmetric Hall-Littlewood polynomials}] \label{Def:F}
		For a sequence $\gamma\in \mathbb{Z}^{n}$, let $\mu=\lambda(\gamma)$. Then
		the non-symmetric Hall-Littlewood polynomials are given by
		\begin{align*}
			F_{\gamma}(t^{-1})&=F_{\gamma}(X_n;t^{-1})=\overline{T_v}\,x^{\mu^r},\\
			\E_{\gamma}(t)&=\E_{\gamma}(X_n;t)=T_{w}\,x^{\mu},
		\end{align*}
		where $v$ and $w$ are the unique permutations of minimal length such that $\gamma=v\mu^r=w\mu$, respectively. The sets $\{F_{\gamma}(t)\}_{\gamma\in \mathbb{N}^n}$ and $\{\E_{\gamma}(t)\}_{\gamma\in \mathbb{N}^n}$ each form a basis of the polynomial ring $\mathbb{Q}(t)[x_1,\ldots,x_n]$. In particular, $\E_{\gamma}(0)=\A_{\gamma}$ is the {\color{blue} $t$-Demazure atom}.
	\end{definition}

	\begin{proposition}\cite[Proposition 28, equation (37)]{Alex16}
	Let $\lambda=\lambda(\gamma)$, and let $\tau$ be the shortest permutation such that $\gamma=\tau \lambda$. Then,
	\begin{align*}
		\A_{\gamma}(X_n;t)=T_{\tau}x^{\lambda}=\E_{\gamma}(X_n;t).
	\end{align*}
\end{proposition}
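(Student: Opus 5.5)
The second equality $T_{\tau}x^{\lambda}=\E_{\gamma}(X_n;t)$ is immediate from Definition \ref{Def:F}. There $\E_{\gamma}(t)=T_w x^{\mu}$ with $\mu=\lambda(\gamma)=\lambda$ and $w$ the unique minimal-length permutation with $\gamma=w\mu$, and this $w$ is exactly $\tau$. So all the content lies in the first equality. There $\A_{\gamma}(X_n;t)$ is Alexandersson's $t$-Demazure atom, which I take to be defined as the $q=0$ specialization of the nonsymmetric Macdonald polynomial $E_{\gamma}(X_n;q,t)$, or equivalently by the corresponding non-attacking-filling formula. My plan is to show that Alexandersson's atoms satisfy the same initial condition and the same recursion in the Demazure--Lusztig operators as $T_{\tau}x^{\lambda}$, and then conclude by induction on $\ell(\tau)$.

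\emph{Base case} ($\tau=\mathrm{id}$, so $\gamma=\lambda$ is weakly decreasing). I would use the triangularity of nonsymmetric Macdonald polynomials: $E_{\lambda}(X_n;q,t)=x^{\lambda}+\sum_{\nu\prec\lambda}c_{\nu}(q,t)x^{\nu}$ in the Bruhat-type order on weak compositions. When $\lambda$ is dominant, the coefficients $c_\nu$ all vanish at $q=0$. Combinatorially, the only non-attacking filling of the skyline of a partition with no ascent and no non-trivial inversion weight is the one where row $i$ is filled with $i$. Hence $\A_{\lambda}(X_n;t)=x^{\lambda}$.

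\emph{Inductive step.} Suppose $\ell(\tau)>0$. Choose a reduced factorization $\tau=s_i\tau'$ with $\ell(\tau')=\ell(\tau)-1$, and set $\gamma'=\tau'\lambda$. By minimality of $\tau$, $\tau'$ is again minimal for $\gamma'$, and we have $\gamma'_i>\gamma'_{i+1}$ and $\gamma=s_i\gamma'$. The required identity is
\begin{align*}
\A_{s_i\gamma'}(X_n;t)=T_i\,\A_{\gamma'}(X_n;t)\qquad\text{whenever }\gamma'_i>\gamma'_{i+1}.
\end{align*}
I would derive this from the Knop--Sahi intertwiner relation for nonsymmetric Macdonald polynomials, which for $\gamma'_i>\gamma'_{i+1}$ reads
\begin{align*}
E_{s_i\gamma'}=\Bigl(T_i+\frac{1-t}{1-q^{a}t^{b}}\Bigr)E_{\gamma'},
\end{align*}
for suitable integers $a>0$ and $b$ determined by $\gamma'$, in the appropriate normalization. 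Setting $q=0$, the correction term becomes the constant $1-t$ or $0$, depending on the sign conventions. The convention that makes $E_{\gamma}(0)=\A_{\gamma}$ (the classical atom) must be the one in which the correction vanishes, giving exactly $T_i$. A quick check at $t=0$ confirms this: there $T_i$ becomes $-\pi_i$ for the isobaric operator $\pi_i$, up to the known sign convention, which is the atom operator. Iterating the recursion gives $\A_{\gamma}=T_{i_1}\cdots T_{i_l}x^{\lambda}=T_{\tau}x^{\lambda}$, and the braid relations make this independent of the reduced word.

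\emph{Main obstacle.} I expect the hardest step to be matching conventions rather than any new mathematics. Between Alexandersson, Haglund--Haiman--Loehr and the present paper, the parameter may appear as $t$ versus $t^{-1}$, variables may be reversed ($x_i\leftrightarrow x_{n+1-i}$), compositions may be reversed, and the basement may be increasing or decreasing. I would first fix the conventions by checking the case $n=2$, $\gamma=(0,1)$ by hand. In that case $T_1x_1=t x_2+\frac{(1-t)x_2}{x_2-x_1}(x_2-x_1)=x_2$, which should agree with Alexandersson's atom. Only then would I apply the intertwiner identity, choosing the form whose $q=0$ limit is $T_i$ rather than $T_i+1-t$. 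The latter would instead produce the $t$-key operator $\Pi_i$ of (\ref{eq:keydef}).
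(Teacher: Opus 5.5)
\paragraph{Verdict.} The proposal has a genuine gap: the inductive step is asserted rather than derived. The paper gives no argument of its own for this statement. The second equality is Definition~\ref{Def:F}, exactly as you say, and the first is simply imported from Alexandersson, so everything rests on your induction.

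\paragraph{Your recursion gives the $t$-key operator, not $T_i$.} In the form you quote, $E_{s_i\gamma'}=\bigl(T_i+\frac{1-t}{1-q^{a}t^{b}}\bigr)E_{\gamma'}$ with $a=\gamma'_i-\gamma'_{i+1}>0$, the correction term at $q=0$ is $1-t$. No choice of sign convention makes it $0$.
\begin{itemize}
\item For $n=2$ your relation gives $E_{(0,1)}(x_1,x_2;0,t)=(T_1+1-t)x_1=x_2+(1-t)x_1=\Pi_1x_1$.
\item This agrees with the $q=0$ value of $E_{(0,1)}=x_2+\frac{1-t}{1-qt}x_1$ in the Haglund--Haiman--Loehr normalization.
\item So the family named in your first paragraph (the $q=0$ specialization of $E_\gamma$) is the $t$-key family $K_\gamma(t)$ of (\ref{eq:keydef}), not the atoms.
\item Computing the Macdonald side of your planned $n=2$ check would have revealed this; you only computed $T_1x_1$.
\end{itemize}
Declaring that the right convention ``must be'' the one in which the correction vanishes assumes the conclusion. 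The $t=0$ check does not help either. At $t=0$ one has $T_i=\pi_i-1$ exactly and $\Pi_i=\pi_i$, where $\pi_if=(x_if-x_{i+1}s_if)/(x_i-x_{i+1})$, so there is no sign issue. That check identifies the target operator, not the operator your Macdonald specialization actually satisfies.

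\paragraph{What a correct version would need.} The correction term vanishes only in the limit $q\to\infty$. So the family you need is the reversed and inverted one, $E_{\gamma^r}(x_n,\ldots,x_1;q^{-1},t^{-1})$ at $q=0$ (Mason's normalization, whose $t=0$ value is the classical atom). For this family, two steps are still missing.
\begin{itemize}
\item \emph{Transport the intertwiner} through $x_i\mapsto x_{n+1-i}$ and $t\mapsto t^{-1}$. Since $w_0T_{n-i}(t^{-1})w_0=T_i(t)^{-1}$, this gives $\A_{s_i\gamma}=T_i^{-1}\A_{\gamma}$ for $\gamma_i<\gamma_{i+1}$, which is equivalent to the relation you want.
\item \emph{Redo the base case.} It now says that the antidominant $E_{\lambda^r}(x;q,t)$ tends to the monomial $x^{\lambda^r}$ as $q\to\infty$. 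That is not the $q=0$ triangularity you invoked. The dominant statement actually fails at $q=\infty$ in the standard family: $E_{(2,0)}=x_1^2+\frac{q(1-t)}{1-qt}x_1x_2\to x_1^2+(1-t^{-1})x_1x_2$.
\end{itemize}
As written, your base case and your inductive step concern two different families. You would also still need to check that the family you end with is the one Alexandersson calls $\A_\gamma(X_n;t)$, since that is the object the paper is citing.
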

	
	\begin{example}
		If {$\gamma=(1,2,1,0)$} has length $n=4$, then $\mu=\lambda(\gamma)=(2,1,1,0)$ and $\mu^r=(0,1,1,2)$. Note that $v=4132=s_3s_2s_1s_3$ is the shortest permutation satisfying $\gamma=v\mu^r$, and hence
		\begin{align*}
			F_{1210}(t^{-1})=\overline{T_3}\,\overline{T_2}\,\overline{T_1}\,\overline{T_3}\,(x_2x_3x_4^2)
			=x_1x_2^2x_3+t^{-1}(1-t^{-1})x_1x_2x_3x_4.
		\end{align*}
		Equivalently, $F_{1210}(t)=x_1x_2^2x_3+t(1-t)x_1x_2x_3x_4$.
	\end{example}
	Let $w_0=n\,n-1\,\ldots\,1$ be the longest permutation of $[n]$. For any finite sequence $a=(a_1,\ldots,a_n)$, note that $a^r=w_0\,a=(a_n,\ldots,a_1)$. The lemma below states that $F_{\gamma}(t)$ and $\E_{\gamma}(t)$ are equal, up to the reversal of the weak composition $\gamma$ and of the variables $X_n$.
	\begin{lemma}\label{E:etof}
		For every weak composition $\gamma\in \mathbb{N}^n$, we have
		\begin{align}\label{E:EtoF}
			\E_{\gamma}(t)=\E_{\gamma}(X_n;t)=F_{\gamma^{r}}(X_n^r;t).
		\end{align}
	\end{lemma}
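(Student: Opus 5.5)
The plan is to unwind both definitions into explicit products of Demazure--Lusztig type operators applied to a monomial. Then we match the two sides by conjugating with the longest permutation $w_0$, which reverses the variables. Throughout, let $\mu=\lambda(\gamma)$, and note that $\lambda(\gamma^r)=\mu$ as well. By Definition \ref{Def:F}, $\E_{\gamma}(t)=T_w x^{\mu}$, where $w$ is the shortest permutation with $\gamma=w\mu$. Also $F_{\gamma^r}(t^{-1})=\overline{T_v}\,x^{\mu^r}$, where $v$ is the shortest permutation with $\gamma^r=v\mu^r$.

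\textbf{Step 1: identify the operator obtained from $F$ after $t^{-1}\mapsto t$.} Replacing $t^{-1}$ by $t$ in $\overline{T_i}$ undoes the substitution in $t$ but keeps the inversion $x_j\mapsto x_j^{-1}$ of the variables in the coefficient. Hence $\overline{T_i}$ becomes
\[
T_i' = t+\frac{t x_i^{-1}-x_{i+1}^{-1}}{x_i^{-1}-x_{i+1}^{-1}}(s_i-1)=t+\frac{t x_{i+1}-x_i}{x_{i+1}-x_i}(s_i-1),
\]
which is $T_i$ with the roles of $x_i$ and $x_{i+1}$ interchanged. The monomial $x^{\mu^r}$ contains no $t$. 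Therefore $F_{\gamma^r}(X_n;t)=T'_{i_1}\cdots T'_{i_l}\,x^{\mu^r}$ for a reduced word $v=s_{i_1}\cdots s_{i_l}$. I expect this bookkeeping to be the main obstacle: one must be careful that the bar acts on the operator's coefficients, and not on the polynomial it is applied to.

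\textbf{Step 2: conjugate by the reversal of variables.} Let $\omega$ denote the substitution $x_j\mapsto x_{n+1-j}$, so that $f(X_n^r)=\omega f$. A direct check from the formula for $T'_i$ gives $\omega\, T_i'\,\omega = T_{n-i}$, because $\omega$ sends the pair $(x_i,x_{i+1})$ to $(x_{n+1-i},x_{n-i})$ and $\omega s_i\omega=s_{n-i}$. Since also $\omega x^{\mu^r}=x^{\mu}$, we obtain
\[
F_{\gamma^r}(X_n^r;t)=T_{n-i_1}\cdots T_{n-i_l}\,x^{\mu}.
\]

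\textbf{Step 3: compare permutations.} Conjugation $u\mapsto w_0uw_0$ sends $s_i$ to $s_{n-i}$. It is a length-preserving bijection of $\mathfrak S_n$, so $s_{n-i_1}\cdots s_{n-i_l}$ is a reduced word for $w_0vw_0$, and the product above equals $T_{w_0vw_0}x^{\mu}$. From $\gamma^r=v\mu^r$ we get $w_0\gamma=v w_0\mu$, that is, $\gamma=(w_0vw_0)\mu$. The same conjugation maps the set $\{u: u\mu^r=\gamma^r\}$ bijectively onto $\{u:u\mu=\gamma\}$ while preserving length. Hence the shortest element $v$ of the first set is sent to the shortest element $w$ of the second, so $w_0vw_0=w$. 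Consequently $F_{\gamma^r}(X_n^r;t)=T_wx^{\mu}=\E_\gamma(X_n;t)$, which proves (\ref{E:EtoF}).
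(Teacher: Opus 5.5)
Your proof is correct and is essentially the paper's argument. You conjugate by the reversal $w_0$ of the variables, use $w_0s_iw_0=s_{n-i}$ to turn a reduced word for $v$ into one for $w=w_0vw_0$ (minimality is preserved since conjugation preserves length), and identify each conjugated operator with $T_{n-i}$. The only difference is cosmetic: you undo $t\mapsto t^{-1}$ in the operators first and then conjugate to get $T_{n-i}(t)$, whereas the paper conjugates $\overline{T_i}$ directly to get $T_{n-i}(t^{-1})$, proves $F_{\gamma^r}(X_n^r;t^{-1})=\E_\gamma(t^{-1})$, and inverts $t$ at the end.
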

	\begin{proof}
		Let $\delta=w_0\,\gamma=\gamma^r$. Then $\lambda^r(\delta)=\lambda^r(\gamma)=\mu^r$. Take $w$ to be the shortest permutation satisfying $\gamma=w\mu$ and choose $v=w_0 w w_0$. It follows that
		\begin{align*}
			v\mu^r=(w_0 w w_0)\mu^r=(w_0 w w_0)w_0\mu=w_0\gamma=\delta.
		\end{align*}
		Since $w_0s_iw_0=s_{n-i}$ for $1\le i<n$, we have $\ell(w)=\ell(v)$, thus implying that $v$ is the unique permutation of minimal length such that $\delta=v\mu^r$. By Definition \ref{Def:F}, we obtain
		\begin{align}\label{E:F2}
			F_{\delta}(X_n^r;t^{-1})=w_0F_{\delta}(X_n;t^{-1})=w_0\overline{T_v}(t)x^{\mu^r}=(w_0\,\overline{T_v}(t)\,w_0)x^{\mu}.
		\end{align}
		We claim that $w_0\,\overline{T_v}(t)\,w_0=T_{w}(t^{-1})$. Since $w_0s_iw_0=s_{n-i}$ for $1\le i<n$, we find that
		\begin{align}\label{E:Ti}
			w_0\overline{T_i}(t)w_0=t^{-1}s_{n-i}+\frac{(1-t^{-1})x_{n-i+1}}{x_{n-i+1}-x_{n-i}}(s_{n-i}-1)=T_{n-i}(t^{-1}).
		\end{align}
		For any reduced expression $w=s_{i_1}\cdots s_{i_r}$ of $w$, we have
		\begin{align*}
			v=w_0\,w\,w_0=(w_0s_{i_1}w_0)\cdots (w_0s_{i_r}w_0)=s_{n-i_1}\cdots s_{n-i_r}.
		\end{align*}
		Consequently, it follows from (\ref{E:Ti}) that
		\begin{align*}
			w_0\overline{T_v}(t)w_0=(w_0\overline{T}_{n-i_1}(t)w_0)\cdots (w_0\overline{T}_{n-i_r}(t)w_0)=T_{i_1}(t^{-1})\cdots T_{i_r}(t^{-1})=T_{w}(t^{-1}).
		\end{align*}
		Substituting this into (\ref{E:F2}) gives $F_{\delta}(X_n^r;t^{-1})=T_{w}(t^{-1})x^{\mu}=\E_{\gamma}(t^{-1})$, completing the proof.
	\end{proof}	
	It follows from Definitions \ref{Def:P} and \ref{Def:F} that the non-symmetric Hall-Littlewood polynomials $\E_{\gamma}(t)$ naturally refine the Hall-Littlewood polynomials $P_{\lambda}(t)$, as stated below. Since $P_{\lambda}(t)$ and $\E_{\gamma}(t)$ are specializations of symmetric and non-symmetric Macdonald polynomials, the following proposition is also an immediate consequence of \cite{Mac94}\cite[Lemma 2.5 (a)]{Marshall: 99}\cite[Proposition 5.3.1]{HHL08}\cite[Corollary 30]{Alex16}. To keep the paper self-contained, we give a proof of this refinement.
	\begin{proposition} \label{prop:PtoE}
		For a partition $\lambda=(\lambda_1\ge \cdots\ge \lambda_{n}\ge0)$, we have
		\begin{align}\label{E:pe11}
			P_{\lambda}(t)&=\sum_{\lambda(\gamma)=\lambda}\E_{\gamma}(t).
		\end{align}
	\end{proposition}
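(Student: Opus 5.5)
Our plan is to work directly from the operator definitions: $P_{\lambda}(t)=\CMcal{S}_n(x^{\lambda})/v_{\lambda}(t)$ with $\CMcal{S}_n=\sum_{w\in\mathfrak{S}_n}T_w$, and $\E_{\gamma}(t)=T_{w}x^{\lambda}$ with $w$ the shortest permutation satisfying $\gamma=w\lambda$. We write $W_\lambda\subseteq\mathfrak{S}_n$ for the stabilizer of $\lambda$; it is the parabolic (Young) subgroup generated by those $s_i$ with $\lambda_i=\lambda_{i+1}$. Every $u\in\mathfrak{S}_n$ factors uniquely as $u=w\,z$, where $w$ is a minimal-length coset representative of $uW_\lambda$ and $z\in W_\lambda$. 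This factorization satisfies $\ell(u)=\ell(w)+\ell(z)$, and therefore $T_u=T_wT_z$ by (\ref{E:Tw}) applied to concatenated reduced words. The minimal coset representatives $w$ correspond bijectively to the distinct rearrangements $\gamma=w\lambda$ with $\lambda(\gamma)=\lambda$, and each such $w$ is exactly the permutation in Definition \ref{Def:F}.

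Summing over this factorization gives
\begin{align*}
\CMcal{S}_n(x^{\lambda})=\sum_{w}T_w\Big(\sum_{z\in W_\lambda}T_z\,x^{\lambda}\Big),
\end{align*}
where the outer sum runs over the minimal coset representatives $w$. The key local computation is $T_i x^{\lambda}=t\,x^{\lambda}$ whenever $\lambda_i=\lambda_{i+1}$. This holds because $s_ix^{\lambda}=x^{\lambda}$, so the second form of (\ref{Def:T}) reduces to multiplication by $t$. Consequently $T_z x^{\lambda}=t^{\ell(z)}x^{\lambda}$ for every $z\in W_\lambda$, and the inner sum equals $\sum_{z\in W_\lambda}t^{\ell(z)}\,x^{\lambda}$.

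It remains to identify $\sum_{z\in W_\lambda}t^{\ell(z)}$ with $v_{\lambda}(t)$. Since $W_\lambda\cong\prod_{i\ge0}\mathfrak{S}_{m_i}$ (including the block of zero parts) and length is additive across the factors, we can apply the standard inversion generating function $\sum_{z\in\mathfrak{S}_m}t^{\inv(z)}=[m]_t!$ together with (\ref{E:inv}). This gives $\prod_i[m_i]_t!=v_{\lambda}(t)$. Dividing by $v_\lambda(t)$ then yields $P_{\lambda}(t)=\sum_w T_wx^{\lambda}=\sum_{\lambda(\gamma)=\lambda}\E_{\gamma}(t)$, as claimed.

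The only point requiring care is the parabolic factorization $T_u=T_wT_z$ together with its length additivity. This is a standard Coxeter-group fact; if it needs justification here, we would cite it or argue that a reduced word for $w$ followed by a reduced word for $z$ remains reduced, since $w$ is minimal in its coset and so $\ell(wz)=\ell(w)+\ell(z)$. Everything else is the one-line eigenvalue computation for $T_i$ on $x^{\lambda}$.
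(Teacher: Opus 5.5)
Your proof is correct and is essentially the paper's argument. Both proofs decompose $\mathfrak{S}_n$ into cosets of the stabilizer $G_\lambda$ using the minimal-length representatives $w_\gamma$, use $\ell(w\sigma)=\ell(w)+\ell(\sigma)$ to get $T_{w\sigma}=T_wT_\sigma$, apply $T_ix^\lambda=tx^\lambda$ for $s_i\in G_\lambda$, and identify $\sum_{\sigma\in G_\lambda}t^{\ell(\sigma)}=\prod_{i\ge0}[m_i]_t!=v_\lambda(t)$. You are, if anything, slightly more explicit than the paper in noting that $G_\lambda$ is the standard parabolic subgroup generated by the $s_i$ with $\lambda_i=\lambda_{i+1}$, which is exactly what justifies $T_\sigma x^\lambda=t^{\ell(\sigma)}x^\lambda$.
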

	\begin{proof}
		Let $G_{\lambda}=\{w\in\mathfrak{S}_n\,\vert\,w\lambda=\lambda\}$ be the stabilizer group of $\lambda$. For each weak composition $\gamma\in \mathbb{N}^n$, let $w_{\gamma}$ be the unique permutation of minimal length such that $\gamma=w_{\gamma}\lambda$, then the set
		\begin{align*}
			H_{\lambda}=\{w_{\gamma}\,\vert\,\lambda(\gamma)=\lambda\}
		\end{align*}
		is a set of minimal coset representatives in the cosets $\mathfrak{S}_n/G_{\lambda}$. Therefore,
		\begin{align}\label{E:cosetdec}
			\mathfrak{S}_n=\bigsqcup_{w\in H_{\lambda}}wG_{\lambda}.
		\end{align}
		For any $w\in H_{\lambda}$ and $\sigma\in G_{\lambda}$, we have $\ell(w\sigma)=\ell(w)+\ell(\sigma)$ and hence $T_{w\sigma}=T_{w}T_{\sigma}$. We claim that
		\begin{align}\label{E:claim1}
			\sum_{\sigma\in G_{\lambda}}T_{\sigma}x^{\lambda}=\sum_{\sigma\in G_{\lambda}}t^{\ell(\sigma)}x^{\lambda}=v_\lambda(t)x^{\lambda}.
		\end{align}
		For each $s_i\in G_{\lambda}$, we have $s_i\lambda=\lambda$, thus $T_ix^{\lambda}=tx^{\lambda}$ by (\ref{Def:T}) and hence $T_{\sigma}x^{\lambda}=t^{\ell(\sigma)}x^{\lambda}$ for any $\sigma\in G_{\lambda}$. Recall that $m_i=m_i(\lambda)$ counts the multiplicity of part $i$ in the partition $\lambda$ for $i\ge 0$ and
		\begin{align*}
			\sum_{s\in \mathfrak{S}_m}t^{\ell(s)}=\sum_{s\in \mathfrak{S}_m}t^{\inv(s)}=[m]_t!
		\end{align*}
		where the inversion statistic is defined by (\ref{E:inv}).
		{Since $G_\lambda\cong\prod_{i\ge0}\mathfrak S_{m_i}$, we find
			\begin{align*}
				\sum_{\sigma\in G_{\lambda}}t^{\ell(\sigma)}=\prod_{i\ge 0}\sum_{s\in \mathfrak{S}_{m_i}}t^{\ell(s)}=\prod_{i\ge 0}\sum_{s\in \mathfrak{S}_{m_i}}t^{\mathrm{inv}(s)}
				=\prod_{i\ge 0}[m_i]_t!=v_{\lambda}(t).
		\end{align*} }
		This proves the claim (\ref{E:claim1}). Using the coset decomposition (\ref{E:cosetdec}) and (\ref{E:claim1}), we arrive at
		\begin{align*}
			\CMcal{S}_n(x^{\lambda})=\sum_{w\in H_{\lambda}}\sum_{\sigma\in G_{\lambda}}T_{w\sigma}x^{\lambda}=\sum_{w\in H_{\lambda}}T_w\sum_{\sigma\in G_{\lambda}}T_{\sigma}x^{\lambda}=v_{\lambda}(t)\sum_{\lambda(\gamma)=\lambda}\E_{\gamma}(t),
		\end{align*}
		which completes the proof of (\ref{E:pe11}) by (\ref{E:defP}).
	\end{proof}
	\begin{example}
		For $\lambda=(2,1,0)$ with $n=3$, we have
		\begin{align*}
			\E_{210}(t)&=x_1^2x_2,\quad \E_{120}(t)=x_1x_2^2,\\
			\E_{201}(t)&=x_1^2x_3,\quad \E_{102}(t)=x_1x_3^2+(1-t)x_1x_2x_3,\\
			\E_{021}(t)&=x_2^2x_3+(1-t)x_1x_2x_3,\quad \E_{012}(t)=x_2x_3^2+t(1-t)x_1x_2x_3.
		\end{align*}
		Consequently, $P_{21}(t)=\E_{210}(t)+\E_{120}(t)+ \E_{201}(t)+\E_{102}(t)+\E_{021}(t)+\E_{012}(t)=(x_1^2x_2+x_1x_2^2+x_1^2x_3+x_1x_3^2+x_2^2x_3+x_2x_3^2)+(2-t-t^2)x_1x_2x_3$.
	\end{example}
	
	\subsection{Plethystic substitution}
	Let $A$ be a Laurent series in indeterminates $a_1,a_2,\ldots$ over $\mathbb{Q}(t)$, and define $p_k[A]$ by substituting $a_i\mapsto a_i^k$ in $A$. The {\em \color{blue} plethystic substitution} of $A$ into a symmetric function $f$, denoted by $f[A]$, is obtained by extending the specialization $p_k\mapsto p_k[A]$ to $f$. If $A$ is a sum of indeterminates, $A=a_1+\cdots+a_n$, then $p_k[A]=p_k(a_1,\ldots,a_n)$ and hence for every symmetric function $f$, we have $f[A]=f(a_1,\ldots,a_n)$. In plethystic notation, it is convenient to define the {\em \color{blue} plethystic exponential} by
	\begin{align*}
		\Exp[X]=\exp\left(\sum_{k=1}^{\infty}\frac{p_k[X]}{k}\right),\,\,\mbox{ where }\,\,X=\sum_{i\ge 1}x_i.
	\end{align*}
	The following lemma gives the nonplethystic expression for $\Exp[XY(1-t)]$.
	\begin{lemma}\cite[Section 2]{Haiman:99}
		For any $X=(x_1,x_2,\ldots)$ and $Y=(y_1,y_2,\ldots)$,
		\begin{align}\label{E:omegaXY}
			\Exp[XY(1-t)]=\prod_{i,j\ge 1}\frac{1-tx_iy_j}{1-x_iy_j}.
		\end{align}
	\end{lemma}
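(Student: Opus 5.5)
We need to prove the identity $\Exp[XY(1-t)]=\prod_{i,j\ge 1}\frac{1-tx_iy_j}{1-x_iy_j}$. The plan is to compute the power sums of the plethystic argument directly from the definition and then exponentiate. Everything can be done as formal power series in the variables $x_i,y_j$ with coefficients in $\mathbb{Q}(t)$, so no convergence issues arise.

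First we determine $p_k[XY(1-t)]$. The alphabet $XY(1-t)$ is the formal expression $\sum_{i,j}x_iy_j-t\sum_{i,j}x_iy_j$. Here $t$ is treated as an indeterminate in the plethystic sense, so $p_k$ substitutes $t\mapsto t^k$ along with $x_i\mapsto x_i^k$ and $y_j\mapsto y_j^k$. Since $p_k[\,\cdot\,]$ is additive and multiplicative on monomials, we get
\begin{align*}
p_k[XY(1-t)]=(1-t^k)\sum_{i,j\ge1}x_i^ky_j^k .
\end{align*}

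Next we substitute this into the definition of $\Exp$. This gives
\begin{align*}
\Exp[XY(1-t)]=\exp\Bigl(\sum_{i,j}\sum_{k\ge1}\frac{(x_iy_j)^k}{k}-\sum_{i,j}\sum_{k\ge1}\frac{(tx_iy_j)^k}{k}\Bigr).
\end{align*}
We then apply the formal identity $\sum_{k\ge1}u^k/k=-\log(1-u)$ with $u=x_iy_j$ and with $u=tx_iy_j$. The exponent becomes
\begin{align*}
\sum_{i,j}\bigl(\log(1-tx_iy_j)-\log(1-x_iy_j)\bigr).
\end{align*}
Exponentiating termwise converts the sum over $(i,j)$ into a product, which yields (\ref{E:omegaXY}). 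This step is legitimate because $\exp(A+B)=\exp(A)\exp(B)$ holds for formal series without constant term, and the resulting infinite product converges formally: each monomial receives contributions from only finitely many factors.

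The only point needing care, and the main potential obstacle, is the convention for $t$. In the paper's definition, $p_k[A]$ is obtained by raising the indeterminates $a_i$ to the $k$-th power. Here $t$ must be regarded as one of these indeterminates, so that $p_k[t]=t^k$, rather than as a scalar, which would give $p_k[t]=t$. This is the standard convention of \cite{Haiman:99}, and it is exactly what produces the factor $1-tx_iy_j$ in the numerator.
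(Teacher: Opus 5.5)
Your argument is correct and is essentially the paper's. The paper derives $\Exp[A+B]=\Exp[A]\Exp[B]$, $\Exp[-A]=\Exp[A]^{-1}$ and $\Exp[x]=(1-x)^{-1}$ from additivity of $p_k$, then writes $\Exp[XY(1-t)]=\Exp[XY]\,\Exp[tXY]^{-1}$; this is your computation with the factorization done before taking logarithms rather than after.

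Your remark that $t$ must be treated as a plethystic letter ($p_k[t]=t^k$) is exactly what the paper's final step $\Exp[tXY]=\prod_{i,j}(1-tx_iy_j)^{-1}$ uses implicitly. The paper never says this, and its definition of plethysm speaks of Laurent series over $\mathbb{Q}(t)$, which would make $t$ a scalar.
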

	\begin{proof}
		Since $p_k[X+Y]=p_k[X]+p_k[Y]$ and $p_k[-X]=-p_k[X]$ by definition, it is immediate that
		\begin{align}\label{E:omega}
			\Exp[X+Y]=\Exp[X]\,\Exp[Y] \quad \mbox{ and }\quad \Exp[-X]=\Exp[X]^{-1}.
		\end{align}
		By noting that $$\Exp(x)=\Exp[x]=\exp\left(\sum_{k=1}^{\infty}\frac{x^k}{k}\right)=(1-x)^{-1}$$ for a single variable $x$, we can express $\Exp[X]$ in the basis of complete homogeneous symmetric functions. More explicitly, it follows from (\ref{E:omega}) that
		\begin{align*}
			\Exp[X]&=\prod_{i\ge 1}\Exp[x_i]=\prod_{i\ge 1}(1-x_i)^{-1}
		\end{align*}
		Together with the second equality in (\ref{E:omega}), we find $\Exp[XY(1-t)]=\Exp[XY]\Exp[tXY]^{-1}$, thus yielding (\ref{E:omegaXY}), as desired.
	\end{proof}
	\begin{definition}[{\em \color{blue} Dual bases}]\label{Def: dual}
		Two bases $\{u_{\lambda}(X;t)\}$ and $\{v_{\mu}(X;t)\}$
		{of the algebra of symmetric functions over $\mathbb{Q}(t)$, in infinitely many variables $X=(x_1,x_2,\ldots)$,
			are said to be mutually dual with respect to the Hall inner product $\langle\cdot,\cdot\rangle_t$ if
			$\langle u_\lambda(X;t),v_\mu(X;t)\rangle_t=\delta_{\lambda\mu}$.
			Here the inner product is defined on the power-sum basis by }
		\begin{align*}
			\langle p_{\lambda},p_{\mu}\rangle_t=z_{\lambda}\delta_{\lambda\mu}\prod_{i=1}^{\ell(\lambda)}(1-t^{\lambda_i})^{-1}, \,\, \mbox{ where }\, z_{\lambda}=\prod_{i\ge 1}i^{m_i}m_i!
		\end{align*}
		with $m_i=m_i(\lambda)$ counting the multiplicity of $i$ in $\lambda$. Equivalently, these two bases are dual if and only if
		\begin{align}\label{E:uv}
			\sum_{\lambda}u_{\lambda}(X;t)v_{\lambda}(Y;t)=\Exp[XY(1-t)]=\prod_{i,j\ge 1}\frac{1-tx_iy_j}{1-x_iy_j},
		\end{align}
		as follows from \cite[Chapter VI, (2.7)]{Mac95} and (\ref{E:omegaXY}).
	\end{definition}
	Now we introduce the dual basis of the Hall-Littlewood $P$-polynomials $P_{\lambda}(t)$.
	\begin{definition}[{\em \color{blue} Modified Hall-Littlewood polynomials}]
		For a partition $\lambda=(\lambda_1\ge \cdots\ge \lambda_{n}\ge 0)$, the Hall-Littlewood $Q$-polynomial $Q_{\lambda}(t)$ is given by {
			\begin{align*}
				Q_{\lambda}(t)=(1-t)^{l}\frac{v_{\lambda}(t)}{[n-l]_t!}P_{\lambda}(t),
		\end{align*} }
		where $l$ is the number of non-zero parts of $\lambda$ and $v_{\lambda}(t)\in \mathbb{Z}[t]$ is defined in (\ref{E:vlam}).
		The modified Hall-Littlewood polynomial $Q'_{\lambda}(t)$ is defined by
		\begin{align*}
			Q'_{\lambda}(t)=Q'_{\lambda}(X_n;t)=Q_{\lambda}\left[\frac{X_n}{1-t};t\right].
		\end{align*}
	\end{definition}
	\begin{theorem}[{\em \color{blue} Hall-Littlewood Cauchy identity}]\cite[Chapter III, a specialization of (4.4) at $q=0$]{Mac95}\label{T:CauchyHL}
		The two symmetric bases $\{P_{\lambda}(t)\}$ and $\{Q_{\lambda}(t)\}$ are dual bases, namely, by (\ref{E:uv}),
		\begin{align}\label{E:Q'P}
			\sum_{\lambda}Q'_{\lambda}(X_n;t)P_{\lambda}(Y_n;t)=\Exp[X_nY_n],
		\end{align}
		where the sum is taken over all partitions $\lambda=(\lambda_1\ge \cdots \ge \lambda_n\ge 0)$.
	\end{theorem}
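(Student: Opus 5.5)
The plan is to obtain (\ref{E:Q'P}) from Macdonald's Cauchy identity for Hall--Littlewood polynomials in infinitely many variables, by applying the plethystic substitution $X\mapsto X/(1-t)$ in the $X$-variables only, and then restricting both alphabets to $n$ variables.

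\textbf{Step 1: identify $Q_\lambda$ with Macdonald's normalization.} Write $l=\ell(\lambda)$, so that $m_0(\lambda)=n-l$. Since $v_\lambda(t)=[n-l]_t!\prod_{i\ge1}[m_i]_t!$, the definition above gives
\begin{align*}
Q_\lambda(t)=(1-t)^{l}\prod_{i\ge1}[m_i]_t!\,P_\lambda(t)=b_\lambda(t)P_\lambda(t),\qquad b_\lambda(t)=\prod_{i\ge1}\prod_{k=1}^{m_i}(1-t^k).
\end{align*}
The first equality uses $\sum_{i\ge1}m_i=l$, and the second regroups the factors $(1-t)$ into the products. This is exactly Macdonald's $Q_\lambda$, and it is independent of $n$.

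\textbf{Step 2: the infinite-variable Cauchy identity.} By \cite[Chapter III, (4.4)]{Mac95}, $\{P_\lambda\}$ and $\{Q_\lambda\}$ are dual for $\langle\cdot,\cdot\rangle_t$. By (\ref{E:uv}) and (\ref{E:omegaXY}) this means
\begin{align*}
\sum_\lambda Q_\lambda(X;t)P_\lambda(Y;t)=\Exp[XY(1-t)]=\exp\Big(\sum_{k\ge1}\frac{(1-t^k)p_k(X)p_k(Y)}{k}\Big).
\end{align*}
Both sides are formal power series that are finite in each bidegree.

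\textbf{Step 3: plethystic substitution in $X$.} Consider the $\mathbb{Q}(t)$-algebra homomorphism $\phi$ on symmetric functions in $X$ defined by $p_k(X)\mapsto p_k(X)/(1-t^k)$, which is the substitution $X\mapsto X/(1-t)$. Extend $\phi$ coefficientwise to $\Lambda_X\widehat\otimes\Lambda_Y$, treating the $Y$-power sums as scalars. This extension is legitimate because $\phi$ preserves degree in $X$, so it acts degree by degree. On the left it sends $Q_\lambda(X)P_\lambda(Y)$ to $Q'_\lambda(X)P_\lambda(Y)$. On the right it sends the exponent $\sum_k(1-t^k)p_k(X)p_k(Y)/k$ to $\sum_kp_k(X)p_k(Y)/k$. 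Because $\phi$ is a continuous ring homomorphism, it commutes with $\exp$, so the right-hand side becomes $\Exp[XY]$.

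\textbf{Step 4: restrict to $n$ variables.} Set $x_i=y_i=0$ for $i>n$.
\begin{itemize}
\item The specialization of $\Exp[XY]$ is $\Exp[X_nY_n]$.
\item By the stability of Hall--Littlewood polynomials, $P_\lambda(y_1,\dots,y_n,0,\dots)$ equals $P_\lambda(Y_n;t)$ when $\ell(\lambda)\le n$ and vanishes otherwise \cite[III.2]{Mac95}. Hence the sum runs only over $\lambda$ with $\ell(\lambda)\le n$.
\item Specializing the $x$'s commutes with $\phi$, because $\phi$ is defined on the power sums of $X$ and $p_k(X)\mapsto p_k(X_n)$ is compatible with it. So the first factor becomes $Q_\lambda[X_n/(1-t)]=Q'_\lambda(X_n;t)$, as in the definition.
\end{itemize}
This yields (\ref{E:Q'P}).

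\textbf{Main obstacle.} The delicate point is Step 3. One must check that the substitution acts on the $X$-side alone, and not on $XY$ jointly. It must therefore be carried out on the expanded product $\sum_\lambda Q_\lambda(X)P_\lambda(Y)$, using the power-sum form of $\Exp$ rather than the product form (\ref{E:omegaXY}). The remaining checks, namely the normalization in Step 1 (including $v_\lambda$ versus $b_\lambda$) and the vanishing of $P_\lambda$ for $\ell(\lambda)>n$ in Step 4, are routine.
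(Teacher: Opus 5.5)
Your proof is correct and takes essentially the route the paper intends. The paper gives no argument of its own: it cites Macdonald's duality of $\{P_\lambda\}$ and $\{Q_\lambda\}$, equivalently $\sum_\lambda Q_\lambda(X;t)P_\lambda(Y;t)=\Exp[XY(1-t)]$ via (\ref{E:uv}), and leaves the passage to $Q'_\lambda$ and to $n$ variables implicit. Your Steps 1--4 fill in exactly those omitted details: matching $(1-t)^{l}v_\lambda(t)/[n-l]_t!$ with $b_\lambda(t)$, substituting $X\mapsto X/(1-t)$ on the $X$-side only, and restricting to $X_n,Y_n$ using the stability and vanishing of $P_\lambda$.
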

	By (\ref{E:omegaXY}), we have {$\Exp[X_nY_n]=\prod_{i,j=1}^n(1-x_i y_j)^{-1}$, which is symmetric separately in $X_n$ and $Y_n$.}
	{Expanding $\Exp[X_nY_n]\E_\gamma(Y_n;t)$ in the $\E$-basis with respect to $Y_n$ gives coefficients that are symmetric
		in $X_n$, since the product is symmetric in $X_n$. We may therefore expand these coefficients in the modified
		Hall--Littlewood $Q'$-basis as follows.}
	\begin{definition}[{\em \color{blue}Coefficients $K_{\gamma\eta}(X_n;t)$}]\label{Def:Ea}
		For two weak compositions $\gamma,\eta\in \mathbb{N}^n$, set
		\begin{align}
			K_{\gamma\eta}(X_n;t)&=\langle\E_{\eta}(Y_n;t)\rangle(\Exp[X_nY_n]\E_{\gamma}(Y_n;t))\notag\\
			&=\sum_{\lambda}a_{\gamma\lambda}^{\eta}(t)Q'_{\lambda}(X_n;t).\label{E:Ka}
		\end{align}
		The second equality follows by multiplying both sides of (\ref{E:Q'P}) by $\E_{\gamma}(Y_n;t)$, employing (\ref{E:defa}) and interchanging the  sums degree by degree.
	\end{definition}
	\subsection{LLT polynomials}  Lascoux, Leclerc and Thibon \cite{LLT:97} introduced  LLT polynomials, motivated by the study of Fock space representations of quantum affine Lie algebras.
	
	A {\em \color{blue} semistandard Young tableau} (SSYT) of skew shape $\lambda/\mu$ is a function $T:\dg(\lambda/\mu)\rightarrow \mathbb{N}^+=\{1,2,\ldots\}$ such that $T(i,j)\le T(i,j+1)$ and $T(i,j)<T(i+1,j)$, namely, each row is weakly increasing from the left, and each column is strictly increasing from the top.  A {\em \color{blue} standard Young tableau} (SYT) of shape $\lambda/\mu$ is a function $T:\dg(\lambda/\mu)\rightarrow \{1,\ldots,|\lambda|-|\mu|\}$ such that $T(i,j)<T(i,j+1)$, $T(i,j)<T(i+1,j)$ and each integer from $1$ to $|\lambda|-|\mu|$ is used exactly once. We use $\SSYT(\lambda/\mu)$ and $\SYT(\lambda/\mu)$ to denote the sets of semistandard and standard Young tableaux of shape $\lambda/\mu$, respectively.
	
	\begin{definition}[{\em \color{blue} LLT polynomial}]\label{Def:attack}
		For a box $u=(i,j)$ of a skew diagram, let $c(u)=j-i$ be the {\color{blue} content} of $u$. Given a tuple $\bnu=(\nu^1,\ldots,\nu^{l})$ of skew diagrams, two boxes $x\in \nu^{i}$ and $y\in \nu^{j}$ form {\color{blue} an attacking pair} if and only if
		\begin{enumerate}
			\item $i<j$ and $c(x)=c(y)$, or
			\item $i>j$ and $c(x)+1=c(y)$.
		\end{enumerate}
		Let $\SSYT(\bnu)=\{\bT=(T^1,\ldots,T^l)\,\vert\, T^i\in \SSYT(\nu^i)\}$ be the set of tuples of semistandard Young tableaux of shape $\bnu$. {\color{blue} An attacking inversion} in the tuple $\bT$ is an attacking pair $(x,y)$ with $x\in \nu^i$ and $y\in \nu^j$ such that $T^i(x)>T^j(y)$. The number of attacking inversions in $\bT$ is denoted by $\inv(\bT)$. For each tableau $T^i$, let
		\begin{align*}
			x^{T^i}=\prod_{u\in \dg(\nu^i)}x_{T^i(u)}
		\end{align*}
		be the monomial corresponding to $T^i$, and let $x^{\bT}=x^{T^1}x^{T^2}\cdots x^{T^l}$. Then, the {\color{blue} LLT polynomial} $\CMcal{G}_{\bnu}(X;t)$ is defined by
		\begin{align*}
			\CMcal{G}_{\bnu}(X;t)=	\CMcal{G}_{\bnu}(x_1,x_2,\ldots;t)=\sum_{\bT\in \mathrm{SSYT}(\bnu)}t^{\inv(\bT)}x^{\bT}.
		\end{align*}
		In particular, if $\bnu=(\lambda/\mu)$ has only one skew diagram, then $\CMcal{G}_{\bnu}(X;t)=\CMcal{G}_{\bnu}(X;1)=s_{\lambda/\mu}(X)$ is the {\color{blue} skew Schur function} of skew diagram $\lambda/\mu$.
	\end{definition}
	\begin{example}
		For $\bnu=((2),(1))$, we have $\CMcal{G}_{\bnu}(X;t)=s_3(X)+ts_{21}(X)$.
	\end{example}

	Lascoux, Leclerc and Thibon \cite{LLT:97} proved that the LLT polynomials are symmetric. About a decade later, Grojnowski and Haiman \cite{GH:06} established the {\em \color{blue} Schur-positivity} of LLT polynomials, that is, each LLT polynomial has a positive expansion in terms of Schur functions. Their proof used affine Hecke algebras and Kazhdan-Lusztig theory.
	
	Let $\omega$ denote the endomorphism on the algebra of symmetric functions defined by $\omega(e_n)=h_n$ for $n\ge 1$. Since $\omega$ preserves multiplication, it follows that $\omega(e_{\lambda})=h_{\lambda}$. It was established in \cite[Theorem 7.6.1]{Stanley} that $\omega$ is an involution, meaning that $\omega(h_n)=e_n$ and consequently $\omega(h_{\lambda})=e_{\lambda}$. 
	For a tuple $\bnu$ of skew diagrams, let $\bnu'$ be obtained from $\bnu$ by transposing each skew diagram in $\bnu$ and reversing the tuple.
	Haglund, Haiman and Loehr \cite[Lemma 10.1 and its remark]{HHL04} proved that
	\begin{align}\label{E:omegaG}
		\CMcal{G}_{\bnu'}(X;t)=t^{m(\bnu)}\omega \CMcal{G}_{\bnu}(X;t^{-1})
	\end{align}
	where $m(\bnu)$ is the number of attacking pairs of boxes in $\bnu$.
	\begin{definition}[{\em \color{blue} $q$-Whittaker polynomial}]
		The $q$-Whittaker polynomial $W_{\lambda}(X;t)$ is obtained by applying $\omega$ to the modified Hall-Littlewood polynomial $Q'_{\lambda'}(X;t)$; that is,
		\begin{align}\label{E:qWhittaker}
			W_{\lambda}(X;t)=\omega\, Q'_{\lambda'}(X;t).
		\end{align}
	\end{definition}
	\begin{remark}
		The $q$-Whittaker polynomial $W_{\lambda}(X;q)$ is a specialization of the Macdonald polynomials $P_{\lambda}(X;q,t)$ at $t=0$, whereas the Hall-Littlewood polynomial $P_{\lambda}(X;t)$ is a specialization of the Macdonald polynomials $P_{\lambda}(X;q,t)$ at $q=0$.
	\end{remark}
	\subsection{From vertical strips to Schr\"oder paths}
	A {\em \color{blue} vertical-strip LLT polynomial} $\CMcal{G}_{\bnu}(X;t)$ is an LLT polynomial indexed by a tuple $\bnu$ of columns.
	
	
	Let $|\bnu|$ denote the total number of boxes in $\bnu$. Every vertical-strip LLT polynomial $\G_{\bnu}(X;t)$ of degree $|\bnu|$ can be realized as a symmetric polynomial indexed by a Schr\"{o}der path from $(0,0)$ to $(|\bnu|,|\bnu|)$; see \cite[Proposition 3.4]{AP} and \cite[Remark 3.6]{CM:18}. To this end, we introduce Schr\"oder paths and decorated unit-interval graphs.
	\begin{definition}[{\em \color{blue}Schr\"{o}der path}]
		A Schr\"oder path $p$ of length $n$  is a lattice path  from $(0,0)$ to $(n,n)$ with the north step
		$\mathrm N=(0,1)$, the east step $\mathrm E=(1,0)$ and the diagonal step $\mathrm D=(1,1)$ that stays weakly above the main diagonal $x=y$ and has no diagonal step $(1,1)$ lying on the main diagonal.
		
		Each Schr\"oder path $p$ can be represented as a word in $\mathrm{N}$, $\mathrm{E}$ and $\mathrm{D}$. See Figure \ref{fig:path1}. A {\color{blue} Dyck path} is a Schr\"oder path without any diagonal step $\mathrm D=(1,1)$.
		
	\end{definition}
	\begin{definition}[{\em \color{blue} Decorated unit-interval graph}]
		A {\color{blue} unit-interval graph} is a graph $\Gamma=([n],E)$ with vertex set $[n]$ and edge set $E$ such that $uw\in E$ implies that $uv,vw\in E$ for all $u<v<w$. A {\color{blue} decorated unit-interval graph} $([n],E,S)$ is a  unit-interval graph $\Gamma=([n],E)$ together with a subset $S\subseteq E$ of {\color{blue} strict edges}. The remaining edges in $E-S$ are {\color{blue} non-strict edges}.
	\end{definition}
	Throughout this paper, all edges are written as $ij$ with $i<j$. Given a Schr\"oder path $p$ of length $n$, we associate with it a decorated unit-interval graph $\Gamma_p = ([n], E, S)$ as follows. For $u,v\in [n]$ with $u < v$, there is a non-strict edge $uv\in E-S$ if and only if there is a square in column $u$ and row $v$ below the path $p$; there is a strict edge $uv\in S$ if and only if
	$(u,v)$ is the endpoint of a diagonal step of $p$. See Figure \ref{fig:path1} for an example.
	
	If $S=\emptyset$, then $\Gamma_p = ([n], E, \emptyset)$ reduces to a unit-interval graph, which uniquely corresponds to a Dyck path of length $n$.  A {\em \color{blue} coloring} of $\Gamma_p=([n],E,S)$ is a function $\kappa:[n]\to\mathbb{N}^{+}$ such that $\kappa(u)<\kappa(v)$ for all $u,v\in [n]$ such that $uv\in S$ with $u<v$. The number of {\em \color{blue} ascents} of coloring $\kappa$ is defined by
	\begin{align}\label{eq:asc}
		\asc(\kappa)=\#\{uv\in E-S:\kappa(u)<\kappa(v)\},
	\end{align}
	and the corresponding generating function is given by
	\begin{align}\label{eq:coloring1}
		G_p(X;t)=
		\sum_{\kappa}
		t^{\asc(\kappa)}\prod_{1\le i\le n}x_{\kappa(i)},
	\end{align}
	where the sum is taken over all colorings of $\Gamma_p$. Indeed, the polynomial $G_p(X;t)$ indexed by Schr\"oder paths equals the LLT polynomial indexed by vertical strips.
	
	\begin{proposition}\cite[Proposition 3.4]{AP}\cite[Remark 3.6]{CM:18}\label{prop:path-vertical}
		Let $\bnu$ be a tuple of columns with $|\bnu|=n$ boxes in total; then mark the boxes $u$ in $\bnu$ by integers from $1$ to $n$ in decreasing order
		of $c(u)$, with boxes $u$ of the same content $c(u)$ ordered by decreasing column index. Then there exists a Schr\"oder path $p=p(\bnu)$ such that
		\begin{align*}
			\G_{\bnu}(X;t)=G_p(X;t),
		\end{align*}
		and the corresponding decorated unit-interval graph $\Gamma_p=([n],E,S)$ satisfies
		\[
		\begin{aligned}
			E-S
			&=\{pq:p<q\,\mbox{ and }\,
			(q,p)\text{ is an attacking pair}\},\\
			S
			&=\{pq: p<q\mbox{ and }\,
			\raisebox{2pt}{\begin{ytableau}[] p \\ q \end{ytableau}}\, \text{ in a column of $\bnu$}\}.
		\end{aligned}
		\]
	\end{proposition}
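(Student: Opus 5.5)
The proof is a direct bijection between fillings and colorings, followed by a check that the resulting decorated graph has the shape required of $\Gamma_p$ for some Schr\"oder path $p$.

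\emph{The bijection.} Fix the labeling of the boxes of $\bnu$ by $1,\dots,n$ described in the statement. A tuple $\bT\in\SSYT(\bnu)$ of column tableaux is the same thing as a map $\kappa:[n]\to\mathbb{N}^+$, defined by $\kappa(u)=T^i(u)$, which strictly increases down every column. Write $p$ for the label of the box directly above box $q$ in the same column. Then $c(p)=c(q)+1$, so $p<q$ in the labeling. Strict increase down the columns is therefore equivalent to $\kappa(p)<\kappa(q)$ for all such vertically adjacent pairs; comparisons between non-adjacent boxes of a column follow by transitivity. This is exactly the condition that $\kappa$ be a coloring with respect to the strict edge set $S$ given in the statement. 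Clearly $x^{\bT}=\prod_i x_{\kappa(i)}$.

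\emph{Matching the statistics.} Let $(x,y)$ be an attacking pair with $x\in\nu^i$ and $y\in\nu^j$.
\begin{itemize}
\item If $i<j$ and $c(x)=c(y)$, then $y$ lies in the larger column, so $y$ receives the smaller label.
\item If $i>j$ and $c(y)=c(x)+1$, then $y$ has the larger content, so again $y$ receives the smaller label.
\end{itemize}
In both cases $y=p$ and $x=q$ with $p<q$. The pair is an attacking inversion precisely when $T(x)>T(y)$, that is, when $\kappa(p)<\kappa(q)$. This is an ascent of the non-strict edge $pq$. Hence $\inv(\bT)=\asc(\kappa)$, provided the non-strict edges are the pairs in $E-S$ as defined in the statement. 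Consequently $\G_{\bnu}(X;t)=\sum_\kappa t^{\asc(\kappa)}x^\kappa$, summed over colorings of the decorated graph $([n],E,S)$.

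\emph{Realizing the graph as $\Gamma_p$.} It remains to show that $([n],E,S)$ arises from a Schr\"oder path, and this is the main obstacle. I plan to compute the lower neighbourhood of each vertex explicitly. Take box $q$ with content $c$ in column $k$. The labels preceding $q$ are, in order, the content-$(c+1)$ boxes in columns $>k$, then the content-$(c+1)$ box in column $k$ (if it exists), then the content-$(c+1)$ boxes in columns $<k$, then the content-$c$ boxes in columns $>k$, and then $q$ itself. The boxes attacking $q$ from below in the labeling are exactly the content-$(c+1)$ boxes in columns $<k$ together with the content-$c$ boxes in columns $>k$. Thus the set $\{p<q: pq\in E\}$ is a contiguous interval $[a_q,q-1]$. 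Its smallest element $a_q$ is the box above $q$ in its column, when that box exists; that box is the endpoint of the unique strict edge at $q$.

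I will then verify that $a_q$ is weakly increasing in $q$. Within a fixed content, moving to the next label lowers the column index, which removes content-$(c+1)$ candidates from the front of the interval. Passing from content $c$ to content $c-1$ shifts the whole window forward. This monotonicity gives the unit-interval property. The boundary of $\{(u,v):uv\in E\}$ is then traced by a Dyck-type path, and the strict edges, sitting at the extreme corner $(a_q,q)$ of row $q$, are replaced by diagonal steps. A diagonal step cannot lie on the main diagonal, since $a_q<q$. This yields the Schr\"oder path $p(\bnu)$ with $\Gamma_p=([n],E,S)$. The hardest part will be checking carefully that a strict corner always occurs as a genuine peak $\mathrm{N}\mathrm{E}$ of the path, so that it can be replaced by $\mathrm{D}$.
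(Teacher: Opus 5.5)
Your proposal is correct. Note that the paper does not prove this proposition at all: it imports it from \cite[Proposition 3.4]{AP} and \cite[Remark 3.6]{CM:18}, so there is no internal argument to compare against. Your route has three steps. You identify $\SSYT(\bnu)$ with colorings via the content labeling, and you match attacking inversions with ascents on $E-S$. You then show that each lower neighbourhood is an interval $[a_q,q-1]$ with $a_q$ weakly increasing. This is the natural direct argument behind those references, and each step checks out, including the observation that every attacking pair $(x,y)$ has $y$ carrying the smaller label. One small wording point: the labels preceding $q$ also include every box of content $\ge c+2$. Your list describes only the final block before $q$, which is all that matters.

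The one step you flag but leave undone is easy to close with the same bookkeeping. Replacing the $\mathrm{N}$ step of row $q$ (at abscissa $a_q-1$) and the step after it by a single $\mathrm{D}$ step requires exactly $a_{q+1}>a_q$ whenever $q$ has a box $q^{\uparrow}$ above it. For $q=n$ an $\mathrm{E}$ step follows automatically, since $a_n<n$.
\begin{itemize}
\item If $q+1$ has the same content $c$, it lies in a column $k'<k$. Then $q^{\uparrow}$, a content-$(c+1)$ box in column $k>k'$, precedes the lower neighbourhood of $q+1$.
\item If $q+1$ has content $c-1$, its lower neighbourhood consists of boxes of content at most $c$. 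All of these are labelled after $q^{\uparrow}$.
\end{itemize}
Either way $a_{q+1}>q^{\uparrow}=a_q$. The same inequality, in the form $a_{q+1}-1\ge a_q$, lets consecutive diagonal steps fit together. The bound $a_q<q$ keeps each diagonal step off the main diagonal. So the resulting path is a Schr\"oder path with $\Gamma_p=([n],E,S)$, which completes your argument.
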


	\begin{example}
		Let $\bnu=\bigl((1,1)/(1),\,(1,1),\,(1,1),\,(1,1,1)/(1,1)\bigr)$.
		The labeling on the boxes of $\bnu$ is shown on the left of Figure \ref{fig:path1}.
		The contents $c(u)$ for boxes $u$ are $c(1)=c(2)=0$, $c(3)=c(4)=c(5)=-1$, and $c(6)=-2$. Proposition \ref{prop:path-vertical} shows that
		the decorated unit-interval graph $\Gamma_p=([6],E,S)$ has the non-strict edge set and the strict-edge set
		\begin{align*}
			E-S=\{12, 23, 34, 35, 36, 45, 46, 56\},\,\, S=\{13, 24\},
		\end{align*}
		respectively. See Figure \ref{fig:path1}.
	\end{example}
	\begin{figure}
		\begingroup
		\setlength{\unitlength}{5mm}
		\begin{center}
			\begin{picture}(23,7.6)(-0.4,-1.2)
				\linethickness{0.4pt}
				\put(1.65,3.625){\framebox(1.05,0.8){\large $2$}}
				\put(3.30,3.625){\framebox(1.05,0.8){\large $1$}}
				\put(0.00,2.600){\framebox(1.05,0.8){\large $5$}}
				\put(1.65,2.600){\framebox(1.05,0.8){\large $4$}}
				\put(3.30,2.600){\framebox(1.05,0.8){\large $3$}}
				\put(4.95,1.575){\framebox(1.05,0.8){\large $6$}}
				
				\put(7.6,0){%
					\begin{picture}(0,0)
						{\color{gray!65}\linethickness{0.3pt}
							\multiput(0,0)(1,0){7}{\line(0,1){6}}
							\multiput(0,0)(0,1){7}{\line(1,0){6}}
						}
						
						\put(0.5,0.5){\makebox(0,0){$1$}}
						\put(1.5,1.5){\makebox(0,0){$2$}}
						\put(2.5,2.5){\makebox(0,0){$3$}}
						\put(3.5,3.5){\makebox(0,0){$4$}}
						\put(4.5,4.5){\makebox(0,0){$5$}}
						\put(5.5,5.5){\makebox(0,0){$6$}}
						\put(0.5,1.5){\makebox(0,0){$12$}}
						\put(1.5,2.5){\makebox(0,0){$23$}}
						\put(2.5,3.5){\makebox(0,0){$34$}}
						\put(2.5,4.5){\makebox(0,0){$35$}}
						\put(3.5,4.5){\makebox(0,0){$45$}}
						\put(2.5,5.5){\makebox(0,0){$36$}}
						\put(3.5,5.5){\makebox(0,0){$46$}}
						\put(4.5,5.5){\makebox(0,0){$56$}}
						
						\linethickness{1.2pt}
						\polyline(0,0)(0,2)(1,3)(2,4)(2,6)(6,6)
						\put(0,0){\circle*{0.18}}
						\put(0,2){\circle*{0.18}}
						\put(1,3){\circle*{0.18}}
						\put(2,4){\circle*{0.18}}
						\put(2,6){\circle*{0.18}}
						\put(6,6){\circle*{0.18}}
						
						\linethickness{1.1pt}
						\Line(12.5,0.401924)(14,3)
						\Line(14,3)(12.5,5.598076)
						\Line(12.5,5.598076)(9.5,5.598076)
						\Line(12.5,5.598076)(8,3)
						\Line(12.5,5.598076)(9.5,0.401924)
						\Line(9.5,5.598076)(8,3)
						\Line(9.5,5.598076)(9.5,0.401924)
						\Line(8,3)(9.5,0.401924)
						
						{\color{red}\linethickness{1.1pt}
							\put(12.5,0.401924){\vector(0,1){4.806152}}
							\put(14,3){\vector(-1.732051,1){4.16225}}
						}
						
						\linethickness{0.4pt}
						\put(12.5,0.401924){\color{white}\circle*{0.78}}
						\put(12.5,0.401924){\color{black}\circle{0.78}}
						\put(12.5,0.401924){\makebox(0,0){\large $1$}}
						\put(14,3){\color{white}\circle*{0.78}}
						\put(14,3){\color{black}\circle{0.78}}
						\put(14,3){\makebox(0,0){\large $2$}}
						\put(12.5,5.598076){\color{white}\circle*{0.78}}
						\put(12.5,5.598076){\color{black}\circle{0.78}}
						\put(12.5,5.598076){\makebox(0,0){\large $3$}}
						\put(9.5,5.598076){\color{white}\circle*{0.78}}
						\put(9.5,5.598076){\color{black}\circle{0.78}}
						\put(9.5,5.598076){\makebox(0,0){\large $4$}}
						\put(8,3){\color{white}\circle*{0.78}}
						\put(8,3){\color{black}\circle{0.78}}
						\put(8,3){\makebox(0,0){\large $5$}}
						\put(9.5,0.401924){\color{white}\circle*{0.78}}
						\put(9.5,0.401924){\color{black}\circle{0.78}}
						\put(9.5,0.401924){\makebox(0,0){\large $6$}}
					\end{picture}%
				}
				
			\end{picture}
		\end{center}
		\endgroup
		\caption{A tuple $\bnu$ of columns (left), the corresponding Schr\"oder path $p=p(\bnu)=\mathrm{NNDDNNEEEE}$ (middle) and the decorated unit-interval graph $\Gamma_p$ (right).}\label{fig:path1}
	\end{figure}
	
	\subsection{Dyck paths and unicellular LLT polynomials}
	A {\em \color{blue} unicellular LLT polynomial} is a vertical-strip LLT polynomial $\G_{\bnu}(X;t)$ indexed by a tuple $\bnu$ of single cells; namely, $\G_{\bnu}(X;t)=G_p(X;t)$ for some Dyck path $p$ by Proposition \ref{prop:path-vertical}. Griffin, Mellit, Romero, Weigl and Wen \cite[Theorem 4.1]{GMRWW:25}, and, independently,  Basu and Bhattacharya \cite[Proposition 5.1]{BB}, derived a combinatorial formula for unicellular LLT polynomials in the $q$-Whittaker basis.
	
	We state this formula after introducing the necessary statistics on the decorated unit-interval graphs and Schr\"oder paths.
	\begin{definition}[{\em \color{blue}Statistics on paths and tableaux}]\label{def}
		Let $\lambda$ be a partition of $n$. Let $(p,T)$ be a pair, where $T$ is a standard Young tableau of shape $\lambda$  and $p$ is  a Schr\"oder path of length $n$. Let $(\row_T(j),\col_T(j))$ denote the box filled with $j$ in $T$, and let $\Gamma_p=([n],E,S)$ be the decorated unit-interval graph corresponding to $p$.
		Define $\SYT(p;\lambda)$ to be the set of standard Young tableaux $T$ of shape $\lambda$ satisfying both of the following conditions:
		\begin{enumerate}
			\item If $ij\in E-S$ with $i<j$, then $\col_T(i)\neq \col_T(j)$.
			\item If $ij\in S$ with $i<j$, then $\row_T(i)<\row_T(j)$.
		\end{enumerate}
		Define $\L(T)=\{j\in [n]\,\vert\, \row_T(j)>1\}$, i.e.,   the set of entries not in the top row,  and
		\[
		\inv(p,T)=\#\{ij\in E-S: \row_T(i)< \row_T(j)\,\text{ and }\, i<j\}.
		\]
		For each $j\in \L(T)$, we define
		\begin{equation*}
			\arm_{j}(p,T)=\#\left\{
			i<j\,\vert\,
			\row_T(i)=\row_T(j)-1, \, \col_T(i)>\col_T(j), \, ij\notin E-S
			\right\}.
			\label{eq:k}
		\end{equation*}
		Finally, we define $\gamma(p,T)=\{j\in[n]\,\vert\,ij\in S,\ \col_T(i)=\col_T(j)\}$.
	\end{definition}

	\begin{theorem}\cite[Proposition 5.1]{BB}\cite[Theorem 4.1]{GMRWW:25} \label{thm:unicellular}
		For a Dyck path $p$ of length $n$ and a partition $\lambda$ of $n$, we have
		\begin{equation}
			\langle W_\lambda(X;t)\rangle G_p(X;t)=
			\sum_{T\in\SYT(p;\lambda)}
			t^{\inv(p,T)}\prod_{j\in \L(T)}(1-t^{\arm_{j}(p,T)+1}).
			\label{eq:unicellular}
		\end{equation}
	\end{theorem}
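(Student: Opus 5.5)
We would prove (\ref{eq:unicellular}) by induction on $n$, building the Dyck path $p$ one vertex at a time and tracking the $W$-expansion through a Pieri-type rule for $q$-Whittaker polynomials.

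\textbf{Setup.} Let $p$ be a Dyck path of length $n$ and $\Gamma_p=([n],E,\emptyset)$ its unit-interval graph. By the unit-interval property, the neighbours of $n$ among smaller vertices form an interval $\{n-k,\ldots,n-1\}$. Let $p^-$ be the Dyck path of length $n-1$ with graph $\Gamma_p$ restricted to $[n-1]$. On the tableau side, removing the entry $n$ from $T\in\SYT(p;\lambda)$ gives a tableau $T^-\in\SYT(p^-;\mu)$, where $\mu=\lambda$ minus one corner box. Conversely, $T$ is obtained from $T^-$ by placing $n$ in an addable corner of $\mu$, in a column that contains none of $n-k,\ldots,n-1$. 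Under this step:
\begin{itemize}
\item $\inv(p,T)$ increases by the number of $i\in\{n-k,\ldots,n-1\}$ lying in rows strictly above $n$;
\item if $n$ lands in row $r>1$, a new factor $1-t^{\arm_n+1}$ appears, where $\arm_n$ counts the entries of row $r-1$ to the right of $n$'s column that are not adjacent to $n$;
\item all other arms are unchanged, since they involve only pairs $i<j$ with $j<n$.
\end{itemize}
So the right-hand side of (\ref{eq:unicellular}) satisfies an explicit recursion in $(\mu,\lambda)$ whose weights depend only on $k$ and on the positions of $n-k,\ldots,n-1$ in $T^-$.

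\textbf{Main step.} We would show that $G_p$ satisfies the same recursion in the $W$-basis. To do this we refine the statement: keep track of the colours of the last $k$ vertices. Concretely, we would prove a more general formula for a ``partially symmetric'' generating function in which the vertices $n-k+1,\ldots,n$ are distinguished, expanded in a suitable family refining $W_\mu$ (for instance, nonsymmetric Whittaker-type polynomials, or $W_\mu$ tensored with data recording the final $k$ entries). Attaching vertex $n$ with a colour $c$ contributes $t$ for each neighbour of smaller colour, i.e.\ each ascent. Summing over $c$ should then reproduce the $q$-Whittaker Pieri rule for multiplication by $e_1$, namely
\[
W_\mu\, e_1=\sum_{r}\bigl(1-t^{\mu'_{r-1}-\mu'_{r}}\bigr)\,W_{\mu+\epsilon_r},
\]
in the appropriate conjugate convention. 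The factor $1-t^{\mathrm{gap}}$ would be split according to which boxes of row $r-1$ are neighbours of $n$: non-neighbours enter the arm, while neighbours are absorbed into the $t$-power from $\inv$. The base cases are $n=1$, and $k=0$, where $G_p=G_{p^-}\cdot e_1$ and the recursion is exactly the Pieri rule.

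\textbf{Alternative route.} Pass through duality. By (\ref{E:qWhittaker}) and Theorem \ref{T:CauchyHL}, the coefficient $\langle W_\lambda\rangle G_p$ equals the coefficient of $Q'_{\lambda'}$ in $\omega G_p$. By (\ref{E:omegaG}), $\omega G_p(X;t)$ is, up to the monomial $t^{m}$ and the substitution $t\mapsto t^{-1}$, again a unicellular LLT polynomial. That coefficient can then be computed by pairing against $P_{\lambda'}$ and reducing to the Lusztig-type symmetrization (\ref{E:defP}).

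\textbf{Expected obstacle.} The hard step is making the Pieri rule compatible with the neighbour structure: only the last $k$ vertices interact with $n$, yet the Pieri coefficient depends on the full column lengths. We would first handle this by showing that the generating function satisfies Lee-type local linear relations, which exchange the edges $(n-k-1)n$ and $(n-k)n$. We would then check that the tableau sum satisfies the same relations, via a sign-reversing swap of the entries $n-k-1$ and $n-k$. Together these reduce the whole statement to the case $k=0$, where the Pieri rule applies directly.
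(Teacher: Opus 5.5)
The paper gives no proof of this statement; it quotes it from Basu--Bhattacharya and Griffin--Mellit--Romero--Weigl--Wen. Your proposal therefore has to stand on its own, and as written it does not.

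Your Setup is correct: the increment of $\inv$, the new arm of $n$, and the invariance of the other arms. The case $k=0$ is also correct. There $G_p=G_{p^-}e_1$, the new factor is $1-t^{\mu_{r-1}-\mu_r}$, and this is exactly the Pieri rule $W_\mu e_1=\sum_r(1-t^{\mu_{r-1}-\mu_r})W_{\mu+\epsilon_r}$, with coefficient $1$ when $r=1$. In the normalization $W_\mu=\omega Q'_{\mu'}$ these are row differences, not column differences. But that step only handles appending an isolated vertex. For $k\ge 1$ your Main step never specifies the refined family, the refined formula, or the identity that replaces the Pieri rule once $n$ has neighbours. That is where the entire content of the theorem lies, so this part is a plan rather than an argument.

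The concrete fallback fails. Put $u=n-k-1$ and $v=n-k$, and let $\Gamma_0\subset\Gamma_1\subset\Gamma_2$ differ only by the edges $vn$ and $un$. Sorting colourings by the relative order of $\kappa(u),\kappa(v),\kappa(n)$ shows that the modular (Lee-type) relation $(1+t)G_{\Gamma_1}=tG_{\Gamma_0}+G_{\Gamma_2}$ is equivalent to
\[
t\sum_{\kappa(v)<\kappa(n)\le\kappa(u)}t^{\asc_{\Gamma_0}(\kappa)}x^{\kappa}=\sum_{\kappa(u)<\kappa(n)\le\kappa(v)}t^{\asc_{\Gamma_0}(\kappa)}x^{\kappa},\qquad x^{\kappa}=\prod_i x_{\kappa(i)} .
\]
Swapping the colours of $u$ and $v$ proves this only when $u$ and $v$ have the same lower neighbours in $\Gamma_0$. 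Otherwise it is false. Take the path $1-2-3-4$ with $n=4$, $k=1$, so $\Gamma_0=\{12,23\}$, $\Gamma_1=\{12,23,34\}$, $\Gamma_2=\{12,23,24,34\}$. The coefficient of $x_1x_2x_3x_4$ in $(1+t)G_{\Gamma_1}-tG_{\Gamma_0}-G_{\Gamma_2}$ is $2t(1-t)^2\neq 0$. So you cannot push the neighbourhood of the last vertex down to $k=0$. Even where the relation holds, it is a second-order recurrence in $k$, which leaves the non-Pieri case $k=1$ as an extra base case.

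The genuine modular law of Lee and Abreu--Nigro is applied at twin positions anywhere in the path, and it reduces only to concatenations of cliques. A proof along these lines would therefore need two things you do not supply:
\begin{itemize}
\item the $W$-expansion of products $W_{(a_1)}\cdots W_{(a_r)}$ (note $G_{K_a}=W_{(a)}$), i.e.\ a Pieri rule for $W_{(a)}$ rather than for $e_1$;
\item an independent proof that the right-hand side of (\ref{eq:unicellular}) obeys the modular law.
\end{itemize}
The ``alternative route'' via pairing with $P_{\lambda'}$ merely restates the coefficient without producing the tableau sum.
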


	\section{Proof of Theorem \ref{T:main}, Part I}\label{S:3}
	The purpose of this section is to establish (\ref{E:productEP}) of Theorem \ref{T:main}. 
	Before we proceed, we need to introduce some notation and a preparatory lemma.
	
	Let $\tau:\mathbb{Q}[x_1^{\pm 1},\ldots,x_n^{\pm 1}]\rightarrow \mathbb{Q}[x_1^{\pm 1},\ldots,x_n^{\pm 1}]$
	be the {\em \color{blue} reciprocal-reversal involution} defined by $$\tau f(x_1,\ldots,x_n)=f(x_n^{-1},\ldots,x_1^{-1}).$$ Noting that the proof is entirely analogous to the one for Lemma \ref{E:etof}, we omit the details.
	\begin{lemma}\label{lem:tau}
		For every $\sigma\in \mathbb{Z}^n$, we have 
		\begin{align*}
			\tau F_{\sigma}(t)=F_{-w_0\sigma}(t)=F_{-\sigma^r}(t).
		\end{align*}
	\end{lemma}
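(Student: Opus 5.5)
The plan is to follow the proof of Lemma \ref{E:etof} line by line, with $w_0$ replaced by $\tau$. Write $\tau=\iota\circ w_0$, where $\iota$ is the inversion $x_i\mapsto x_i^{-1}$ and $w_0$ reverses the variables. Since $\iota$ and $w_0$ commute, $\tau$ is an involutive ring automorphism of the Laurent polynomial ring. It fixes $t$, since $t$ plays no role in its definition. Let $\sigma\in\mathbb{Z}^n$ and $\mu=\lambda(\sigma)$, and let $v$ be the shortest permutation with $\sigma=v\mu^r$, so that $F_\sigma(t^{-1})=\overline{T_v}\,x^{\mu^r}$. It suffices to prove the identity with $t$ replaced by $t^{-1}$, because $\tau$ commutes with the substitution $t\mapsto t^{-1}$.

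\textbf{Step 1 (the monomial).} We have $\tau x^{\mu^r}=x^{-w_0\mu^r}=x^{-\mu}$. The reverse of $-\mu$ is $-\mu^r$, which is weakly decreasing, so $\lambda(-\mu)=-\mu^r$ and $(-\mu)$ plays the role of $\lambda(\cdot)^r$ for the index $-w_0\sigma$. Hence $x^{-\mu}$ is the correct starting monomial in Definition \ref{Def:F} for $F_{-w_0\sigma}$. Here it is essential that Definition \ref{Def:F} is stated for all $\sigma\in\mathbb{Z}^n$.

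\textbf{Step 2 (conjugating the operators).} I will compute $\tau\,\overline{T_i}\,\tau$ using $\tau s_i\tau=s_{n-i}$. The operator $\overline{T_i}$ is $T_i$ with $t\mapsto t^{-1}$ and $x\mapsto x^{-1}$. Conjugating by $\iota$ undoes the variable inversion, and conjugating by $w_0$ turns index $i$ into $n-i$, exactly as in (\ref{E:Ti}) but without the inversion. I therefore expect
\[
\tau\,\overline{T_i}(t)\,\tau=\overline{T_{n-i}}(t).
\]
I will verify this carefully, because the factor $\frac{(1-t^{-1})x_{i+1}^{-1}}{x_{i+1}^{-1}-x_i^{-1}}$ changes under the reversal: the roles of $x_i$ and $x_{i+1}$ swap when $i$ becomes $n-i$. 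If the result comes out as $\overline{T_{n-i}}^{-1}$ times a scalar, or as $\overline{T_{n-i}}+$const, I will use the quadratic relation to repair it. Checking that this conjugation lands on the same family $\overline{T}$, and not on its inverse, is the main obstacle. Once it holds, a reduced word $v=s_{i_1}\cdots s_{i_r}$ gives $\tau\overline{T_v}\tau=\overline{T_{w_0vw_0}}$.

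\textbf{Step 3 (minimal length).} Set $v'=w_0vw_0$. Then
\[
v'(-\mu)=w_0v w_0(-w_0\mu^r)=-w_0 v\mu^r=-w_0\sigma,
\]
and $\ell(v')=\ell(v)$. Minimality transfers in both directions by the same conjugation argument, since $\tau$ is an involution. Hence $v'$ is the unique shortest permutation carrying $\lambda(-w_0\sigma)^r$ to $-w_0\sigma$.

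\textbf{Conclusion.} Combining the steps gives
\[
\tau F_\sigma(t^{-1})=\overline{T_{v'}}\,x^{-\mu}=F_{-w_0\sigma}(t^{-1}).
\]
Replacing $t^{-1}$ by $t$ yields the lemma, and $-w_0\sigma=-\sigma^r$ by definition.

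If the direct verification in Step 2 fails, I will fall back on the Lusztig-type symmetrization. Alternatively, I will check the identity on $\sigma$ antidominant, where $F_\sigma$ is a monomial, and propagate it through the intertwining relations $\overline{T_i}F_\sigma=F_{s_i\sigma}$, valid when $\sigma_i<\sigma_{i+1}$. These relations are compatible with $\tau$, because $\tau$ converts the condition $\sigma_i<\sigma_{i+1}$ into the corresponding condition on $-w_0\sigma$ at position $n-i$.
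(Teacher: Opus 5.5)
Your proposal is correct and matches the approach the paper intends: the paper omits this proof as ``entirely analogous'' to Lemma \ref{E:etof}, which amounts to conjugating the operators, identifying the starting monomial $x^{-\mu}$, and transferring minimality via $v\mapsto w_0vw_0$. The step you flagged as the main obstacle holds with no repair needed: since $\overline{T_i}(t)=\iota\,T_i(t^{-1})\,\iota$ and $\tau=w_0\iota$, one gets $\tau\,\overline{T_i}(t)\,\tau=w_0\,T_i(t^{-1})\,w_0=\overline{T_{n-i}}(t)$, which lies in the same family and not its inverse.
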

	We now turn to one-row diagrams. For integers $a\geq b$, let $R=a/b=\{(1,j):j\in\mathbb Z,\ b<j\leq a\}$.
		Thus $R$ is a row of consecutive boxes with contents $b,b+1,\ldots,a-1$.
		If $a=b$, then $R=a/a=\emptyset$.
		A {\em \color{blue} horizontal-strip tuple}
		$\bnu=(R_1,\ldots,R_m)$ is a tuple of such rows.
	
	\begin{definition}[{\em \color{blue} $h$-triple}]\label{Def:triples}
		Let $\bnu=(\nu^1,\ldots,\nu^l)$ be a tuple of skew diagrams. Adjoining the boxes to the left and right of a row gives an extended row. This includes the case of the two boxes immediately to  the left and right of an empty row $\emptyset$. A triple $(x,y,z)$ of $\bnu$ consists of three boxes $x$, $y$ and $z$ such that
		\begin{enumerate}
			\item $x$ and $z$ are adjacent boxes in an extended row of some component $\nu^j$, with $x$ left of $z$;
			\item $y$ is a box in $\nu^i$ for some $i<j$;
			\item the contents of these boxes satisfy $c(y)=c(z)$.
		\end{enumerate}
		The number of triples of $\bnu$ is denoted by $h(\bnu)$.
	\end{definition}
	
	\begin{proposition}\cite[Corollary 6.5.8]{BHMPPS:25}\label{cor:thmB1}
		For two weak compositions $\gamma,\eta\in\mathbb{N}^{n}$ with $\gamma\le \eta$, let  $\bnu=\eta/\gamma$ be a sequence of rows. Then, we have
		\begin{align*}
			\langle F_{\gamma}(Y_n;t^{-1})\rangle \Exp[X(y_1^{-1}+\cdots +y_{n}^{-1})]F_{\eta}(Y_n;t^{-1})=t^{-h({\bnu})}G_{\bnu}(X;t).
		\end{align*}
	\end{proposition}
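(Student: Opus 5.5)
The plan is to prove the identity one $X$-variable at a time and then assemble the general case by branching. Write $X=x_1+x_2+\cdots$. Since $\Exp[A+B]=\Exp[A]\Exp[B]$, we have
\[
\Exp\bigl[X(y_1^{-1}+\cdots+y_n^{-1})\bigr]=\prod_{k\ge 1}\Exp\bigl[x_k(y_1^{-1}+\cdots+y_n^{-1})\bigr],
\]
and each factor equals $\prod_{i}(1-x_k/y_i)^{-1}=\sum_{m\ge0}x_k^m h_m(y_1^{-1},\ldots,y_n^{-1})$. So the first step is a one-variable statement, a Pieri rule for $F$. For weak compositions $\zeta,\eta$ I want
\[
\langle F_{\zeta}(Y_n;t^{-1})\rangle\, h_m(Y_n^{-1})F_{\eta}(Y_n;t^{-1})=
\begin{cases} t^{-h(\eta/\zeta)} & \text{if } \zeta\le\eta,\ |\eta|-|\zeta|=m,\ \eta/\zeta \text{ admissible},\\ 0&\text{otherwise}.\end{cases}
\]
Here "admissible" means that $\eta/\zeta$ is a tuple of rows that can be filled by a single letter, i.e.\ an SSYT with one value. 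For a tuple of rows every one-letter filling is semistandard, so the one-variable specialization of $G_{\eta/\zeta}$ is exactly $x^{|\eta/\zeta|}$, since equal letters give no attacking inversions. This matches the claimed right-hand side in one variable.

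To prove the one-variable Pieri rule I would exploit that $h_m(Y_n^{-1})$ is symmetric, so it commutes with every $\overline{T_i}$. Write $F_\eta(t^{-1})=\overline{T_v}\,x^{\mu^r}$. First I treat the antidominant case $\eta=\mu^r$, where $h_m(Y^{-1})x^{\mu^r}$ is an explicit sum of monomials. I then apply Lemma \ref{lem:tau} to move to the reciprocal variables, so that the question becomes one about positive exponents. Next I induct on $\ell(v)$ using $\overline{T_i}$. I use the quadratic relation together with the action of $\overline{T_i}$ on $F_\zeta$: it sends $F_\zeta$ to $F_{s_i\zeta}$ when $\zeta_i<\zeta_{i+1}$, and acts by an explicit $2\times 2$ matrix in the pair $F_\zeta,F_{s_i\zeta}$ otherwise. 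The task is to check that the exponent $-h(\eta/\zeta)$ transforms correctly when $\eta$ and $\zeta$ are acted on by $s_i$. The count $h$ of triples changes only through the triples whose boxes lie in rows $i$ and $i+1$, so this reduces to a local case analysis on the two rows.

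The second step assembles the variables. Iterating over $x_1,\dots,x_N$ expresses the left-hand side as a sum over chains $\gamma=\zeta^{(0)}\le\zeta^{(1)}\le\cdots\le\zeta^{(N)}=\eta$, with weight $\prod_k t^{-h(\zeta^{(k)}/\zeta^{(k-1)})}x_k^{|\zeta^{(k)}/\zeta^{(k-1)}|}$. Such chains are exactly the SSYT fillings $\bT$ of the row tuple $\eta/\gamma$, where the boxes of $\zeta^{(k)}/\zeta^{(k-1)}$ receive the letter $k$. It therefore remains to prove the additivity identity
\[
h(\eta/\gamma)-\sum_k h\bigl(\zeta^{(k)}/\zeta^{(k-1)}\bigr)=\inv(\bT).
\]
I would prove this by comparing triples $(x,y,z)$ of $\eta/\gamma$ with the triples of the individual layers. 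A triple that is not a triple of any single layer arises because $z$, or the extension box $x$, and $y$ carry different letters. I expect such triples to correspond bijectively to attacking inversions, with $y$ playing the role of one box and the box adjacent to the content-$c(y)$ position in row $j$ playing the other. The two cases of the attacking-pair definition should match whether $y$ sits at the content of $z$ or of $x$.

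I expect the main obstacle to be the local two-row analysis in the inductive step of the Pieri rule. The action of $\overline{T_i}$ when $\zeta_i=\zeta_{i+1}$ or $\zeta_i>\zeta_{i+1}$ produces mixed terms, and I must show that the resulting coefficients collapse to a single power of $t$ supported only on admissible $\zeta$. If the direct induction becomes unwieldy, I would fall back on a combinatorial proof: expand $F_\eta$ via nonattacking (HHL-type) skyline fillings, multiply by monomials in $y^{-1}$, and use a weight-preserving bijection.
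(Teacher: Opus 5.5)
The paper does not prove this statement; it quotes it from \cite[Corollary 6.5.8]{BHMPPS:25}. Your argument therefore has to stand on its own, and most of it does.

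\textbf{What works.} The factorization of $\Exp[X(y_1^{-1}+\cdots+y_n^{-1})]$ is fine, as is the identification of chains $\gamma=\zeta^{(0)}\le\cdots\le\zeta^{(N)}=\eta$ with fillings $\bT$. The additivity $h(\eta/\gamma)-\sum_k h(\zeta^{(k)}/\zeta^{(k-1)})=\inv(\bT)$ is also correct. Indeed, $h(\eta/\gamma)$ counts pairs $(y,j)$ with $y$ in a row $i<j$ and $\gamma_j\le c(y)\le\eta_j$. Such a pair, with $y$ carrying letter $k$, is missed by the layer counts exactly when $c(y)<\zeta^{(k-1)}_j$ or $c(y)>\zeta^{(k)}_j$. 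These two cases are the attacking inversions of $y$ with the box of content $c(y)$, respectively $c(y)-1$, in row $j$. This is the same matching the paper uses in the proof of Theorem \ref{prop:LLTtoW} to get $\inv(\bT)\le h(\bnu)$.

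The inductive step you single out as the main obstacle is in fact routine. Write $c_\zeta=\langle F_\zeta(Y_n;t^{-1})\rangle h_m(Y^{-1})F_\eta(Y_n;t^{-1})$ and take $\eta_i<\eta_{i+1}$. The coefficient of $F_{\zeta'}$ in $\overline{T_i}\bigl(h_m(Y^{-1})F_\eta\bigr)$ is $t^{-1}c_{s_i\zeta'}$, $t^{-1}c_{\zeta'}$, or $c_{s_i\zeta'}+(t^{-1}-1)c_{\zeta'}$, according as $\zeta'_i<\zeta'_{i+1}$, $\zeta'_i=\zeta'_{i+1}$, or $\zeta'_i>\zeta'_{i+1}$. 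Counting the contribution of rows $i,i+1$ to $h$ closes every case, since all other pairs of rows contribute equally. This includes the vanishing for $\zeta'\not\le s_i\eta$.

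\textbf{The gap: the base case.} You treat it as ``an explicit sum of monomials.'' For weakly increasing $\eta$ one has $h_m(Y^{-1})y^{\eta}=\sum_{|a|=m}y^{\eta-a}$. But $y^{\eta-a}$ equals $F_{\eta-a}$ only when $\eta-a$ is weakly increasing. All other monomials must be re-expanded in the $F$-basis, and the coefficients $t^{-h(\eta/\zeta)}$ appear only after cancellation.

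Already for $n=2$, $\eta=(0,1)$ and $m=3$ this happens. Write $F_\zeta=F_\zeta(Y_2;t^{-1})$. One needs $y_2^{-2}=F_{(0,-2)}-(1-t^{-1})F_{(-1,-1)}$ to obtain
\[
h_3(y_1^{-1},y_2^{-1})\,y_2=F_{(-3,1)}+F_{(-2,0)}+t^{-1}F_{(-1,-1)}+F_{(0,-2)},
\]
where the factor $t^{-1}$ matches $h\bigl((0,1)/(-1,-1)\bigr)=1$. Passing to reciprocal variables via Lemma \ref{lem:tau} does not touch this issue.

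For constant $\eta$ you could get the base case from $h_m=\sum_{\mu}t^{n(\mu)}P_\mu(t)$, Proposition \ref{prop:PtoE}, and Lemmas \ref{E:etof} and \ref{lem:tau}, since then $h(\eta/\zeta)=n(\lambda(\eta-\zeta))$. But your $\overline{T_i}$-induction only permutes the parts of $\eta$, so it cannot reach non-constant antidominant $\eta$ from there.

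What remains is therefore exactly the antidominant one-variable case of the proposition, and for it you need a genuine tool. Two possibilities are an explicit monomial expansion of $F_\zeta(Y;t^{-1})$ for non-antidominant $\zeta$ (for instance an HHL or Ram--Yip type formula at $q=0$) together with a cancellation argument, or a nonsymmetric Cauchy identity from which the coefficients can be read off. Your fallback ``weight-preserving bijection'' on HHL-type fillings is unspecified, and it is precisely this missing piece.
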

	Proposition \ref{cor:thmB1} enables us to relate the coefficients $a_{\gamma\lambda}^{\eta}(t)$ defined in (\ref{E:defa}) to the expansion of vertical-strip LLT polynomials.
	\begin{theorem}\label{prop:LLTtoW}
		For two weak compositions $\gamma,\eta\in \mathbb{N}^n$ with $\gamma\le\eta$, define
		\begin{align*}
			\bnu=\bnu(\gamma,\eta)=((-\gamma_1)/(-\eta_1),\cdots, (-\gamma_n)/(-\eta_n)).
		\end{align*}
		Then, we have
		\begin{align}\label{E:aQ'=LLT}
			\sum_{\lambda}a_{\gamma\lambda}^{\eta}(t)Q'_{\lambda}(X_n;t)=t^{h(\bnu)}G_{\bnu}(X_n;t^{-1}),
		\end{align}
		where $t^{h(\bnu)}G_{\bnu}(X_n;t^{-1})$ is an $\mathbb{N}[t]$-polynomial.
		{Consequently,}
		\begin{align}\label{E:GW1}
			\sum_{\lambda}a_{\gamma\lambda}^{\eta}(t)W_{\lambda'}(X_n;t)=t^{h(\bnu)-m(\bnu)}G_{\bnu'}(X_n;t).
		\end{align}
	\end{theorem}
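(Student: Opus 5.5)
We will start from Definition \ref{Def:Ea}, which gives
\[\sum_{\lambda}a_{\gamma\lambda}^{\eta}(t)Q'_{\lambda}(X_n;t)=K_{\gamma\eta}(X_n;t)=\langle\E_{\eta}(Y_n;t)\rangle\bigl(\Exp[X_nY_n]\E_{\gamma}(Y_n;t)\bigr).\]
The plan is to rewrite this coefficient so that Proposition \ref{cor:thmB1} applies directly.

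The first step converts $\E$ into $F$. By Lemma \ref{E:etof}, $\E_\gamma(Y_n;t)=F_{\gamma^r}(Y_n^r;t)$. Reversing the $Y$ variables leaves $\Exp[X_nY_n]$ unchanged, so $K_{\gamma\eta}$ equals the coefficient of $F_{\eta^r}(Y_n;t)$ in $\Exp[X_nY_n]F_{\gamma^r}(Y_n;t)$.

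The second step moves to reciprocal variables and the inverse parameter. We apply the involution $\tau$ together with $t\mapsto t^{-1}$; that is, we invert and reverse $Y$. This turns $\Exp[X_nY_n]$ into $\Exp[X(y_1^{-1}+\cdots+y_n^{-1})]$. By Lemma \ref{lem:tau}, after also substituting $t\mapsto t^{-1}$, the function $F_{\gamma^r}(Y;t)$ becomes $F_{-\gamma}(Y;t^{-1})$, and likewise $F_{\eta^r}$ becomes $F_{-\eta}$. We therefore obtain
\[K_{\gamma\eta}(X_n;t^{-1})=\langle F_{-\eta}(Y;t^{-1})\rangle\,\Exp[X(y_1^{-1}+\cdots+y_n^{-1})]F_{-\gamma}(Y;t^{-1}).\]
Since $-\eta\le-\gamma$, Proposition \ref{cor:thmB1} applies (we take it to hold for sequences in $\mathbb{Z}^n$, which one can reach by shifting all exponents by a constant, since multiplying by $(y_1\cdots y_n)^c$ commutes with the $T_i$). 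The skew row tuple is then $(-\gamma)/(-\eta)=\bnu(\gamma,\eta)$, and the right-hand side equals $t^{-h(\bnu)}G_{\bnu}(X;t)$. Substituting $t\mapsto t^{-1}$ back yields (\ref{E:aQ'=LLT}).

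Polynomiality in $\mathbb{N}[t]$ follows because every LLT polynomial has degree in $t$ at most $m(\bnu)$. We also need $h(\bnu)\ge\inv(\bT)$ for every $\bT$; this we would read off from the left side, which is a polynomial in $t$, or argue directly that each attacking inversion is dominated by a triple. This is the most delicate bookkeeping.

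For (\ref{E:GW1}), we apply $\omega$ to (\ref{E:aQ'=LLT}). By (\ref{E:qWhittaker}), $\omega Q'_\lambda=W_{\lambda'}$. By (\ref{E:omegaG}), $\omega G_{\bnu}(X;t^{-1})=t^{-m(\bnu)}G_{\bnu'}(X;t)$. Combining the two gives $t^{h(\bnu)-m(\bnu)}G_{\bnu'}(X_n;t)$.

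The main obstacle is the second step. There we must check that the combined operations $\tau$ and $t\mapsto t^{-1}$ send the coefficient extraction $\langle F_{\eta^r}\rangle$ to $\langle F_{-\eta}\rangle$. This requires $\tau$ to map the $F$-basis to the $F$-basis bijectively as in Lemma \ref{lem:tau}. We must also track the indices carefully, so that the resulting tuple is exactly $\bnu(\gamma,\eta)$ in the stated order and not its reverse.
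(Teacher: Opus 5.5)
Your derivation of (\ref{E:aQ'=LLT}) and (\ref{E:GW1}) follows the paper's route. You reverse $Y_n$ using Lemma~\ref{E:etof}, apply $\tau$ and Lemma~\ref{lem:tau}, invoke Proposition~\ref{cor:thmB1} for $-\eta\le-\gamma$, and finish with $\omega$ via (\ref{E:omegaG}) and (\ref{E:qWhittaker}). Performing $t\mapsto t^{-1}$ before rather than after invoking the proposition changes nothing. Your shift by $(y_1\cdots y_n)^c$ is a correct way to apply Proposition~\ref{cor:thmB1} to nonpositive sequences, which the paper does silently: it sends $F_\sigma$ to $F_{\sigma+c}$ and only translates all contents of $\bnu$ uniformly, so $G_{\bnu}$ and $h(\bnu)$ are unchanged.

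The gap is the assertion that $t^{h(\bnu)}G_{\bnu}(X_n;t^{-1})$ lies in $\mathbb{N}[t]$, i.e.\ that $\inv(\bT)\le h(\bnu)$ for every $\bT\in\SSYT(\bnu)$.

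\begin{itemize}
\item Your first argument proves the wrong bound. The fact $\inv(\bT)\le m(\bnu)$ only makes $t^{m(\bnu)}G_{\bnu}(X;t^{-1})$ polynomial, and $h(\bnu)<m(\bnu)$ does occur: in Example~\ref{exam2}, $h=5$ and $m=7$.
\item Reading polynomiality off the left side, as you suggest, needs an independent proof that $K_{\gamma\eta}(X_n;t)$ has coefficients in $\mathbb{Z}[t]$. The $a_{\gamma\lambda}^{\eta}(t)$ are defined as expansion coefficients and are a priori only in $\mathbb{Q}(t)$; the paper gets their polynomiality only downstream of this theorem. You could obtain it from the unitriangularity of $\{\E_\eta(Y_n;t)\}$ with respect to monomials over $\mathbb{Z}[t]$. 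You would then still have to note that each $\bT$ contributes $+t^{h(\bnu)-\inv(\bT)}x^{\bT}$, so no cancellation can hide a negative power. You state neither step.
\item The direct route you allude to is the one the paper carries out, by an explicit injection. For rows $R_i,R_j$ with $i<j$:
\begin{itemize}
\item send an attacking pair $(y,z)$ with $y\in R_i$, $z\in R_j$, $c(y)=c(z)$ to the triple $(x,y,z)$, where $x$ is the left neighbour of $z$ in the extended row;
\item send an attacking pair $(x,y)$ with $x\in R_j$, $c(y)=c(x)+1$ to the triple $(x,y,z)$, where $z$ is the right neighbour of $x$.
\end{itemize}
Each triple receives at most these two pairs. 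Since $T^j(x)\le T^j(z)$, the inversion conditions $T^i(y)>T^j(z)$ and $T^j(x)>T^i(y)$ cannot both hold, so $\inv(\bT)\le h(\bnu)$.
\end{itemize}
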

	\begin{proof}
		By Definition \ref{Def:Ea}, $K_{\gamma\eta}(X_n;t)=\langle\E_{\eta}(Y_n;t)\rangle (\Exp[X_nY_n]\E_{\gamma}(Y_n;t))$.
		We apply the reversal automorphism $w_0=n\,n-1\,\cdots 1$ to the variables $Y_n=(y_1,\ldots,y_n)$. Since $Y_n=y_1+\cdots +y_n$ is invariant under $w_0$, and by (\ref{E:EtoF}), we obtain
		\begin{align*}
			K_{\gamma\eta}(X_n;t)&=\langle\E_{\eta}(Y_n^r;t)\rangle (\Exp[X_nY_n]\E_{\gamma}(Y_n^r;t))\\
			&=\langle F_{w_0\eta}(Y_n;t)\rangle(\Exp[X_nY_n]F_{w_0\gamma}(Y_n;t))\\
			&=\langle F_{\eta^r}(Y_n;t)\rangle(\Exp[X_nY_n]F_{\gamma^r}(Y_n;t))
		\end{align*}
		By applying $\tau$ to the Laurent polynomials in the variable $Y_n$ on the right-hand side and using Lemma \ref{lem:tau}, we obtain
		\begin{align*}
			K_{\gamma\eta}(X_n;t)&=\langle F_{-\eta}(Y_n;t)\rangle\,\Exp[X(y_1^{-1}+\cdots +y_n^{-1})]F_{-\gamma}(Y_n;t)\\
			&=t^{h(\bnu)}G_{\bnu}(X_n;t^{-1})
		\end{align*}
		by Proposition \ref{cor:thmB1}. This yields (\ref{E:aQ'=LLT}) by (\ref{E:Ka}).
		Note that $G_{\bnu'}(X_n;t)$ is a vertical-strip LLT polynomial. We will show that $t^{h(\bnu)}G_{\bnu}(X_n;t^{-1})$ is an $\mathbb{N}[t]$-polynomial, i.e.,
		\begin{align*}
			h(\bnu)\ge \inv(\bT)
		\end{align*}
		for any $\bT\in \SSYT(\bnu)$. Write $\bnu=(R_1,\ldots,R_n)$, where $R_i=(-\gamma_i)/(-\eta_i)$ is a single row. Let $\bT=(T^1,\ldots,T^n)$ with $T^i\in\SSYT(R_i)$. For $i<j$, an attacking pair of $\bnu$ is either a pair $(y,z)$ such that $c(z)=c(y)$, $y\in R_i$ and $z\in R_j$, or a pair $(x,y)$ such that $c(y)=c(x)+1$, $y\in R_i$ and $x\in R_j$. For the former, we take the left neighbour of $z$, which we call $x$, to form a triple $(x,y,z)$. For the latter, we choose the right neighbour of $x$, denoting it by $z$, to form a triple $(x,y,z)$. The inversion pairs counted by $\inv(\bT)$ require that $T^i(y)>T^j(z)$ in the former case, or $T^j(x)>T^i(y)$ in the latter. Since the entries along each row of $T^j$ are weakly increasing from the left, we have $T^j(x)\le T^j(z)$, implying that  $T^i(y)>T^j(z)$ and $T^j(x)>T^i(y)$ cannot hold simultaneously. Consequently, $\inv(\bT)\le h(\bnu)$. It follows that $t^{h(\bnu)}G_{\bnu}(X_n;t^{-1})$ is an $\mathbb{N}[t]$-polynomial.
		
		We apply the involution $\omega$ to both sides of (\ref{E:aQ'=LLT}). By (\ref{E:omegaG}) and (\ref{E:qWhittaker}), we conclude (\ref{E:GW1}). {Since $\omega$ acts coefficientwise in $t$, the right-hand side of (\ref{E:GW1}) contains no negative powers of $t$.}
		{Nevertheless, $h(\bnu)-m(\bnu)$ is not always nonnegative.}   		
	\end{proof}
	
	By Proposition \ref{prop:LLTtoW}, the problem of establishing a combinatorial formula for $a_{\gamma\lambda}^{\eta}(t)$ is reduced to the study of the $q$-Whittaker expansion of vertical-strip LLT polynomials $G_p(X;t)$. We will deduce such a formula from Theorem \ref{thm:unicellular}. We write $\chi(A)=1$ if the statement $A$ is true; otherwise $\chi(A)=0$.
	\begin{theorem}\label{thm:positive}
		Given a Schr\"oder path $p$ of length $n$, let \(\Gamma_p = ([n], E, S)\) be the corresponding decorated unit-interval  graph. Then,
		\begin{equation}
			\langle W_{\lambda}(X;t)\rangle G_p(X;t)
			=\sum_{T\in\SYT(p;\lambda)}
			t^{\inv(p,T)}(1-t)^{\beta(p,T)}
			\prod_{j\in \L(T)-\alpha(p,T)}[\arm_{j}(p,T)+1]_t
			\label{eq:positive}
		\end{equation}
		where the statistics are given in Definition \ref{def} and
		\begin{align*}
			\alpha(p,T)&=\{j\in[n]\,\vert\,ij\in S,\ \row_T(i)=\row_T(j)-1\,\mbox{ and }\, \col_T(i)>\col_T(j)\},\\
			\beta(p,T)&=\#\L(T)-\#\gamma(p,T)-\#\alpha(p,T).
		\end{align*}
	\end{theorem}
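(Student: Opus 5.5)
The plan is to reduce the Schröder path case to the Dyck path case of Theorem \ref{thm:unicellular}, and then rearrange the resulting product into $\mathbb{N}[t,1-t]$-positive form.

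\textbf{Step 1: reduce to a Dyck path.} Let $p$ be a Schröder path with decorated graph $\Gamma_p=([n],E,S)$. Let $p^\circ$ be the Dyck path obtained by replacing every diagonal step $\mathrm D$ by $\mathrm{NE}$. Its unit-interval graph is $([n],E,\emptyset)$, so every strict edge becomes a non-strict edge. A coloring of $\Gamma_p$ is a coloring of $\Gamma_{p^\circ}$ with $\kappa(u)<\kappa(v)$ on each $uv\in S$; on such an edge $uv$ counts as an ascent for $p^\circ$ but not for $p$.

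We remove strict edges one at a time by inclusion--exclusion. For a single strict edge $uv$, colorings of $p^\circ$ split into those with $\kappa(u)<\kappa(v)$ and those with $\kappa(u)\ge\kappa(v)$. The second class is the coloring set of the graph with $uv$ deleted, which is again a unit-interval graph when $uv$ is a corner edge. This gives the linear relation
\[
tG_{p}=G_{p^\circ}-G_{p\setminus uv},
\]
which is the known relation of Alexandersson--Panova type. Iterating over all strict edges expresses $G_p$ as a signed combination of unicellular $G_{p'}$ over Dyck paths $p'$. Theorem \ref{thm:unicellular} then gives $\langle W_\lambda\rangle G_p$ as a signed sum over pairs $(p',T)$.

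\textbf{Step 2: collapse the signed sum tableau by tableau.} Fix $T\in\SYT(p;\lambda)$ and compare, for each strict edge $ij$, the local contributions of ``$ij$ present'' and ``$ij$ deleted''. I would do a case split on the relative position of $i$ and $j$ in $T$.

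\begin{itemize}
\item If $\col_T(i)=\col_T(j)$, then $T$ is excluded for $p^\circ$ but allowed for the deleted graph. The net effect is the removal of the single factor $(1-t^{\arm_j+1})$ in favour of $(1-t)$. This corresponds to $j\in\gamma(p,T)$.
\item If $\row_T(i)=\row_T(j)-1$ and $\col_T(i)>\col_T(j)$, then deleting $ij$ changes $\arm_j$ by one. The difference $t(1-t^{a+1})-(1-t^{a+2})$-type combination telescopes to $t^{a+1}(1-t)$-like terms, and the factor $[\arm_j+1]_t$ disappears. This is the set $\alpha(p,T)$.
\item Otherwise the contributions cancel, leaving $(1-t^{\arm_j+1})=(1-t)[\arm_j+1]_t$.
\end{itemize}

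Tableaux with $\row_T(i)\ge\row_T(j)$ for some strict edge should cancel entirely, which explains condition (2) in $\SYT(p;\lambda)$.

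Collecting one factor of $(1-t)$ from each $j\in\L(T)\setminus(\gamma\cup\alpha)$ gives the exponent $\beta(p,T)$, and the powers of $t$ assemble into $t^{\inv(p,T)}$.

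\textbf{Main obstacle.} The hardest part is the bookkeeping when several strict edges interact, since deleting one edge changes $\arm$ or $\inv$ for other entries. I would handle this by an induction on $|S|$, removing the strict edge $ij$ with the largest $j$. This choice should ensure that only $\arm_j$ and the $\inv$-contribution of $ij$ are affected.

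The two points to verify carefully are that each intermediate graph really is a unit-interval graph, and that the local identity
\[
t^{a}(1-t^{b+1})-t^{a'}(1-t^{b'+1})
\]
produces exactly the stated monomial in each case.
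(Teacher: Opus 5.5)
Your strategy matches the paper's: resolve each diagonal step into $\mathrm{NE}$ (edge present) or $\mathrm{EN}$ (edge deleted), apply Theorem~\ref{thm:unicellular} to the resulting Dyck paths, and collapse the alternating sum tableau by tableau. However, the linear relation it rests on is false. The colorings of the graph with $uv$ deleted are \emph{all} colorings, not only those with $\kappa(u)\ge\kappa(v)$. Let $R$ be the weighted sum over colorings with $\kappa(u)\ge\kappa(v)$. Then $G_{p^\circ}=tG_p+R$ and $G_{p\setminus uv}=G_p+R$, so the correct relation is
\[
(1-t)\,G_p=G_{p\setminus uv}-G_{p^\circ}.
\]
Iterating gives $G_p=(1-t)^{-|S|}\sum_{B\subseteq S}(-1)^{|B|}G_{p_B}$, where $p_B$ keeps the edges of $B$ as non-strict edges; this is the paper's \eqref{eq:resolution}. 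Already for the Schr\"oder path $\mathrm{NDE}$ with $S=\{12\}$ you have $G_p=e_2$, $G_{p^\circ}=h_1^2-(1-t)e_2$ and $G_{p\setminus uv}=h_1^2$. Hence $G_{p^\circ}-G_{p\setminus uv}=-(1-t)e_2\neq te_2$. The slip is not cosmetic. Iterating your relation computes $\bigl(-(1-t)/t\bigr)^{|S|}$ times the true coefficient. The factor $(1-t)^{-1}$ per strict edge is exactly what produces the $t$-integers and the exponent $\beta(p,T)$.

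With the correct normalization, fix $T$ and a strict edge $ij$, and put $a=\arm_j(p,T)$. With the edge deleted, $\arm_j$ is still $a$. With the edge present, the $\inv$-contribution of $ij$ becomes $\chi(\row_T(i)<\row_T(j))$. The arm $\arm_j$ drops by one exactly when $j\in\alpha(p,T)$, and $T$ is forbidden if $\col_T(i)=\col_T(j)$. The differences of the two factors at $j$, divided by $1-t$, are as follows. If $\row_T(i)\ge\row_T(j)$, the difference is $0$, since $i,j$ lie in different columns and nothing changes. If $j\in\gamma(p,T)$, it is $(1-t^{a+1})/(1-t)=[a+1]_t$; so it is the factor $1-t$ that disappears, not the other way round as your first bullet says. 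If $j\in\alpha(p,T)$, it is $\bigl((1-t^{a+1})-t(1-t^{a})\bigr)/(1-t)=1$ exactly, not a ``$t^{a+1}(1-t)$-like'' term. Otherwise it is $1-t^{a+1}=(1-t)[a+1]_t$.

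The interaction you flag as the main obstacle does not occur, so no induction on $|S|$ or ordering of the strict edges is needed. Distinct diagonal steps have distinct top endpoints, so each $j$ carries at most one strict edge $ij$. Toggling that edge affects only the $\inv$-contribution of $ij$, $\arm_j$, and the column condition on $(i,j)$. Hence, for fixed $T$, the whole alternating sum over $B\subseteq S$ factors as a common term times the product of these local differences. Every $p_B$ is automatically a Dyck path, because each $\mathrm{D}$ can be replaced independently.
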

	\begin{remark}
		Note that $\beta(p,T)\ge 0$ since $\gamma(p,T)$ and $\alpha(p,T)$ are two disjoint subsets of $\L(T)$.
	\end{remark}
	\begin{proof}
		Given a Schr\"oder path $p$, let $\Gamma_p=([n], E, S)$ be the corresponding decorated unit-interval graph. For a subset $B\subseteq S$, let $p_{B}$ be the Dyck path obtained from $p$ by replacing a diagonal step with $\mathrm{NE}$ if its edge belongs to $B$, and with $\mathrm{EN}$ otherwise. Then we claim
		\begin{equation}
			G_p(X;t)=(1-t)^{-|S|}\sum_{B\subseteq S}(-1)^{|B|}G_{p_B}(X;t).
			\label{eq:resolution}
		\end{equation}
		Note that the edge set of $\Gamma_{p_B}$ is exactly $(E-S)\cup B$ and all these edges are non-strict edges. For a coloring $\kappa$ on the vertices of $\Gamma_{p}=([n],E,S)$, the number of ascents of coloring $\kappa$ in $\Gamma_{p_B}=([n],(E-S)\cup B)$ is
		\begin{align*}
			\asc(\kappa)+\sum_{ij\in B}\chi(\kappa(i)<\kappa(j))
		\end{align*}
		by \eqref{eq:asc}. It follows that
		\begin{align*}
			\sum_{B\subseteq S}(-1)^{|B|}G_{p_B}(X;t)&=\sum_{\kappa:[n]\to\NN^+} t^{\asc(\kappa)}\prod_{i=1}^{n}x_{\kappa(i)} \sum_{B\subseteq S}\prod_{ij\in B}(-t^{\chi(\kappa(i)<\kappa(j))}) \\
			&=\sum_{\kappa:[n]\to\NN^+} t^{\asc(\kappa)}\prod_{i=1}^{n}x_{\kappa(i)} \prod_{ij\in S}(1-t^{\chi(\kappa(i)<\kappa(j))}) \\
			&=\sum_{\substack{\kappa:[n]\to\NN^+\\ \kappa(i)<\kappa(j)\ \text{for }ij\in S}}   t^{\asc(\kappa)} \prod_{i=1}^{n}x_{\kappa(i)} (1-t)^{|S|}=(1-t)^{|S|}G_p(X;t),
		\end{align*}
		which is equivalent to \eqref{eq:resolution}. Let $ \inv_j(p,T)=\#\{i<j\,\vert\,ij\in E-S\,\mbox{ and }\, \row_T(i)<\row_T(j)\}$, so that $\inv(p,T)=\sum_j \inv_j(p,T)$. By Definition \ref{def}, we find $\mathcal \SYT(p_{B};\lambda)\subseteq \SYT(p_{\emptyset};\lambda)$. By Theorem \ref{thm:unicellular}, we write
		\begin{align}\label{eq:1}
			\langle W_{\lambda}(X;t)\rangle G_{p_B}(X;t)
			&=\sum_{T\in\SYT(p_{B};\lambda)} \prod_{j\in \L(T)} t^{ \inv_j(p_B,T)} (1-t^{\arm_{j}(p_B,T)+1}) \notag\\
			&= \sum_{T\in\SYT(p_{\emptyset};\lambda)} \chi(T\in \SYT(p_{B};\lambda))\prod_{j\in \L(T)}  t^{ \inv_j(p_B,T)} (1-t^{\arm_{j}(p_B,T)+1}),
		\end{align}
		where,  if $T\notin \SYT(p_{B};\lambda)$, the  weight of the product on the right-hand side  is zero.
		For the Dyck path $p_{B}$,  let $E_{B}=(E-S)\cup B$. Then $E_{\emptyset}=E-S \subseteq  E_{B}  \subseteq E$.
		Define $$H(S)=\{j\in[n] \,\vert\, \text{ there exists an }\, i_j\, \,\text{ with } \,i_j<j \text{ and }\, i_j\,j\in S  \}.$$
		Fix a tableau $T\in \SYT(p_{\emptyset};\lambda)$. For each $B\subseteq S$, we define the tableau contribution from $T$ by
		\begin{align*}
			w_B(T)=\chi(T\in \SYT(p_{B};\lambda))t^{\inv(p_B,T)}\prod_{j\in \L(T)}(1-t^{\arm_j(p_B,T)+1}).
		\end{align*}
		Therefore, by (\ref{eq:1}), the contribution of $T$ to $\langle W_{\lambda}(X;t)\rangle \sum_{B\subseteq S}(-1)^{|B|}G_{p_B}(X;t)$ equals
		\begin{align}\label{E:wB}
			\sum_{B\subseteq S}(-1)^{|B|}w_B(T).
		\end{align}
		Since distinct diagonal steps have distinct endpoint heights $j$, there is a unique edge $i_j\,j\in S$ for every $j\in H(S)$. Thus $S$ is indexed by $H(S)$, and choosing $B\subseteq S$ is equivalent to choosing $\chi(i_j\,j\in B)$ for $j\in H(S)$. We first examine the contributions to (\ref{E:wB}) when $i_j\,j\in B$. Define
		\begin{align*}
			\alpha_j(T)&=\chi( \row_T(i_j) <\row_T(j) ), \\
			\beta_j(T)&= \chi( \row_T(i_j) = \row_T(j) - 1,\; \col_T(i_j) > \col_T(j) ),\\
			\delta_j(T)&=\chi( \col_T(i_j) \ne \col_T(j) )
		\end{align*}
		We now discuss the change of statistics. If $j\in H(S)$, then
		\begin{align}\label{E:invarm1}
			\inv_j(p_{B},T)&=\inv_j(p,T)+  \chi( i_j\,j\in B \text{ and } \row_{T}(i_j)<\row_{T}(j) ) \\
			\arm_j(p,T)&=\arm_j(p_{B},T)+ \chi( i_j\,j\in B, \row_{T}(i_j)=\row_{T}(j)-1 \text{ and } \col_{T}(i_j)>\col_{T}(j)   )\notag.
		\end{align}
		Otherwise $j\notin H(S)$, and we have $\inv_j(p,T)=\inv_j(p_{B},T)$ and $\arm_j(p,T)=\arm_j(p_{B},T)$.
		If $i_j\,j\in B$, then (\ref{E:invarm1}) implies that
		\begin{align*}
			\inv_j(p_B,T)&=\inv_j(p,T)+\alpha_j(T),\\
			\arm_j(p_B,T)&=\arm_j(p,T)-\beta_j(T).
		\end{align*}
		Moreover, $T\in \SYT(p_B;\lambda)$ if and only if $\delta_j=1$ {for every $j\in H(S)$ with $i_jj\in B$}. Consequently, we separate the common factor $R(p,T)$ in $w_B(T)$ from the factors depending on $B$, which is defined by
		\begin{align*}
			R(p,T)=t^{\inv(p,T)}\prod_{j\in \L(T)-H(S)}(1-t^{\arm_j(p,T)+1}).
		\end{align*}
		For each $j\in H(S)$, we set
		\begin{align*}
			F_j(1) &= \delta_j(T) t^{\alpha_j(T)}
			\begin{cases}
				1-t^{\arm_j(p,T)+1-\beta_j(T)}, & j \in \L(T), \\[4pt]
				1, & j \notin \L(T).
			\end{cases}  \\
			F_j(0) &=
			\begin{cases}
				1-t^{\arm_j(p,T)+1}, & j \in \L(T), \\[4pt]
				1, & j \notin \L(T). \\
			\end{cases}
		\end{align*}
		These are exactly the remaining factors in $w_B(T)$; that is, let $\varepsilon_j=\chi(i_j\,j\in B)$, and
		\begin{align*}
			w_B(T)=R(p,T)\prod_{j\in H(S)}F_j(\varepsilon_j).
		\end{align*}
		Returning to (\ref{E:wB}), we find
		\begin{align*}
			\sum_{B\subseteq S}(-1)^{|B|}w_B(T)&=R(p,T)\sum_{\varepsilon_j\in \{0,1\}^{H(S)}}\prod_{j\in H(S)}(-1)^{\varepsilon_j}F_j(\varepsilon_j)
			=R(p,T)\prod_{j\in H(S)}(F_j(0)-F_j(1)).
		\end{align*}
		It follows from \eqref{eq:resolution} that
		\begin{align*}
			\langle W_{\lambda}(X;t)\rangle G_p(X;t)=\sum_{T\in\SYT(p_{\emptyset};\lambda)}R(p,T)\prod_{j\in H(S)} \frac{F_j(0)-F_j(1)}{1-t}.
		\end{align*}
		We will analyze the difference $(1-t)^{-1} (F_j(0)-F_j(1))$ by distinguishing the following four cases. For each $j\in H(S)$, we fix $i_j\,j\in S$.
		\begin{enumerate}[label=\arabic*:]
			\item If $\row_T(i_j)\geq \row_T(j)$, then $\alpha_j(T)=\beta_j(T)=0$, and $\col_T(i_j) \neq \col_T(j)$ since $T$ is a standard tableau. Thus $F_j(0)-F_j(1)=0$.
			\item If $j\in \gamma(p,T)$, i.e., $\col_T(i_j)=\col_T(j)$, then $i_j$ lies above $j$ in $T$; thus $j\in \L(T)$ and $F_j(1)=0$. Therefore,
			\begin{align*}
				\frac{F_j(0)-F_j(1)}{1-t}= \frac{1-t^{\arm_{j}(p,T)+1}}{1-t}=[\arm_{j}(p,T)+1]_t.
			\end{align*}
			\item If $j\in \alpha(p,T)$, that is, $\row_T(i_j)=\row_T(j)-1$ and $\col_T(i_j)>\col_T(j)$, then $j\in \L(T)$,  $\alpha_j(T)=\beta_j(T)=1$, and
			\begin{align*}
				\frac{F_j(0)-F_j(1)}{1-t}&= \frac{1-t^{\arm_{j}(p,T)+1} -t(1-t^{\arm_{j}(p,T)})}{1-t}=1.
			\end{align*}
			\item If $\row_{T}(i_j)<\row_T(j)$ and $j\not\in \gamma(p,T)\,\dot\cup\,\alpha(p,T)$, then $j\in \L(T)$, $\alpha_j(T)=1, \beta_j(T)=0$, and we have
			\begin{align*}
				\frac{F_j(0)-F_j(1)}{1-t}= \frac{1-t^{\arm_{j}(p,T)+1}-t(1-t^{\arm_{j}(p,T)+1})}{1-t}
				=1-t^{\arm_{j}(p,T)+1}.
			\end{align*}
		\end{enumerate}
		{By Case~1, any tableau $T$ for which $\row_T(i_j)\geq\row_T(j)$ for some $j\in H(S)$ contributes zero.
			We therefore restrict to tableaux satisfying $\row_T(i_j)<\row_T(j)$ for every $j\in H(S)$. In particular, $H(S)\subseteq\L(T)$.}
		Let $D=H(S)-(\gamma(p,T)\,\dot\cup \,\alpha(p,T))$, then $\L(T)=(\L(T)-H(S))\,\dot\cup\,\alpha(p,T)\,\dot\cup\,\gamma(p,T)\,\dot\cup\,D$. This analysis of the four cases yields
		\begin{align*}
			R(p,T)\prod_{j\in H(S)} \frac{F_j(0)-F_j(1)}{1-t}&=t^{\inv(p,T)}\prod_{j\in \L(T)-H(S)}(1-t)[\arm_j(p,T)+1]_t\\
			&\qquad\times \prod_{j\in \gamma(p,T)}[\arm_j(p,T)+1]_t\times \prod_{j\in D}(1-t)[\arm_j(p,T)+1]_t.
		\end{align*}
		The number of factors $1-t$ is
		$$\#(\L(T)-H(S))+\# D=\#\L(T)-\#\gamma(p,T)-\#\alpha(p,T)=\beta(p,T),$$
		and the arm factors occur  precisely at the entries in $$\L(T)-H(S)+\gamma(p,T)+D=\L(T)-\alpha(p,T).$$
		Moreover, the set $\SYT(p_{\emptyset};\lambda)$ imposes the non-strict edge column conditions, that is condition (1) of Definition \ref{def}. Requiring every strict edge in $S$ to avoid Case 1 adds exactly condition (2) of Definition \ref{def}. Consequently, the sum ranges over $T\in \SYT(p;\lambda)$. This completes the proof of (\ref{eq:positive}).
	\end{proof}
	
	For two weak compositions $\gamma,\eta\in \mathbb{N}^n$ with $\gamma\le \eta$, recall that $\bnu=\bnu(\gamma,\eta)=(-\gamma)/(-\eta)=((-\gamma_1)/(-\eta_1),\ldots, (-\gamma_n)/(-\eta_n))$ is a horizontal strip. Then $\bnu'=\bnu'(\gamma,\eta)$ is a vertical strip. See Example \ref{Example:bnu'}.
	\begin{example}\label{Example:bnu'}
		Let $\gamma=(0,1,1,1)$ and $\eta=(1,3,3,2)$ with $\gamma\le \eta$; then the horizontal strip $\bnu=(-\gamma)/(-\eta)$ is drawn on the left, which, after transposing each row and reversing the order of the columns, gives the vertical-strip $\bnu'=( (1,1)/(1), (1,1),  (1,1), (1,1,1)/(1,1))$ on the right.
		\ytableausetup{boxsize=1.5em,centertableaux}
		\[
		\boldsymbol{\nu}=
		\begin{ytableau}
			\none & \none & v_6  \\
			\noalign{\vskip4pt}
			v_1 & v_3 & \none \\
			\noalign{\vskip4pt}
			v_2 & v_4 & \none  \\
			\noalign{\vskip4pt}
			\none & v_5 & \none
		\end{ytableau}
		\;\xrightarrow[]{\text{transpose}}\;
		\begin{ytableau}
			\none & \omit\hskip6pt & v_1 & \omit\hskip6pt & v_2 & \omit\hskip6pt & \none \\
			\noalign{\vskip4pt}
			\none & \omit\hskip6pt & v_3 & \omit\hskip6pt & v_4 & \omit\hskip6pt & v_5 \\
			\noalign{\vskip4pt}
			v_6 & \omit\hskip6pt & \none & \omit\hskip6pt & \none & \omit\hskip6pt & \none
		\end{ytableau}
		\;\xrightarrow[]{\text{reverse}}\;
		\boldsymbol{\nu}'=
		\begin{ytableau}
			\none & \omit\hskip6pt & v_2 & \omit\hskip6pt & v_1 & \omit\hskip6pt & \none \\
			\noalign{\vskip4pt}
			v_5 & \omit\hskip6pt & v_4 & \omit\hskip6pt & v_3 & \omit\hskip6pt & \none \\
			\noalign{\vskip4pt}
			\none & \omit\hskip6pt & \none & \omit\hskip6pt & \none & \omit\hskip6pt & v_6
		\end{ytableau}
		\]
	\end{example}

	Proposition \ref{prop:path-vertical} states that there exists a Schr\"oder path $p=p(\bnu')$ such that $\G_{\bnu'}(X;t)=G_p(X;t)$. Together with \eqref{E:GW1} and (\ref{eq:positive}), we conclude
	\begin{align}\label{E:a_path}
		a_{\gamma\lambda}^{\eta}(t)&=t^{h(\bnu)-m(\bnu)}\langle W_{\lambda'}(X;t)\rangle G_{p}(X;t)\notag\\
		&=t^{h(\bnu)-m(\bnu)}\sum_{T\in\SYT(p;\lambda')}
		t^{\inv(p,T)}(1-t)^{\beta(p,T)}
		\prod_{j\in \L(T)-\alpha(p,T)}[\arm_{j}(p,T)+1]_t.
	\end{align}
	We now translate all the statistics into tableau-theoretic terms in the skyline diagram $\dg(\eta/\gamma)$ to derive the formula (\ref{E:productEP}). This will be useful for connecting its specialization at $t=0$ with the Littlewood-Richardson skew skyline tableaux in Theorem \ref{T:Hthm4.1}.
	{In order to avoid confusion between the two kinds of statistics, we mark the tableau version with a star $*$.}

	\begin{definition}[{\em \color{blue} Attacking pair in the skyline diagram}]\label{def:inv-attack}
		Let $\eta,\gamma\in\mathbb{N}^n$ be two weak compositions satisfying $\gamma\leq\eta$, and set $N=|\eta|-|\gamma|$.
		Two ordinary boxes $u,v$ of $\dg(\eta/\gamma)$ are said to be
		{\color{blue} attacking} if either $u=(i,k),v=(j,k)$,
		or $u=(j,k),v=(i,k-1)$, for some $i<j$. Let $m(\gamma,\eta)$ denote the number of attacking pairs in $\dg(\eta/\gamma)$.
		The relative positions of the boxes are illustrated below.
		\begin{figure}[H]
			\centering
			\begin{minipage}[H]{0.3\linewidth}
				\centering
				\begin{ytableau}
					u\\
					\none[\vdots]\\
					v
				\end{ytableau}
				\caption*{Case I}
			\end{minipage}
			\begin{minipage}[H]{0.3\linewidth}
				\centering
				\begin{ytableau}
					v & \none\\
					\none[\vdots] & \none[\vdots]\\
					\none & u
				\end{ytableau}
				\caption*{Case II}
			\end{minipage}
		\end{figure}
		Define
		$
		H(\gamma,\eta)=\bigl\{ (i,j,k): 1\leq i<j\leq n,\ \gamma_i<k\leq\eta_i,\ \gamma_j\leq k\leq\eta_j \bigr\}$
		and let
		$h(\gamma,\eta)=\#H(\gamma,\eta)$.
		In other words, $h(\gamma,\eta)$ equals the number of pairs $(u,v)$ of boxes
		$u=(i,k)$ and $v=(j,k)$ in the same column, with $i<j$, such that $u$ is ordinary and $v$ is either ordinary or the rightmost basement box in row $j$.
		
		Label the ordinary boxes of $\dg(\eta/\gamma)$ as
		$v_1,\ldots,v_N$ in column-reading order: read the columns from right to left and the boxes within
		each column from top to bottom. 
	\end{definition}
	\begin{definition}[Reinterpretation of Definition \ref{def} by Proposition \ref{prop:path-vertical}]\label{Def:etagamma}
		For two weak compositions $\gamma,\eta\in \mathbb{N}^n$ with $\gamma\leq\eta$, 
		recall that $v_i$ denotes the $i$th ordinary box of
		$\dg(\eta/\gamma)$ in column-reading order. 

		Let $\SYT(\lambda;\eta,\gamma)$ denote the set of standard Young tableaux $T$ of shape $\lambda$ with the following two properties:
		\begin{enumerate}
			\item If $(v_i,v_j)$ is attacking in $\dg(\eta/\gamma)$ with $i<j$, then $\col_T(i)\neq \col_T(j)$.\\[-2mm]
			\item If $
			\raisebox{1pt}{\begin{ytableau}[] v_j&v_i \end{ytableau}}\in  \dg(\eta/\gamma)$ with $i<j$, then $\row_T(i)<\row_T(j)$.
		\end{enumerate}
		We now define
		\[
		\inv^*(T) =\#\left\{(i,j)\,\vert\,
		\text{$(v_i,v_j)$ is attacking in $\dg(\eta/\gamma)$ with $i<j$ and $\row_T(i)<\row_T(j)$}
		\right\}.
		\]
		Recall that $\L(T)=\{j\,\vert\, \row_T(j)>1\}$. For $j\in \L(T)$, let
		\begin{equation*}
			\arm_{j}^*(T)=\#\left\{i<j\,\bigg\vert\,
			\begin{array}{l}
				\row_T(i)=\row_T(j)-1,\,
				\col_T(i)>\col_T(j)\\[1mm]
				\text{and $(v_i,v_j)$ is not attacking in $\dg(\eta/\gamma)$}
			\end{array}
			\right\}.
		\end{equation*}
		Finally, we define
		\begin{align*}
			\gamma^*(T)&=\{j\,\big\vert\,
			\raisebox{1pt}{\begin{ytableau}[] v_j&v_i \end{ytableau}}\in  \dg(\eta/\gamma) \text{ and } \col_T(i)=\col_T(j)\},\\
			\alpha^*(T)&=\{j\,\big\vert\,
			\raisebox{1pt}{\begin{ytableau}[] v_j&v_i \end{ytableau}}\in  \dg(\eta/\gamma),\,\row_T(i)=\row_T(j)-1 \text{ and } \col_T(i)>\col_T(j)\},\\
			\beta^*(T)&=\# \L(T)-\# \gamma^*(T)-\#\alpha^*(T)=|\lambda|-\lambda_1-\# \gamma^*(T)-\#\alpha^*(T).
		\end{align*}
	\end{definition}
	
	We now establish the tableau formula (\ref{E:productEP}) for $a_{\gamma\lambda}^{\eta}(t)$ in Theorem \ref{T:main}.

	\begin{lemma}[Equation (\ref{E:productEP})]
		For weak compositions $\gamma,\eta\in \mathbb{N}^n$ with $\gamma\le \eta$ and a partition $\lambda$ of $|\eta|-|\gamma|$, we have
		\begin{align*}
			a_{\gamma\lambda}^{\eta}(t)&=
			\sum_{T\in\SYT(\lambda';\eta,\gamma)}
			t^{\inv^*(T)+h-m}(1-t)^{\beta^*(T)}
			\prod_{j\in \L(T)- \alpha^*(T)}[\arm^*_{j}(T)+1]_t,
		\end{align*}
		where $h=h(\gamma,\eta)$ and $m=m(\gamma,\eta)$, as given in Definition \ref{def:inv-attack}. Consequently,
		\begin{equation}\label{eq:ddef}
			a_{\gamma\lambda}^{\eta}=a_{\gamma\lambda}^{\eta}(0)=\#\{ T\in\SYT(\lambda';\eta,\gamma)\,\vert\, \inv^*(T)+h=m\}.
		\end{equation}
	\end{lemma}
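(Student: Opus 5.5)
The plan is to start from equation (\ref{E:a_path}), which already expresses $a_{\gamma\lambda}^{\eta}(t)$ as $t^{h(\bnu)-m(\bnu)}$ times the right-hand side of (\ref{eq:positive}) for the Schr\"oder path $p=p(\bnu')$ with shape $\lambda'$. Everything then reduces to a dictionary. Under the labelling of Proposition \ref{prop:path-vertical} applied to the vertical strip $\bnu'$, the boxes of $\bnu'$ should correspond to the ordinary boxes $v_1,\ldots,v_N$ of $\dg(\eta/\gamma)$ in column-reading order. Once that is established, each statistic on $(p,T)$ in Definition \ref{def} matches the starred statistic in Definition \ref{Def:etagamma}, and the prefactor satisfies $h(\bnu)=h(\gamma,\eta)$ and $m(\bnu)=m(\gamma,\eta)$.

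\textbf{Step 1: the bijection of boxes.} Row $i$ of $\bnu$ is $(-\gamma_i)/(-\eta_i)$. The ordinary box $(i,k)$ of $\dg(\eta/\gamma)$, with $\gamma_i<k\le\eta_i$, corresponds to the box of content $-k$ in component $i$ of $\bnu$. After transposing and reversing, it becomes a box of component $n+1-i$ of $\bnu'$ with content $k$. Ordering by decreasing content is then reading the skyline columns from right to left. Within a column $k$, decreasing component index in $\bnu'$ means increasing row $i$, i.e.\ top to bottom. So the labels of Proposition \ref{prop:path-vertical} are exactly $v_1,\ldots,v_N$; Example \ref{Example:bnu'} should be checked against this.

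\textbf{Step 2: edges.} Strict edges $pq\in S$ are vertically adjacent boxes in a column of $\bnu'$. These are horizontally adjacent boxes $(i,k),(i,k-1)$ in a row of the skyline, with the left one $v_j$ carrying the larger label, which is the configuration $v_j\,v_i$ in conditions (2), $\gamma^*$ and $\alpha^*$. Non-strict edges are attacking pairs of $\bnu'$. Translating contents and component order as in Step 1 should give exactly Cases I and II of Definition \ref{def:inv-attack}:
\begin{itemize}
\item Case I: same content and components in the appropriate order correspond to the same skyline column $k$ with $i<j$;
\item Case II: contents differing by one correspond to $(j,k)$ and $(i,k-1)$ with $i<j$.
\end{itemize}
With $E-S$ identified with the attacking pairs of $\dg(\eta/\gamma)$ and $S$ with the adjacent-row pairs, $\SYT(p;\lambda')=\SYT(\lambda';\eta,\gamma)$. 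The same identification gives $\inv=\inv^*$, $\arm=\arm^*$, $\alpha=\alpha^*$, $\gamma(p,T)=\gamma^*(T)$ and hence $\beta=\beta^*$. Here I also have to confirm that "$ij\notin E-S$" in $\arm_j$ matches "not attacking".

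\textbf{Step 3: the exponents.} Attacking pairs of $\bnu$ correspond bijectively to attacking pairs of the skyline under the same dictionary, so $m(\bnu)=m(\gamma,\eta)$. For $h$, a triple $(x,y,z)$ of $\bnu$ consists of:
\begin{itemize}
\item a box $y$ in row $i$;
\item an adjacent pair $x,z$ in the extended row $j>i$, with $c(z)=c(y)=-k$.
\end{itemize}
Then $z$ is the box of content $-k$ in the extended row $j$. Such $z$ exists exactly when $\gamma_j\le k\le\eta_j$, where $k=\gamma_j$ covers the extra box at the end, i.e.\ the rightmost basement box. This should give $h(\bnu)=\#H(\gamma,\eta)$, but the off-by-one for which end of the extended row is allowed has to be checked carefully. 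Getting this boundary convention right, together with the sign and direction conventions in the Case II attacking condition, is the main obstacle. I would verify both on Example \ref{Example:bnu'} before writing the general argument.

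\textbf{Step 4: the specialization.} Setting $t=0$ kills every term except those with exponent $\inv^*(T)+h-m=0$, in which every $[\cdot]_t$ factor becomes $1$ and every $(1-t)$ factor becomes $1$. The exponent is nonnegative for every tableau, since each term lies in $\mathbb N[t,1-t]$ by Theorem \ref{thm:positive} combined with the fact that the left-hand side is a polynomial. Hence $a_{\gamma\lambda}^{\eta}(0)$ counts the $T$ with $\inv^*(T)+h=m$, which is (\ref{eq:ddef}).
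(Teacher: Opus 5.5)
Your proposal is correct and follows the paper's proof. You start from \eqref{E:a_path}, identify the labels of Proposition~\ref{prop:path-vertical} on $\bnu'$ with the column-reading order $v_1,\ldots,v_N$ (so that $E-S$ and $S$ become the attacking pairs and the horizontally adjacent pairs of $\dg(\eta/\gamma)$), and check $h(\bnu)=h(\gamma,\eta)$ via $(x,y,z)\mapsto(y,z)$ and $m(\bnu)=m(\gamma,\eta)$; your boundary analysis of the extended row, giving $\gamma_j\le k\le\eta_j$, is exactly right. The only divergence is at $t=0$: you derive $\inv^*(T)+h-m\ge 0$ from polynomiality of $a_{\gamma\lambda}^{\eta}(t)$, which is valid once you note that each summand is $t^{\inv^*(T)+h-m}$ times a polynomial with constant term $1$, so the lowest power of $t$ cannot cancel; the paper instead obtains this nonnegativity combinatorially from \eqref{eq:zero-stat} in Section~\ref{S:4}.
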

	\begin{proof}
		Combining (\ref{E:a_path}), Proposition \ref{prop:path-vertical} and the notation
		of Definition~\ref{Def:etagamma}, we obtain
		\begin{equation*}
			a_{\gamma\lambda}^{\eta}(t)=t^{h(\bnu)-m(\bnu)}
			\sum_{T\in\SYT(\lambda';\eta,\gamma)}
			t^{\inv^*(T)}(1-t)^{\beta^*(T)}
			\prod_{j\in \L(T)-\alpha^*(T)}[\arm_{j}^*(T)+1]_t,
		\end{equation*}
		where $\bnu=(-\gamma)/(-\eta)$.
		It remains to prove that $h(\bnu)=h(\gamma,\eta)$ and $m(\bnu)=m(\gamma,\eta)$.
		By Definitions \ref{Def:triples}  and \ref{def:inv-attack}, the map $(x,y,z)\mapsto(y,z)$ is indeed a bijection between the $h$-triples and the pairs of boxes counted by $h(\gamma,\eta)$. Hence $h(\bnu)=h(\gamma,\eta)$. Similarly, Definition \ref{Def:attack} gives  $m(\bnu)=m(\gamma,\eta)$. This proves (\ref{E:productEP}).
		
	\end{proof}
	\begin{example}\label{exam2}
		Let $\eta=(4,4,2)$ and $\gamma=(1,1,0)$.
		The diagram $\dg(\eta/\gamma)$ is shown below on the left, with each
		ordinary box labeled by its position in column-reading order:
		\[
		\vcenter{\hbox{\begin{ytableau}
					*(green) * & \ast & v_5 & v_3 & v_1 \\
					*(green) * & \ast & v_6 & v_4 & v_2 \\
					*(green) * & v_8  & v_7
		\end{ytableau}}}
		\qquad\quad
		\vcenter{\hbox{$
				T=\begin{array}{ccc}
					1&2&7\\
					3&4&\\
					5&6&\\
					8&&
				\end{array}$}}
		\]
		There are $7$ attacking pairs in $\dg(\eta/\gamma)$ by Definition \ref{def:inv-attack}, $5$ of which lie in a single column:
		\begin{align*}
			& \{(v_1,v_2),(v_2,v_3),(v_3,v_4),(v_4,v_5),(v_5,v_6),(v_5,v_7),(v_6,v_7)\}, \\
			& \{(v_5,v_6),(v_3,v_4),(v_1,v_2),(v_5,v_7),(v_6,v_7)\},
		\end{align*}
		Thus, $m(\gamma,\eta)=7$ and $h(\gamma,\eta)=5$.
		Let $\lambda'=(3,2,2,1)$. Then the tableau $T$ shown above on the right is a standard Young tableau, $T\in \SYT(\lambda';\eta,\gamma)$ by Definition \ref{Def:etagamma}.
		
		Among the attacking pairs $(v_i,v_j)$ with $i<j$, the only ones
		satisfying $\row_T(i)<\row_T(j)$ are $(v_2,v_3)$ and $(v_4,v_5)$, so that $\inv^*(T)=2$. It follows that
		$\inv^*(T)+h(\gamma,\eta)-m(\gamma,\eta)=0$.
		Moreover, $\gamma^*(T)=\{3,4,5,6\}$, $\alpha^*(T)=\emptyset$ and $\beta^*(T)=1$.	    	
	\end{example}

		\section{Proof of Theorem \ref{T:main}, Part II}\label{S:4}
		This section is devoted to showing that, at $t=0$, Theorem~\ref{T:main} reduces to Theorem \ref{T:Hthm4.1}.
		We now introduce the Littlewood-Richardson skew skyline (LRS) tableaux of \cite[Sections  3 and 4]{HLMvW10}, which yield the combinatorial formula for $a_{\gamma\lambda}^{\eta}$ in Theorem \ref{T:Hthm4.1}.
		
		A \emph{\color{blue}skyline filling} $\sigma$ of $\dg(\eta/\gamma)$ is an assignment of a positive integer to each ordinary box of $\dg(\eta/\gamma)$, and we require the entries in each row
		to be weakly decreasing from left to right.
		For the purpose of comparing entries, we regard each basement symbol $\ast$ as a formal entry larger than every positive integer.
		Basement symbols in the same row are considered equal, whereas, among basement symbols in the same column, the one in the higher row is considered larger.
		\begin{definition}[{\em \color{blue} Inversion triple and column word}]
			For a skew filling $\sigma$ of the diagram $\dg(\eta/\gamma)$, fix rows $i<j$.
			If $\eta_i\geq\eta_j$, then a {triple of type $A$} is a
			triple $(a,b,c)$ of boxes in $\dg(\eta/\gamma)$ with $a=\sigma(i,k)$, $b=\sigma(j,k)$ and $c=\sigma(i,k-1)$.
			If $\eta_i<\eta_j$, then a triple of type $B$ is a triple $(a,b,c)$ of boxes with
			$a=\sigma(j,k+1)$, $b=\sigma(i,k)$ and $c=\sigma(j,k)$.
			
			The two types of triples are illustrated in Figure \ref{fig:typeAB}. The boxes $a,b,c$ need not be ordinary boxes: any of them may belong to the extended basement and hence contain $\ast$. Since the entries in each row of $\sigma$ are weakly decreasing, we have $a\leq c$.
			A triple $(a,b,c)$ is called an {\color{blue} inversion triple} if
			\begin{equation}
				b<a\leq c
				\qquad\text{or}\qquad
				a\leq c<b.
				\label{eq:inversion-triple}
			\end{equation}
			In the remaining case, $a\leq b\leq c$, it is called a {\color{blue}coinversion triple}.
			The {\color{blue} column word} of a filling $\sigma$, denoted by $\operatorname{col}(\sigma)$, is obtained by reading the
			ordinary entries column by column from right to left, and from top to bottom within each column. All basement symbols $\ast$ are omitted.
		\end{definition}
		
		\begin{figure}[H]
			\centering
			\begin{minipage}[H]{0.25\linewidth}
				\centering
				\begin{ytableau}
					c & a\\
					\none & \none[\vdots]\\
					\none & b
				\end{ytableau}
				\caption*{type $A$}
			\end{minipage}
			\begin{minipage}[H]{0.25\linewidth}
				\centering
				\begin{ytableau}
					b & \none\\
					\none[\vdots] & \none\\
					c & a
				\end{ytableau}
				\caption*{type $B$}
			\end{minipage}
			\begin{minipage}[H]{0.4\linewidth}
				\centering
				\[\sigma=\begin{ytableau}
					*(green) * & \ast & 1 \\
					*(green) * & 2 \\
					*(green) * &\ast &  \ast & 2 \\
					*(green) * & 1 \\
					*(green) * & \ast & \ast & 3 & 3
				\end{ytableau}
				\]
			\end{minipage}
			\caption{Triples of two types and a Littlewood-Richardson skew skyline tableau (rightmost)}
			\label{fig:typeAB}
		\end{figure}
		\begin{definition}[{\em \color{blue} Regular contre-lattice}]
			A word $w$ over the positive integers is called {\color{blue} contre-lattice} if, in every prefix of $w$, the number of occurrences of $j$ is at least the number of occurrences of $j-1$ for every $2\leq j\leq\max(w)$.
			Such a word is called {\color{blue} regular} if its smallest letter is $1$. By convention, the empty word is regular.
		\end{definition}
		\begin{definition}[{\em \color{blue} Littlewood-Richardson skew skyline (LRS) tableaux}]\label{Def:LRS}
			A skyline filling $\sigma$ is called a {Littlewood-Richardson skew skyline (LRS) tableau} if $\operatorname{col}(\sigma)$ is a regular contre-lattice word
			and every type $A$ and type $B$ triple is an inversion triple.
			
			The {\color{blue} (filling) type} of a tableau $\sigma$ is the sequence $(c_1,c_2,\ldots,c_N)$ where $N$ is the largest entry of $\sigma$ and $c_i$ is the number of occurrences of $i$ in $\sigma$.
		\end{definition}
		\begin{example}
			Let $\eta=(2,1,3,1,4), \gamma=(1,0,2,0,2)$. Then the tableau $\sigma$ in Figure \ref{fig:typeAB}, with column word $\col(\sigma)=323121$, is a Littlewood-Richardson skew skyline (LRS) tableau of shape $\dg(\eta/\gamma)$.
		\end{example}
		
		\subsection{Proof of Theorem \ref{T:main} at $t=0$}
		By \eqref{eq:ddef}, it suffices to prove a bijection
		\begin{align}\label{eq:set}
			\psi:\,	&\bigl\{
			T\in\SYT(\lambda';\eta,\gamma)\,\vert\,
			\inv^*(T)
			+h(\gamma,\eta)=m(\gamma,\eta)
			\bigr\}
			\notag\\
			&\quad\rightarrow
			\{\text{LRS tableaux of shape }
			\dg(\eta/\gamma)
			\text{ with type } \lambda^r\}
		\end{align}
		The construction of $\psi$ relies on the following lemma.
		\begin{lemma}\label{L:pq}
			Let $T\in\SYT(\lambda';\eta,\gamma)$ and let $p<q$.
			Suppose that the boxes $(v_p,v_q)$ of ordinary boxes in $\dg(\eta/\gamma)$
			are either attacking or such that $v_p$ is immediately to the right of $v_q$. If $\row_T(p)<\row_T(q)$, then $\col_T(p)\geq \col_T(q)$.
		\end{lemma}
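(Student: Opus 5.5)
The plan is to argue by contradiction, using only the standardness of $T$ and condition (1) in the definition of $\SYT(\lambda';\eta,\gamma)$, together with a direct description of which labels lie strictly between $p$ and $q$ in the column-reading order.

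\emph{Step 1 (reduction to an intermediate entry).} Suppose $\row_T(p)<\row_T(q)$ but $\col_T(p)<\col_T(q)$. Since $T$ has partition shape and contains the box $(\row_T(q),\col_T(q))$, it also contains the box $(\row_T(p),\col_T(q))$. Let $r$ be its entry. Then $r<q$, because this box lies strictly above $q$ in the same column. Also $r>p$, because it lies strictly to the right of $p$ in the same row. Hence $p<r<q$, $\col_T(r)=\col_T(q)$ and $\row_T(r)<\row_T(q)$. It therefore suffices to show that $(v_r,v_q)$ is attacking in $\dg(\eta/\gamma)$. Since $r<q$, condition (1) then forbids $\col_T(r)=\col_T(q)$, which is the desired contradiction.

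\emph{Step 2 (locating $v_r$).} Ordinary boxes are labelled by reading columns from right to left and each column from top to bottom. So the boxes with labels strictly between $p$ and $q$ are determined by the positions of $v_p$ and $v_q$. There are three configurations.

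\begin{enumerate}
\item[(a)] Case I attacking: $v_p=(i,k)$ and $v_q=(j,k)$ with $i<j$. Then $v_r=(i',k)$ with $i<i'<j$.
\item[(b)] Case II attacking: $v_p=(j,k)$ and $v_q=(i,k-1)$ with $i<j$. Here $v_p$ is read first because its column is further right. Then $v_r$ is either $(j',k)$ with $j'>j$, or $(i',k-1)$ with $i'<i$.
\item[(c)] $v_p$ immediately to the right of $v_q$: $v_p=(i,k+1)$ and $v_q=(i,k)$. Then $v_r$ is either $(i',k+1)$ with $i'>i$, or $(i',k)$ with $i'<i$.
\end{enumerate}

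In each case $v_r$ is an ordinary box, because only ordinary boxes receive labels.

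\emph{Step 3 (checking attacks).} We check against Definition \ref{def:inv-attack}.

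\begin{enumerate}
\item[(a)] $v_r=(i',k)$ and $v_q=(j,k)$ lie in the same column with $i'<j$, so they form a Case I attack.
\item[(b)] If $v_r=(j',k)$, then $v_r$ and $v_q=(i,k-1)$ satisfy $i<j'$, which is exactly Case II. If $v_r=(i',k-1)$, it sits above $v_q$ in the same column, which is Case I.
\item[(c)] If $v_r=(i',k+1)$ with $i'>i$, then $v_r$ and $v_q=(i,k)$ form a Case II attack. If $v_r=(i',k)$ with $i'<i$, it is a Case I attack.
\end{enumerate}

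In every case $(v_r,v_q)$ is attacking with $r<q$, contradicting condition (1). This proves $\col_T(p)\ge\col_T(q)$.

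The only delicate point is Step 2: correctly determining which boxes are read between $v_p$ and $v_q$, especially the orientation in Case II, where $v_p$ is the lower-right box. Once the reading-order intervals are written down, each subcase is an immediate match with one of the two attacking patterns. Notably, no induction on $q-p$ seems necessary, and condition (2) is not needed.
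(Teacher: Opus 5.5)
Your proof is correct and follows essentially the same route as the paper. You assume $\col_T(p)<\col_T(q)$, take the entry at a corner of the rectangle spanned by $p$ and $q$ (which lies strictly between them), and use the column-reading order to show it attacks the endpoint in its $T$-column, contradicting condition (1). The only difference is the corner: the paper uses $(\row_T(q),\col_T(p))$ and pairs it with $v_p$, while you use $(\row_T(p),\col_T(q))$ and pair it with $v_q$; your explicit case check in Steps 2--3 spells out what the paper asserts in one line.
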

		\begin{proof}
			Suppose, to the contrary, that $\row_T(p)<\row_T(q)$ and $\col_T(p)<\col_T(q)$.
			Let $j$ be the entry in the box of $T$ at coordinates $(\row_T(q),\col_T(p))$. Since the entries in each row (or column) of $T$ are strictly increasing from the left (or from the top), we have $p<j<q$.

			{Since $(v_p,v_q)$ is either an attacking pair or a pair such that $v_p$ is immediately to the right of $v_q$, it follows from the  column-reading order of $\dg(\eta/\gamma)$ that $(v_p,v_j)$ is attacking. The defining conditions of $\SYT(\lambda';\eta,\gamma)$
				therefore require $\col_T(p)\neq \col_T(j)$.}
			However, $\col_T(p)=\col_T(j)$, a contradiction.
		\end{proof}

		\noindent\emph{Proof of \eqref{eq:set}.}
		Let $\lambda=(\lambda_1\geq\cdots\geq\lambda_l>0)$ and
		$T\in\SYT(\lambda';\eta,\gamma)$.
		Write $N=|\lambda|=|\eta|-|\gamma|$ and, for $1\leq k\leq N$, define
		\begin{equation}\label{eq:map}
			w_k=l+1-\col_T(k).
		\end{equation}
		Fill each ordinary box $v_k$ of $\dg(\eta/\gamma)$
		with entry $w_k$, and denote the resulting filling by $\psi(T)$. For simplicity, write $\sigma=\psi(T)$; we will verify that {whenever $\inv^*(T)
			+h(\gamma,\eta)=m(\gamma,\eta)$,  $\sigma$ is an LRS tableau with filling type $\lambda^r=(0<\lambda_l\le\ldots\le\lambda_1)$.}
		
		By construction, for each $1\leq i\leq l$, exactly $\lambda_i$ boxes of $\sigma$ contain $l+1-i$. Thus, $\sigma$ has type $\lambda^r$. For each $1\leq p\leq N$, the boxes of $T$ containing the integers $1$
		through $p$ form a standard tableau in which the column heights are weakly decreasing from left to right. Consequently, $\operatorname{col}(\sigma)$ is a regular
		contre-lattice word.
		
		By \eqref{eq:map} and Definition~\ref{Def:etagamma} (1), we have $\sigma(v_i)=w_i\neq w_j=\sigma(v_j)$ provided that $(v_i,v_j)$ is attacking in $\dg(\eta/\gamma)$.
		Note that \eqref{eq:map} also gives
		\begin{align}\label{E:11}
			\col_T(p)\geq \col_T(q)
			\quad\Longleftrightarrow\quad
			\sigma(v_p)=w_p\leq w_q=\sigma(v_q).
		\end{align}
		Together with
		Definition~\ref{Def:etagamma} (2) and Lemma~\ref{L:pq}, it follows
		that each row of $\sigma$ is weakly decreasing from the left.
		
		It remains to verify that all type $A$ and type $B$ triples
		in $\sigma$ are inversion triples. Since $T$ is a standard Young tableau, the inequalities
		$p<q$ and $\col_T(p)>\col_T(q)$ imply $\row_T(p)<\row_T(q)$.
		By Lemma~\ref{L:pq}, Definition \ref{Def:etagamma} (1) and (\ref{E:11}), this yields
		\[
		\row_T(p)<\row_T(q)
		\quad\Longleftrightarrow\quad
		\col_T(p)>\col_T(q)
		\quad\Longleftrightarrow\quad
		\sigma(v_p)<\sigma(v_q)
		\]
		whenever $(v_p,v_q)$ is an attacking pair in
		$\dg(\eta/\gamma)$. Consequently, we find
		\[
		\begin{aligned}
			m(\gamma,\eta)-\inv^*(T)=
			\#\left\{
			(p,q)\,\vert\,
			(v_p,v_q) \text{ with $p<q$ is attacking and }\,\sigma(v_p)>\sigma(v_q)
			\right\}.
		\end{aligned}
		\]
		For each $(i,j,k)\in H(\gamma,\eta)$, let $a=\sigma(j,k+1)$, $b=\sigma(i,k)$ and $c=\sigma(j,k)$
		with the convention $\sigma(j,\eta_j+1)=0$. Then the triple of entries $(a,b,c)$ contributes to $m(\gamma,\eta)-\inv^*(T)$ only when
		\begin{equation}\label{eq:two-ineq}
			b>c  \quad\text{or}\quad  a>b.
		\end{equation}
		Since each row of $\sigma$ is weakly decreasing from the left,
		the two inequalities in \eqref{eq:two-ineq} cannot
		hold simultaneously.
		Moreover, neither inequality holds if and only if $a<b<c$.
		This implies
		\begin{equation}\label{eq:zero-stat}
			\begin{aligned}
				&\inv^*(T)+h-m
				=
				\#\bigl\{
				(i,j,k)\in H(\gamma,\eta)\,\vert\,
				\sigma(j,k+1)<\sigma(i,k)<\sigma(j,k)
				\bigr\},
			\end{aligned}
		\end{equation}
		where $h=h(\gamma,\eta)$ and $m=m(\gamma,\eta)$. In particular, we have $\inv^*(T)+h-m\in \mathbb{N}$ for each $T\in\SYT(\lambda';\eta,\gamma)$,
		and $a_{\gamma\lambda}^{\eta}$ also counts those tableaux for which the set on the right-hand side of \eqref{eq:zero-stat} is empty.
		
		{We now suppose that $\inv^*(T)+h-m=0$}. Fix two rows $i$ and $j$ with $i<j$. If $\eta_i<\eta_j$,
		consider a type $B$ triple $(a,b,c)$ in $\sigma$. Since $c\ge a$, \eqref{eq:zero-stat} and the attacking condition imply  $c\ge a>b$ or $b>c\ge a$. That is, it is an inversion triple.
		
		Now suppose that $\eta_i\ge\eta_j$ and that $\sigma$ contains a type $A$ coinversion triple
		$(a,b,c)$, so that $a<b<c$.
		Choose the largest $k$ such that $\sigma(i,k)<\sigma(j,k)$.
		By our convention, an upper basement entry is larger
		than a lower basement entry in the same column.
		Hence,  the box $(i,k)$ must be ordinary, so $k>\gamma_i$.
		If $k<\gamma_j$, then $\sigma(j,k+1)=\ast$, whereas
		the box $(i,k+1)$ exists and is ordinary since $\eta_i\geq\eta_j$.
		This contradicts the maximality of $k$.
		Therefore, $\gamma_j\leq k\leq\eta_j$. By the maximality of $k$, the attacking condition,
		and the weak decrease along each row, we obtain
		\[
		\sigma(j,k+1)< \sigma(i,k+1)\leq\sigma(i,k)< \sigma(j,k).
		\]
		Thus, $(i,j,k)\in H(\gamma,\eta)$ and $\sigma(j,k+1)<\sigma(i,k)<\sigma(j,k)$,
		contradicting the condition that \eqref{eq:zero-stat} equals zero.
		Consequently, every type $A$ and type $B$ triple in $\psi(T)$
		is an inversion triple, and $\psi(T)$ is an LRS tableau of shape $\dg(\eta/\gamma)$ with type $\lambda^r$.
		
		The map $\psi$ is clearly invertible: Read $w_1,\ldots,w_N$ from a skyline filling $\psi(T)$
		of $\dg(\eta/\gamma)$ in column reading order.
		Starting with an empty Young diagram, successively place
		$k$ at the bottom of column $l+1-w_k$, for $1\le k\le N$, thereby recovering $T$. This completes the bijective proof of (\ref{eq:set}).
		
		\qed


		\begin{example}
			For the tableau $T$ in Example \ref{exam2}, we have
			$\ell(\lambda)=3$, and the column indices of the entries
			from $1$ through $8$ are  $(1,2,1,2,1,2,3,1)$. Hence $(w_i)_{i=1}^8=(3,2,3,2,3,2,1,3)$, and
			\[\psi(T)=\begin{ytableau}
				*(green) * & \ast & 3 & 3& 3 \\
				*(green) * & \ast & 2 &2 &2 \\
				*(green) * & 3 & 1
			\end{ytableau}
			\]
			Conversely, reading $w=\operatorname{col}(\psi(T))$ and placing each label $k$ in column $4-w_k$, with entries increasing from top to bottom within each column, recovers $T$.
		\end{example}

		\section{Proof of Corollaries \ref{cor1} and \ref{cor21}}\label{S:5}
		Before proving Corollaries \ref{cor1} and \ref{cor21}, we recall some background on quasisymmetric functions.
		\begin{definition}[{\em \color{blue} Quasisymmetric function}]
			A {quasisymmetric function} is a formal power series of bounded degree $F\in \mathbb{Q}(t)[x_1,x_2,\ldots]$ such that for all $k$ and $i_1<i_2<\cdots<i_k$, we have
			\begin{align*}
				\langle x_{i_1}^{\alpha_1}x_{i_2}^{\alpha_2}\cdots x_{i_k}^{\alpha_k}\rangle F
				=\langle x_{1}^{\alpha_1}x_{2}^{\alpha_2}\cdots x_{k}^{\alpha_k}\rangle F
			\end{align*}
			for all compositions $(\alpha_1,\alpha_2,\ldots,\alpha_k)$.
		\end{definition}
		The polynomials $\CMcal{L}_{\alpha}(t)$ defined in (\ref{E:defL}) are quasisymmetric in $X_n=(x_1,\ldots,x_n)$, as proved in \cite[Proposition 7.1]{HLMvW11}. {For each $N$, the family $\{\L_{\alpha}(t)\,\vert\, \alpha\vDash N,\, \ell(\alpha)\le n\}$ forms a basis of the homogeneous quasisymmetric polynomials of degree $N$ in $X_n$.} 	
		The {\em \color{blue} quasisymmetric Schur function} is defined by $S_{\alpha}=S_{\alpha}(X_n)=\CMcal{L}_{\alpha}(0)$. For any weak composition $\eta\in \mathbb{N}^n$ such that $\eta^+=\beta$, Haglund, Luoto, Mason, and van Willigenburg
		\cite[Theorem~5.1, Proposition~3.4 and Remark~3.5]{HLMvW10} proved that
		\begin{align}\label{E:QSxS}
			\langle \CMcal{S}_{\beta}\rangle\CMcal{S}_{\alpha} s_{\lambda}=\sum_{\substack{\gamma^+=\alpha\\ \gamma\le \eta}}a_{\gamma\lambda}^{\eta},
		\end{align}
		which counts the Littlewood-Richardson skew composition (LRC) tableaux of shape $\beta/\alpha$ with filling type $\lambda^r$. These tableaux are defined as follows.
		\begin{definition}[{\em \color{blue} Littlewood-Richardson skew composition (LRC) tableaux}]
			Let $\sigma_1$ and $\sigma_2$ be LRS tableaux (in Definition \ref{Def:LRS}) of shapes
			$\eta/\gamma$ and $\rho/\tau$, respectively.
			We say that $\sigma_1$ and $\sigma_2$ are {\color{blue} equivalent} if
			\begin{enumerate}
				\item\(\sigma_1\) and \(\sigma_2\) have the same set of ordinary (non-$*$) entries in each column.
				\item \(\rho^+ = \eta^+\).
			\end{enumerate}
			A {\color{blue} Littlewood-Richardson skew composition (LRC) tableau}
			is an equivalence class of LRS tableaux.
			For an LRS filling $\sigma$ of shape $\eta/\gamma$, let $[\sigma]$ denote the equivalence class of $\sigma$, where the shape of $[\sigma]$ is defined to be $\eta^+/\gamma^+$, and the filling type of $[\sigma]$ is that of $\sigma$, which is independent of the representative.
		\end{definition}

		We now establish Corollary \ref{cor1} and (\ref{eq:keycoeff}) of Corollary \ref{cor21}.
		
		{\em Proof of Corollary \ref{cor1}}.
		Theorem ~\ref{T:main} implies that
		\begin{align*}
			\CMcal{L}_{\alpha}(t)P_{\lambda}(t)
			=\sum_{\gamma^+=\alpha}\E_{\gamma}(t)P_{\lambda}(t)
			=\sum_{\gamma^+=\alpha}\sum_{\eta\ge \gamma}  a_{\gamma\lambda}^{\eta}(t)\E_{\eta}(t)
			=\sum_{\eta} \biggl(\sum_{\substack{\gamma^+=\alpha,\\ \gamma\le\eta}} a_{\gamma\lambda}^{\eta}(t)\biggr)\E_{\eta}(t).
		\end{align*}
		On the other hand, since $\CMcal{L}_{\alpha}(t)$ is quasisymmetric \cite{HLMvW11} and $P_{\lambda}(t)$ is symmetric, their
		product is quasisymmetric and has a unique expansion in the $\CMcal{L}$-basis. Again by (\ref{E:defL}), we obtain
		\begin{align*}
			\CMcal{L}_{\alpha}(t)P_{\lambda}(t)&=\sum_{\beta}b_{\alpha\lambda}^{\beta}(t)\CMcal{L}_{\beta}(t)
			=\sum_{\beta}b_{\alpha\lambda}^{\beta}(t)  \sum_{\eta^+=\beta}\E_{\eta}(t)
			=\sum_{\eta}    b_{\alpha\lambda}^{\eta^+}(t)  \E_{\eta}(t).
		\end{align*}
		Since the set $\{\E_{\eta}(t)\,\vert\,\eta\in\mathbb{N}^n\}$ is a basis of $\mathbb{Q}(t)[x_1,x_2,\ldots,x_n]$, comparing coefficients in these two expansions gives (\ref{E:b2a}). Moreover, since $\L_{\alpha}(0)=\S_{\alpha}$ and $P_{\lambda}(0)=s_{\lambda}$, together with (\ref{E:QSxS}), we conclude $b_{\alpha\lambda}^{\beta}(0)=\langle \S_{\beta}\rangle \S_{\alpha}s_{\lambda}$.
		
		\qed
		
		{\em Proof of (\ref{eq:keycoeff}) of Corollary \ref{cor21}}.

		Recall that $w_0=n\,n-1\,\ldots\,1$ denotes the longest permutation of $[n]$. Since
		$\ell(w_0w)+\ell(w^{-1})=\ell(w^r)+\ell(w)=\ell(w_0)=\ell(w_0w w^{-1})$, we find $\Pi_{w_0}=\Pi_{w_0w}\Pi_{w^{-1}}$. On the other hand, the quadratic Hecke relation (defined in subsection \ref{ss:2.1}) implies $\Pi_i=tT_i^{-1}$, which implies
		\begin{align*}
			\Pi_{w^{-1}}=\Pi_{i_{l}}\cdots \Pi_{i_1}=t^l T_{i_{l}}^{-1}\cdots T_{i_{1}}^{-1}=t^l T_{w}^{-1}
		\end{align*}
		for any reduced expression $w=s_{i_1}\cdots s_{i_{l}}$ with $l=\ell(w)$. Equivalently,  $\Pi_{w^{-1}}T_w=t^{\ell(w)}$. 
		{Let $\mu=\lambda(\gamma)$, and let $w$ be the unique permutation of minimal length satisfying $w\mu=\gamma$.
			Then $(w_0ww_0)\mu^r=(w_0w)\mu=\gamma^r$ and $\ell(w)=\inv(\gamma^r)$.}
		Consequently, it follows from Definition \ref{Def:F} that
		\begin{equation}\label{eq:transfer}
			\Pi_{w_0}\E_\gamma(t)
			=\Pi_{w_0w}\Pi_{w^{-1}}T_wx^\mu
			=t^{\inv(\gamma^r)}\Pi_{w_0w}x^\mu
			=t^{\inv(\gamma^r)}K_{\gamma^r}(t).
		\end{equation}
		For every polynomial $f$ such that $s_i f=f$, and for every polynomial $g$, we have $\Pi_i(fg)=f\Pi_i(g)$ by definition. Thus, multiplication by $P_\lambda$ commutes with $\Pi_{w_0}$. Combining \eqref{eq:transfer} and Theorem~\ref{T:main} yields
		\begin{align*}
			t^{\inv(\gamma)}K_\gamma(t) P_\lambda(t)
			=\Pi_{w_0}\bigl(\E_{\gamma^r}(t)P_\lambda(t)\bigr)
			=\sum_{\eta\ge \gamma} a_{\gamma^r\lambda}^{\eta^r}(t)\,
			\Pi_{w_0}\E_{\eta^r}(t)
			=\sum_{\eta\ge \gamma} t^{\inv(\eta)}
			a_{\gamma^r\lambda}^{\eta^r}(t)K_\eta(t),
		\end{align*}
		as desired.
		\qed

		It remains to verify that specializing Corollary~\ref{cor21} at $t=0$ gives (\ref{E:sk1}). For this purpose, we introduce the Littlewood--Richardson skew key (LRK) tableaux from \cite[Section 6]{HLMvW10}. In what follows, we use an {\em \color{blue} increasing basement}: Basement $*$-symbols in the same row remain equal and $\ast$ is larger than every positive integer, while those in a higher row
		are considered smaller than those in a lower row.
		\begin{definition}[{\em \color{blue}Littlewood--Richardson skew key (LRK) tableaux}]Under the convention of an increasing basement, a skyline filling $\sigma$ is called a {\color{blue} Littlewood--Richardson skew key (LRK) tableau}
			if $\operatorname{col}(\sigma)$ is a regular contre-lattice word and every type $A$ and type $B$ triple is an inversion triple.
			
		\end{definition}
		
		{\em Proof of (\ref{E:sk1}) by (\ref{eq:keycoeff})}. We prove it by the same bijection $\psi$ as defined in \eqref{eq:set}, using the increasing large basement
		specified above.
		\begin{align*}
			\psi:\quad
			&\bigl\{
			T\in\SYT(\lambda';\eta,\gamma)\,\vert\,
			\inv^*(T)+h-m+\inv(\eta^r)-\inv(\gamma^r)=0
			\bigr\}\\
			&\quad
			\rightarrow
			\bigl\{
			\text{LRK tableaux of shape $\dg(\eta/\gamma)$
				with filling type $\lambda^r$}
			\bigr\},
		\end{align*}
		where $h=h(\gamma,\eta)$ and $m=m(\gamma,\eta)$. Write $\sigma=\psi(T)$. It has already been shown that $\sigma$ has filling type $\lambda^r$,
		that $\operatorname{col}(\sigma)$ is a regular contre-lattice word,
		and that its entries weakly decrease along each row. We only need to prove that
		$\inv^*(T)+h-m+\inv(\eta^r)-\inv(\gamma^r)=0$ forces
		every type $A$ and type $B$ triple in $\sigma$ to be an inversion triple.
		
		Fix $i<j$. Let $\coinv_{ij}(\sigma)$ denote the number of
		coinversion triples involving rows $i$ and $j$ of $\sigma$. Set $r=\min(\eta_i,\eta_j)$ and write
		$p_k=\sigma(i,k)$ and $q_k=\sigma(j,k)$.
		Define
		\[
		h_{ij}(\sigma)
		=
		\#\left\{
		k\,\vert\,
		\gamma_i<k\leq\eta_i,\,
		\gamma_j\leq k\leq\eta_j,\,
		q_{k+1}<p_k<q_k
		\right\},
		\]
		with the convention $q_{\eta_j+1}=\sigma(j,\eta_j+1)=0$.
		By \eqref{eq:zero-stat}, we have $\sum_{i<j}h_{ij}(\sigma)=\inv^*(T)+h-m$. We claim that the proof is complete once we show that
		\begin{equation}\label{eq:final}
			\coinv_{ij}(\sigma)
			=
			h_{ij}(\sigma)
			+\chi(\eta_i<\eta_j)-\chi(\gamma_i<\gamma_j).
		\end{equation}
		Indeed, summing \eqref{eq:final} over all $i<j$ gives
		\[
		\sum_{i<j}\coinv_{ij}(\sigma)
		=
		\inv^*(T)+h-m+\inv(\eta^r)-\inv(\gamma^r).
		\]
		Thus, for every $T\in\SYT(\lambda';\eta,\gamma)$, the expression
		on the right is the total number of coinversion triples in $\sigma$.
		In particular, it is nonnegative, and its vanishing implies that
		every type $A$ and type $B$ triple is an inversion triple.
		
		To prove (\ref{eq:final}), we assert that
		\begin{equation}\label{eq:claim}
			\sum_{0\le k\le r}\chi(q_{k+1}<p_k<q_k)
			=
			h_{ij}(\sigma)+\chi(\gamma_i\geq\gamma_j).
		\end{equation}
		We distinguish the cases where $p_k=*$ and where $p_k\ne *$. If $p_k\ne *$, then the contribution to the left-hand side of (\ref{eq:claim}) is precisely $h_{ij}(\sigma)$. Otherwise $p_k=\ast$; then necessarily $q_k=\ast$ and $q_{k+1}$ is an ordinary entry or the boundary value $0$.
		This occurs only when $k=\gamma_j\leq\gamma_i$,
		giving the additional contribution $\chi(\gamma_i\geq\gamma_j)$.
		
		We now  prove \eqref{eq:final}. First suppose that $\eta_i<\eta_j$. The triples involving rows $i$ and $j$ are of type $B$.
		By \eqref{eq:claim},
		\begin{align*}
			\coinv_{ij}(\sigma)=\sum_{k=0}^{r} \chi(q_{k+1}< p_k < q_k  )
			&=h_{ij}(\sigma) +\chi(\gamma_i\ge\gamma_j)
			=h_{ij}(\sigma) +1-\chi(\gamma_i<\gamma_j).
		\end{align*}
		This agrees with \eqref{eq:final} when $\chi(\eta_i<\eta_j)=1$. Now suppose that $\eta_i\geq\eta_j$, so that $r=\eta_j$. The triples involving rows $i$ and $j$ are of type $A$.
		By the non-attacking condition and the basement convention, we have $q_k\neq p_{k-1}$ for $1\leq k\leq r$.
		Using $p_k\leq p_{k-1}$ and $q_k\leq q_{k-1}$, and considering separately the cases $q_k<p_{k-1}$
		and $q_k>p_{k-1}$, we find
		\begin{align}\label{E:diff}
			\chi(p_k<q_k<p_{k-1})
			-\chi(q_k<p_{k-1}<q_{k-1})
			= \chi(p_k<q_k)-\chi(p_{k-1}<q_{k-1}).
		\end{align}
		Since the basement is strictly increasing from top to bottom, we have $p_0<q_0$. Moreover, $q_{r+1}=0<p_r$, so $\chi(q_{r+1}<p_r<q_r)=\chi(p_r<q_r)$. Therefore,
		\begin{align*}
			\coinv_{ij}(\sigma)&=\sum_{1\le k\le r}\chi(p_{k}< q_k < p_{k-1})\quad \mbox{ (by definition of a type $A$ triple)} \\
			&=\sum_{1\le k\le r}\chi(q_{k}< p_{k-1} < q_{k-1}  ) + \sum_{1\le k\le r}\left( \chi(p_{k}< q_k  )  -  \chi(p_{k-1}< q_{k-1} ) \right)\quad \mbox{ (by (\ref{E:diff}))}\\
			&= \sum_{0\le k<r}\chi(q_{k+1}< p_{k} < q_{k}  ) +\chi(p_r<q_r)-\chi(p_0<q_0) \\
			&=\sum_{0\le k<r}\chi(q_{k+1}< p_{k} < q_{k}  ) +\chi(q_{r+1}< p_r<q_r)-1 \quad \mbox{(by $p_0<q_0$ and $q_{r+1}<p_r$)}\\
			&=\sum_{0\le k\le r}\chi(q_{k+1}< p_{k} < q_{k}  )-1=h_{ij}(\sigma) +\chi(\gamma_i\ge\gamma_j)-1 \quad\mbox{ (by (\ref{eq:claim}))}.
		\end{align*}
		This proves \eqref{eq:final} when $\chi(\eta_i<\eta_j)=0$, thus completing the proof.
		
		\qed



		
		\section{Final remarks}\label{S:6}
		One would hope to introduce a parameter $q$ into Theorem \ref{T:main}, that is, to expand the product of a non-symmetric Macdonald polynomial and a symmetric Macdonald polynomial in terms of the non-symmetric ones and find a combinatorial formula for the coefficients of this expansion. Yip combined the combinatorics of alcove walks and double affine Hecke algebras to derive such a formula \cite[Theorem 4.2]{Yip}. However, it is unclear how that formula relates to Theorem \ref{T:main}. Moreover, with the additional parameter $q$, note that the coefficients are not in $\mathbb{N}[t,1-t,q,1-q]$ (that is, polynomials with nonnegative integer coefficients in $t$, $1-t$, $q$ and $1-q$).

		\section*{Acknowledgements}
		The second and third authors were supported by the National Natural Science Foundation of China (NSFC) under Project No. 12201529, and are currently supported by the NSFC under Project No. 12571358.

	\end{document}